\documentclass[]{article}

\usepackage{amsmath,amsthm,amssymb,amstext,graphicx}
\usepackage{subfigure}
\usepackage{fancyvrb}
\usepackage{enumerate}
\usepackage{a4wide}

\newtheorem{theorem}{Theorem}[section]
\newtheorem{corollary}[theorem]{Corollary}
\newtheorem{lemma}[theorem]{Lemma}
\newtheorem{proposition}[theorem]{Proposition}

\theoremstyle{definition}
\newtheorem{algorithm}{Algorithm}
\theoremstyle{definition}
\newtheorem{definition}[theorem]{Definition}
\newtheorem{remark}[theorem]{Remark}

\newcommand{\ep}{\varepsilon}

\newcommand{\R}{\mathbb{R}}

\newcommand{\N}{\mathbb{N}}

\renewcommand{\P}{\mathcal{P}}

\newcommand{\T}{\mathcal{T}}
\newcommand{\A}{\mathcal{A}}
\newcommand{\D}{\mathcal{D}}

\newcommand{\dvg}{\mathop{\mathrm{div}}}
\newcommand{\intd}{\ \mathrm{d}}

\title{Estimating long term behavior of flows without trajectory integration: the infinitesimal generator approach}


\author{Gary Froyland\thanks{School of Mathematics and Statistics, University of New South Wales, Sydney, NSW 2052, Australia.}
        \and Oliver Junge\thanks{Faculty of Mathematics, Technische Universit\"at M\"unchen, 85748 Garching, Germany.}
        \and P{\'e}ter Koltai\thanks{Faculty of Mathematics, Technische Universit\"at M\"unchen, 85748 Garching, Germany. P\'eter Koltai was partly supported by the TopMath PhD program within the Elite Network of Bavaria and the TUM Graduate School.}
        }

\begin{document}

\maketitle

\begin{abstract}
The long-term distributions of trajectories of a flow are described by invariant densities, i.e.\ fixed points of an associated transfer operator.  In addition, global slowly mixing structures, such as almost-invariant sets, which partition phase space into regions that are almost dynamically disconnected, can also be identified by certain eigenfunctions of this operator.  Indeed, these structures are often hard to obtain by brute-force trajectory-based analyses.  In a wide variety of applications, transfer operators have proven to be very efficient tools for an analysis of the global behavior of a dynamical system.  

The computationally most expensive step in the construction of an approximate transfer operator is the numerical integration of many short term trajectories.  In this paper, we propose to directly work with the \emph{infinitesimal generator} instead of the operator, completely avoiding trajectory integration.  We propose two different discretization schemes; a cell based discretization and a spectral collocation approach.  Convergence can be shown in certain circumstances.  We demonstrate numerically that our approach is much more efficient than the operator approach, sometimes by several orders of magnitude.
\end{abstract}


\section{Introduction}
Analysis of the long-term behavior of flows can be broadly classified into geometric methods and statistical methods.
Geometrical methods include the determination of fixed points, periodic orbits, and invariant manifolds.  Invariant manifolds of fixed points or periodic orbits act as barriers to transport as  trajectories may not cross the manifolds transversally. Statistical methods include determining the distribution of points in very long trajectories of a very large set of initial points (i.e.\ a physical invariant measure \cite{Yo02a}, often possessing an invariant density) and the identification of meta-stable or almost-invariant sets \cite{DeJu99a,FrDe03,Fr05}.  Almost-invariant sets partition the phase space into almost dynamically disconnected regions and are important for revealing global dynamical structures that are often invisible to an analysis of trajectories. These metastable dynamics also go under the names of persistent patterns, or strange eigenmodes, both of which are realisable as eigenfunctions of a transfer operator. Frequently, the boundaries of maximally almost-invariant sets (those sets which are locally closest to invariant sets) coincide with certain invariant manifolds \cite{FrPa09a}.

Our focus in the present paper is on statistical methods, although we demonstrate via case studies the relationships to geometrical methods.
The most commonly used tool for statistical methods is the transfer operator (or Perron-Frobenius operator).
Fixed points of the transfer operator correspond to invariant densities, while eigenfunctions corresponding to real positive eigenvalues strictly less than one provide information on almost-invariant sets.
In practice, one typically constructs a finite-rank numerical approximation of a transfer operator and computes large spectral values and eigenfunctions for this finite-rank operator.
The construction of the finite-rank approximation requires the integration of many relatively short trajectories with initial points sampled over the domain of the flow.
It is this use of {\em short} trajectories that gives the transfer operator approach additional stability and accuracy when compared with computations based upon very long trajectories.
Long trajectories continually accumulate small errors from imperfect numerical integration and finite computer representation of numbers;  these small errors quickly grow in chaotic flows.
While the transfer operator approach is very stable, it still requires the computation of many small trajectories which can be very time consuming in some systems.
The approach we describe in the present work obviates the need for any trajectory integration at all and works directly with the vector field.

Our approach exploits the fact that the evolution of probability densities $u=u(t)$ can be described by generalized solutions of the abstract Cauchy problem ${\partial_t u = \A u}$. The Perron--Frobenius operator is the evolution operator of this equation, and has the same eigenfunctions as the operator~$\A$. The operator $\A$ is an unbounded hyperbolic (if the underlying dynamics is deterministic) or elliptic (if the deterministic dynamics is perturbed by white noise) partial differential operator. Standard techniques allow us to approximate the eigenmodes we are interested in: finite difference, finite volume, finite element and spectral methods yield such discretizations; see~\cite{QuVa94,Kron97,Lev02,Boyd01,CaHuQu91a} and the references therein.

An outline of the paper is as follows.
In Section 2 we provide background on the infinitesimal operator arising from smooth vector fields and describe conditions under which the operator generates a semigroup of transfer operators. In order to obtain formal results, we will require the addition of a small amount of diffusion to the deterministic flow. In the small diffusion setting we discuss existence of invariant densities and spectral results for the associated infinitesimal operator.
Section 3 describes a spectral Galerkin method for the approximation of the infinitesimal operator.  We apply results from the numerical analysis of advection-diffusion PDEs to show that our Galerkin method approximates the true eigenfunctions of the infinitesimal operator as our Galerkin basis becomes increasingly refined.  We can also show that the convergence rate is spectral; that is, faster than any polynomial.
Section 4 describes an Ulam-based Galerkin method for approximating the infinitesimal operator. This Ulam-based approach is new and shares some similarities with finite-difference schemes. While we cannot show convergence of this approximation scheme, the numerical results obtained are extremely fast and accurate.
Sections 5--8 detail the practical application of our two approximation methods to flows in one-, two-, and three- dimensional domains.  We demonstrate the spectral accuracy of our spectral Galerkin method and compare with the accuracy of the Ulam-based Galerkin method.
We also compare the accuracy vs.\ computational effort of our two new approaches with standard transfer operator approaches. The natural relationships between the outputs of our statistical methods and geometric objects such as the vector field and invariant manifolds are also elucidated in each case study.
We conclude in Section 9.



\section{Dynamics, densities and semigroups}

Let the domain $M\subset\R^d$ of our flow be a smooth  compact
manifold and $m$ the (normalized) Lebesgue measure on $M$. Denote
by $F:M\to\R^d$ the vector field generating the flow and by $\Phi^t:M\to M$, $t\in\R$, the flow, i.e.\ $\Phi^t(x)$ represents the location of a trajectory beginning at $x\in M$ after flowing for $t\in\R$ time units. One has that $d\Phi^t(x)/dt=F(x)$. Note that -- provided that the components $F_i, i = 1,\ldots,d$ have continuous derivatives -- the function $\Phi^t:M\to M$ is a diffeomorphism for every $t\in\R$.

Invariant sets are structures of dynamical interest.  A set $A\subset M$
is called \emph{invariant}, iff $\Phi^{-t}A = A$ for all $t$.  
Also, one asks how the flow changes probability measures. Sample
$x$ according to a probability measure $\mu$; the distribution of
$\Phi^tx$ is then given by $\mu\circ\Phi^{-t}$. Special attention is
to be drawn to \textit{invariant measures}, which do not change
under the dynamics ($\mu = \mu\circ\Phi^{-t}$). Invariant measures
$\mu$ are called \textit{ergodic} if invariant sets have either zero
or full measure, i.e.\ if $A\subset M$ satisfies $\Phi^{-t}A = A$
then $\mu(A)\in\{0,1\}$.
An even more restricted class of ergodic measures are the \textit{physically relevant} (or \textit{natural}) ones, satisfying
\[ \int\psi\intd\mu = \lim_{T\to\infty}\frac{1}{T}\int_0^T\psi\left(\Phi^tx\right)\intd t \]
for all $\psi:M\to\R$ continuous and $x\in U\subset M$ with
$m(U)>0$. One has that absolutely continuous\footnote{$\mu$ is
absolutely continuous (with respect to $m$, if not specified
further), if there is an $0\leq f\in L^1(m)$ such that for all
$m$-measurable $B\subset M$ we have $\mu(B)=\int_B f\intd m$. The
function $f$ is called the density of $\mu$.} ergodic measures are
natural invariant measures. The density of an invariant measure is
called the \textit{invariant density}.

\subsection{Transfer operators}
\label{subsec.transfer}

When looking for invariant densities, one can rephrase the action of
the flow on measures as an action on densities. If $f$ denotes the
density of $\mu$, then $f$ is evolved by the flow as
\[
\P^tf\ (x) = f\left(\Phi^{-t}x\right)|D\Phi^{-t}x|,
\]
where $|B|$ denotes $|\det B|$ for a matrix $B$. The linear operator
$\P^t:L^1(M)\circlearrowleft$ is known as a \textit{transfer
operator} or the \textit{Perron-Frobenius operator} associated with
the flow $\Phi$. Note that invariant densities are fixed points of
$\P^t$.
For each $t\in\R$ we have
\begin{enumerate}
 \item[(i)] $\P^t$ is linear,
 \item[(ii)] $\P^tf\geq 0$ if $f\geq0$ and,
 \item[(iii)] $\|\P^tf\|=\|f\|$ for all $f\geq0$, where $\|\cdot\|$ is
 the $L^1$ norm.
\end{enumerate}
i.e.\ $\P^t$ is a \emph{Markov operator}. Moreover, these properties
also imply $\|\P^t\|\leq 1$, hence $\P^t$ is a contraction and its
eigenvalues lie inside the unit disc. In fact, since the flow $\Phi^t$ is a
bijection, we have $\|\P^tf\|=\|f\|$ for all $f\in L^1$. To see
this, note that
\begin{equation}\label{eq:bijection}
\int_A\P^tf \; dm = \int_{\Phi^{-t}A}f\; dm
\end{equation}
for $t>0$ and $A\subset M$ measurable. Now write $f=f^+-f^-$, with
$f^\pm(x) = \max\{\pm f(x),0\}$, then $\|f\|=\int (f^+ - f^-) = \int_{S^+} f^+ + \int_{S^-} f^-$, where $S^\pm$ is the support of $f^\pm$. Now use properties (ii), (iii) and (\ref{eq:bijection}) with $A=S^\pm$.  A consequence of this is that the eigenvalues of $\P^t$ lie on the unit circle.

\subsection{Operator semigroups, generators}

The transfer operator also inherits some (semi)group properties of the flow $\Phi^t$.
\begin{definition}
 Let $(X,\|\cdot\|)$ be a Banach space. A one parameter family $\left\{\T^t\right\}_{t\geq0}$ of bounded linear operators $\T^t:X\to X$ is called a \emph{semigroup} on $X$, if
 \begin{itemize}
     \item[(a)] $\T^0=I$ ($I$ denoting the identity on $X$),
     \item[(b)] $\T^{t+s} = \T^t\T^s$ for all $t,s\geq 0$.
 \end{itemize}
 Further, if $\|\T^t\|\leq1$, the family is called a semigroup of contractions. 
 
 If
$\lim_{t\to 0}\|\T^tf- f\|=0$ for every $f\in X$, then $\T^t$ is a \emph{continuous semigroup} (\emph{$C_0$ semigroup}).
\end{definition}
The transfer operator $\P^t$ is a $C_0$ semigroup of
contractions on $L^1$, see \cite{LaMa94a} for a proof (in particular
Remark 7.6.2 for the continuity).
\begin{definition}
For a semigroup $\T^t$ we define the operator $\A:\D(\A)\to X$ by
\[
\A f = \lim_{t\to 0}\frac{\T^tf-f}{t},\qquad f\in\D(\A),
\]
with $\D(\A)\subset X$ being the linear subspace of $X$ where the
above limit exists. The operator $\A$ is called the
\emph{infinitesimal generator} of the semigroup.
\end{definition}
For $\P^t$, the infinitesimal generator  turns out to be (provided the $F_i$ are continuously differentiable)
\begin{equation}\label{eq:generator}
\A_{PF} f = -\dvg (fF),
\end{equation}
see \cite{LaMa94a}. The following result (see eg.\ Theorem 2.2.4
\cite{Pa83a}) shows the connection between the eigenvalues of the
semigroup operators and their infinitesimal generator:
\begin{theorem}[Spectral mapping theorem]
 Let $\T^t$ be a $C_0$ semigroup and let $\A$ be its infinitesimal generator. Then
 \[ e^{t\sigma(\A)} \subset \sigma\left(\T^t\right) \subset e^{t\sigma(\A)}\cup \{0\}, \]
 where $\sigma(\cdot)$ denotes the point spectrum of the operator. The corresponding eigenvectors are identical.
\end{theorem}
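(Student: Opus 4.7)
The plan is to establish the two inclusions separately. The forward inclusion $e^{t\sigma(\A)}\subset\sigma(\T^t)$, with identical eigenvectors, is the easier half; it essentially says that the abstract Cauchy problem generated by $\A$ admits the expected exponential solutions. The converse inclusion $\sigma(\T^t)\setminus\{0\}\subset e^{t\sigma(\A)}$ is more delicate because $\mu\in\sigma(\T^t)\setminus\{0\}$ admits infinitely many logarithms and nothing a priori singles out a member of $\sigma(\A)$.

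For the forward direction I suppose $f\in\D(\A)\setminus\{0\}$ with $\A f=\lambda f$. I use the standard $C_0$-semigroup facts that $\T^s f\in\D(\A)$ for every $s\geq 0$, that $\A$ commutes with $\T^s$ on $\D(\A)$, and that $\frac{d}{ds}\T^s f=\A\T^s f$. Setting $h(s):=e^{-\lambda s}\T^s f$, a direct computation gives $h'(s)=e^{-\lambda s}\T^s(\A f-\lambda f)=0$, so $h(t)=h(0)=f$; equivalently $\T^t f=e^{\lambda t}f$, with $f$ itself serving as the eigenvector.

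For the reverse direction I suppose $\T^t g=\mu g$ with $\mu\neq 0$ and $g\neq 0$. Fix any $\lambda_0$ with $e^{\lambda_0 t}=\mu$ and set $\lambda_k:=\lambda_0+2\pi ik/t$, so that $e^{\lambda_k t}=\mu$ for every $k\in\Z$. Introduce the Fourier-type coefficients
\[
f_k:=\frac{1}{t}\int_0^t e^{-\lambda_k s}\T^s g\intd s.
\]
A change-of-variables argument in this integral, splitting the range at $s=t-h$ and invoking $\T^t g=e^{\lambda_k t}g$ on the overhanging piece, yields $\T^h f_k=e^{\lambda_k h}f_k$ for every $h\geq 0$. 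Dividing by $h$ and letting $h\downarrow 0$ then gives $f_k\in\D(\A)$ with $\A f_k=\lambda_k f_k$, so whenever $f_k\neq 0$ one obtains $\mu=e^{\lambda_k t}\in e^{t\sigma(\A)}$. To guarantee some $f_k$ is nonzero I note that if all $f_k$ vanished, then the continuous $X$-valued curve $s\mapsto e^{-\lambda_0 s}\T^s g$ would have all its Fourier coefficients equal to zero; testing against an arbitrary continuous linear functional on $X$ and using scalar Fourier uniqueness (plus Hahn--Banach to lift back to $X$) would force this curve to vanish identically, so in particular $g=0$, a contradiction.

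The main obstacle is the reverse inclusion, where the logarithm $\lambda_0$ is only determined modulo $2\pi i/t$ and no single candidate is automatically in $\sigma(\A)$. The Fourier-coefficient construction circumvents this by producing, for each $k$, either zero or a genuine generator eigenvector, and guarantees at least one nonzero choice via the non-vanishing of $g$. Note that the identification of eigenvectors is therefore slightly weaker here than in the forward direction: the generator eigenvector $f_k$ is a particular Fourier projection of the orbit $s\mapsto\T^s g$ over $[0,t]$ lying in the closed linear span of that orbit, rather than $g$ itself.
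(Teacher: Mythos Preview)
Your proof is correct and is essentially the classical argument found in Pazy, Theorem~2.2.4, which is exactly the reference the paper cites in lieu of a proof; the paper does not supply its own argument for this statement. Your forward direction via the auxiliary function $h(s)=e^{-\lambda s}\T^s f$ and your reverse direction via the Fourier-type projections $f_k=\tfrac{1}{t}\int_0^t e^{-\lambda_k s}\T^s g\,ds$, together with the Hahn--Banach/uniqueness step to force some $f_k\neq 0$, are precisely the ingredients of that standard proof. Your closing remark that, in the reverse direction, the generator eigenvector is a Fourier projection of the orbit rather than $g$ itself is a useful clarification of the somewhat loose phrase ``the corresponding eigenvectors are identical'' in the statement.
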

This has important consequences for invariant densities:
\begin{corollary}
 The function $f$ is a invariant density of $\P^t$ for all $t\ge 0$ if and only if $\A_{PF}f=0$.
\end{corollary}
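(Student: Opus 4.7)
The plan is to prove both implications separately; the corollary is essentially a specialization of the spectral mapping theorem to the eigenvalue pair $(\lambda, e^{t\lambda}) = (0,1)$, combined with a trivial check at the level of difference quotients.

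For the direction $\A_{PF}f = 0 \Rightarrow \P^t f = f$ for all $t\ge 0$, I would apply the spectral mapping theorem directly. The hypothesis says $0 \in \sigma(\A_{PF})$ with eigenvector $f$. The inclusion $e^{t\sigma(\A_{PF})} \subset \sigma(\P^t)$, together with the assertion that the corresponding eigenvectors are identical, then yields $e^{t\cdot 0} = 1 \in \sigma(\P^t)$ with the same eigenvector $f$, i.e.\ $\P^t f = f$ for every $t \ge 0$.

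For the converse $\P^t f = f \text{ for all } t\ge 0 \Rightarrow \A_{PF}f = 0$, I would work directly from the definition of the generator rather than routing through the spectral mapping theorem, since the definitional approach simultaneously delivers $f \in \D(\A_{PF})$ at no extra cost. If $\P^t f = f$ for every $t \ge 0$, then the difference quotient $(\P^t f - f)/t$ is identically zero for all $t > 0$, so its $L^1$-limit as $t \to 0^+$ trivially exists and equals zero. By the very definition of $\A_{PF}$, this forces $f \in \D(\A_{PF})$ and $\A_{PF}f = 0$.

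I do not expect any real obstacle; the corollary is a one-line consequence of the preceding theorem. The only mildly delicate point, which arises only if one insists on deriving the converse via spectral mapping, is that one must then exclude eigenvalues $\lambda \in \sigma(\A_{PF})$ with $\lambda \neq 0$ but $e^{t\lambda} = 1$ for \emph{every} $t \ge 0$ --- a requirement which immediately forces $\lambda = 0$ by taking two incommensurable values of $t$. The direct difference-quotient argument above sidesteps this subtlety entirely and is therefore the cleanest route.
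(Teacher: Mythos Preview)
Your proof is correct. The paper does not actually supply a proof of this corollary; it is stated immediately after the spectral mapping theorem with the sentence ``This has important consequences for invariant densities,'' and left as an evident consequence. Your argument --- invoking the spectral mapping theorem for the direction $\A_{PF}f=0 \Rightarrow \P^t f = f$ and reading off the converse directly from the definition of the generator via the vanishing difference quotient --- is exactly the intended elaboration, and your remark about avoiding the (nonexistent) ambiguity $e^{t\lambda}=1$ for all $t$ by going through the definition is a nice touch.
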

According to the discussion at the end of Section~\ref{subsec.transfer}, we have
\begin{corollary}
 The eigenvalues of $\A_{PF}$ lie on the imaginary axis.
\end{corollary}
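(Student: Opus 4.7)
The plan is to invoke the Spectral Mapping Theorem stated immediately above, combined with the fact (established at the end of Section~\ref{subsec.transfer}) that every eigenvalue of $\P^t$ lies on the unit circle. This reduces the claim to a short calculation in the complex exponential.

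First I would take any eigenvalue $\lambda \in \sigma(\A_{PF})$ with eigenvector $f \in \D(\A_{PF})$, so $\A_{PF} f = \lambda f$. By the Spectral Mapping Theorem, for every $t \ge 0$ the value $e^{t\lambda}$ belongs to $\sigma(\P^t)$ (and is the eigenvalue corresponding to the same eigenvector $f$). Since $\Phi^t$ is a bijection, the argument using~\eqref{eq:bijection} shows $\|\P^t f\| = \|f\|$ for all $f\in L^1$, which in turn forces every element of $\sigma(\P^t)$ to lie on the unit circle. Hence $|e^{t\lambda}| = 1$ for every $t \ge 0$, i.e.\ $e^{t\,\mathrm{Re}\,\lambda} = 1$, and differentiating at $t=0$ (or simply taking logarithms) yields $\mathrm{Re}\,\lambda = 0$. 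Therefore $\lambda$ lies on the imaginary axis.

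There is essentially no hard step here: the work has already been done in the preceding subsection (establishing that $\P^t$ is a $C_0$ semigroup of isometries on $L^1$ because the flow is a volume-measurable bijection) and in the Spectral Mapping Theorem quoted just before. The only minor subtlety worth noting is that the Spectral Mapping Theorem as stated covers only the point spectrum, which is exactly what the statement of the corollary addresses, so no functional-analytic extension to the full spectrum is needed. Thus the proof is essentially a one-line application of the two previous results.
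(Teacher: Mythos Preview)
Your proof is correct and matches the paper's approach exactly: the paper simply refers back to the discussion at the end of Section~\ref{subsec.transfer} (eigenvalues of $\P^t$ on the unit circle) together with the Spectral Mapping Theorem, and you have spelled out precisely that argument.
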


\section{The infinitesimal generator, almost-invariant sets, and escape rates}

In this section we discuss almost-invariant sets and escape rates via a spectral analysis of the infinitesimal generator.
As remarked near the end of Section \ref{subsec.transfer}, the $L^1$ spectrum of $\mathcal{P}^t$ lies on the unit circle, and lacks a spectral gap.
Applying the spectral mapping theorem, we see that the spectrum of $\mathcal{A}$ must be pure imaginary.
In the following discussion, to prove formal results, we will add a small random perturbation to $\mathcal{P}^t$.
Later, in the numerics section, we will see that our numerical methods introduce a numerical diffusion that plays the role of creating a spectral gap.

\subsection{Stochastic perturbations} \label{ssec:stoch_pert}


In many real world situations, a deterministic model of some physical system is not appropriate. Rather, one should account for the fact that certain external perturbations are present which might be unknown or for which a detailed model would be overly complicated.  Often, it is appropriate to account for these influences by incorporating a \emph{small random perturbation} into the description.  From a theoretical point of view, this even facilitates the analysis of the system: Under certain assumptions, the transfer operator becomes compact.

We therefore now leave the deterministic setting and assume that our dynamical
system described by the ordinary differential equation $dx/dt =
F(x)$ is slightly stochastically perturbed by \textit{white
noise}. An
exact mathematical treatment of this topic would require tools which
are beyond the scope of this work (for an exact derivation see
\cite{LaMa94a}, chapter 11). We are primarily interested in the time
evolution of probability densities. The following material
highlights the relevant formal statements and attempts to point out
the intuition behind them.  Instead of an ordinary differential
equation, we now deal with a stochastic differential equation (SDE)
\begin{equation}\label{eq:SDE}
\frac{dX}{dt} = F(X) + \ep \frac{dW}{dt},
\end{equation}
with $\ep>0$ and $W$ being a $d$-dimensional Wiener process. The
solutions are time-dependent random variables $X(t)$ with values in
$\mathbb{R}^d$. Just as in the deterministic case, we look at the
evolution of density functions;  now the density functions represent
the distribution of random outcomes of the variables $X(t)$. The
density functions $f$ satisfy
\[ \text{Prob}(X(t)\in A) = \int_A f(x,t)\intd x. \]
Assuming the existence of such a density $f(x,t)$, where
$f(\cdot,0)=f_0$ is given, we may again define the transfer operator
as
$\P_\ep^tf_0 := f(t,\cdot)$.
If the vector field $F$ is smooth enough we have following characterization of the density function, the \emph{Fokker-Planck} or \emph{Kolmogorov forward equation}, cf.\ \cite{LaMa94a}, 11.6:
\begin{equation}
 \frac{\partial f}{\partial t} = \frac{\ep^2}{2}\Delta f - \dvg (fF) =: \A_\ep f.
 \label{eq:FPE}
\end{equation}

\begin{proposition}[\cite{Ama83, Lun95}]	\label{prop:AepsGenC0semigroups}
 The operator $\A_\ep$ (with Neumann boundary conditions\footnote{The \textit{dynamical} reason why Neumann boundary conditions are chosen here is discussed later.}) is the infinitesimal generator  of $C_0$ semigroups $\P_{\ep,1}^t$ on $L^1$, $\P_{\ep,2}^t$ on $L^2$, and $\P_{\ep,0}^t$ on $C^0$.
\end{proposition}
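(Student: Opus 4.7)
The plan is to verify that $\A_\ep$ fits into the standard framework of second-order elliptic generators on bounded smooth domains and then invoke the cited results of Amann and Lunardi directly. First I would rewrite the operator in non-divergence form: expanding the divergence yields
\[
 \A_\ep f \;=\; \tfrac{\ep^2}{2}\Delta f \,-\, F\cdot\nabla f \,-\, (\dvg F)\,f,
\]
which exhibits $\A_\ep$ as a linear second-order differential operator whose principal symbol is $-\tfrac{\ep^2}{2}|\xi|^2$. Since $\ep>0$ and $F$ is smooth, this operator is uniformly (in fact strongly) elliptic with smooth coefficients on the compact manifold $M$. I would also check that the boundary condition associated with the Fokker--Planck formulation, namely the vanishing of the probability flux $(\tfrac{\ep^2}{2}\nabla f - fF)\cdot n = 0$ on $\partial M$, reduces to a Neumann-type (or oblique/Robin) condition of the form covered in the cited works; provided the flow is tangential to $\partial M$ ($F\cdot n=0$), this is exactly the homogeneous Neumann condition.

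Next I would specify the domain in each of the three ambient spaces so that $\A_\ep$ becomes a closed, densely defined operator. For $X=L^p(M)$ with $p\in\{1,2\}$ a natural choice is
\[
 \D(\A_\ep) \;=\; \set{f\in W^{2,p}(M)\,:\,\partial_n f = 0 \text{ on }\partial M},
\]
and for $X=C^0(\overline M)$ one takes the maximal domain inside $C^0$ on which $\A_\ep$ (computed distributionally or via the interior regularity of the heat semigroup) lands in $C^0$ and the Neumann trace vanishes, i.e.\ the closure of smooth functions satisfying the boundary condition. Density of these domains in the respective ambient spaces is standard (smooth functions with vanishing normal derivative are dense in $L^p$ and in $C^0$ by mollification and cut-off near $\partial M$).

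With these domains in place, the heart of the argument is to quote the cited results. Amann's paper \cite{Ama83} establishes that such uniformly elliptic operators with smooth coefficients and (co-)normal boundary conditions on smooth bounded domains generate analytic semigroups on $L^p(M)$ for all $1<p<\infty$ and also on $L^1(M)$ (via duality/extrapolation); Lunardi \cite{Lun95} provides the corresponding generation result on $C^0$-type spaces together with the interpolation/trace machinery needed to handle the $C(\overline M)$ setting. Since analytic semigroups are in particular $C_0$, this yields the three semigroups $\P_{\ep,1}^t$, $\P_{\ep,2}^t$, $\P_{\ep,0}^t$ claimed in the proposition.

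The main obstacle I anticipate is the $L^1$ case and the $C^0$ case: on reflexive $L^p$ spaces ($1<p<\infty$) the generation theory is classical Hille--Yosida applied to the sectorial estimates for second-order elliptic operators, but on $L^1$ the resolvent estimates are more delicate and one must use the Amann extrapolation/duality trick (or a direct argument based on the maximum principle and the positivity of the semigroup) to obtain the $C_0$ property. Similarly, on $C^0(\overline M)$ one must argue that the resolvent $(\lambda-\A_\ep)^{-1}$ maps $C^0$ to itself and satisfies the Hille--Yosida bounds, which relies on Schauder estimates up to the boundary and the compatibility of the Neumann condition with $C^0$. Once these two extremal cases are handled via the cited works, the $L^2$ case is immediate.
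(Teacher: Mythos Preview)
Your proposal is correct and matches the paper's approach: the paper gives no proof at all for this proposition, simply citing \cite{Ama83,Lun95} as the source of the result. What you have written is a reasonable elaboration of why those references apply---rewriting $\A_\ep$ in non-divergence form to exhibit uniform ellipticity, specifying appropriate domains, and noting the extra care needed for the non-reflexive endpoints $L^1$ and $C^0$---and goes well beyond what the paper itself supplies.
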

We note that the operator $\P_{\ep,1}^t$ is identical to the transfer
operator $\P_{\ep}^t$.


\paragraph{Invariant densities}  Again, we are particularly interested in distributions (densities) which do not change under the evolution.  Those are again given by fixed points of $\P_{\ep,1}^t$ or alternatively, by functions in the null space of $\A_\ep$.  One can show \cite{E.88c} that $\P_{\ep,1}^t$ is compact and thus the null space of $\A_\ep$ is finite dimensional. Furthermore, due to the white noise, the support of the stochastic transition function associated to $\P_{\ep,1}^t$ is unbounded, and the null space of $\A_\ep$ is one-dimensional, i.e.\ there is a \emph{unique} invariant density (see again \cite{E.88c}, Theorem 1), characterizing the long term dynamical behavior of (\ref{eq:SDE}).

\subsection{Almost-invariant sets}
\label{subsec:almost}

We call a set $A\subset M$ \emph{almost-invariant} with respect to a (not necessarily invariant) probability measure $\nu$ (cf. \cite{FrDe03,Fr05}), if
\begin{equation}
 \rho^t_{\nu}(A) := \frac{\nu\left(\Phi^{-t}(A)\cap A\right)}{\nu(A)} \approx 1
     \label{eq:AI}
\end{equation}
for modest times $t$.  The analogous expression for $\Phi$ perturbed by a small random perturbation is
\begin{equation}
 \rho^t_{\nu,\ep}(A) := \frac{\text{Prob}_\nu(X(0)\in A,X(t)\in A)}{\text{Prob}_\nu(X(0)\in A)} \approx 1
     \label{eq:AIeps}
\end{equation}
for modest times $t$.

We can alternatively characterize this property of a set using an infinitesimal representation of $\mathcal{P}_\ep^t$.
To this end let $f\in L^1$ and consider a measurable set $A\subset M$.  We define the functional $\mathcal{A}_{\ep,A}:L^1\supset\D_A(\A_\ep)\to\R$ by
\begin{equation}\label{eq:flow_rate_gen}
\A_{\ep,A}f:= \lim_{t\to 0} \int_A \frac{\mathcal{P}_\ep^tf - f}{t}\; dm,\qquad f\in\D_A(\A_\ep),
\end{equation}
where $\D_A(\A_\ep)$ is the linear subspace of $L^1$ where the above limit exists. Let $f_A := f\chi_A/(\int f\chi_A dm)$.

\begin{proposition}\label{prop:rate}
Let $0\le f\in \D_A(\A_{\ep})$ be the density of the probability measure $\nu$. Then
\begin{equation}
\rho^t_{\nu,\ep}(A) = 1 + (\A_{\ep,A}f_A)t + o(t)
\end{equation}
for measurable $A$ as $t\to 0$.\footnote{For two functions $f,g:\R\to\R$ we say ``${f(t)=o(g(t))}$ as ${t\to0}$'', if $\lim_{t\to0}\tfrac{f(t)}{g(t)}=0$.}
\end{proposition}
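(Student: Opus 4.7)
The plan is to reduce $\rho^t_{\nu,\ep}(A)$ to an integral involving $\P_\ep^t f_A$ and then read off the identity from the definition of the functional $\A_{\ep,A}$. The first step is to rewrite the joint probability in the numerator of (\ref{eq:AIeps}): since $X(0)$ has density $f$, conditioning on $\{X(0)\in A\}$ amounts to starting with the unnormalized sub-density $f\chi_A$, and since $\P_\ep^t$ is by construction the evolution operator of densities under the SDE, the density of this sub-population at time $t$ is $\P_\ep^t(f\chi_A)$. Hence $\text{Prob}_\nu(X(0)\in A,\,X(t)\in A) = \int_A \P_\ep^t(f\chi_A)\,dm$. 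Dividing by $\text{Prob}_\nu(X(0)\in A) = \nu(A)$ and using linearity in the form $\P_\ep^t(f\chi_A) = \nu(A)\,\P_\ep^t f_A$, the definition (\ref{eq:AIeps}) collapses to
\[
\rho^t_{\nu,\ep}(A) = \int_A \P_\ep^t f_A\,dm.
\]

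Second, since $f_A$ is supported on $A$ and has total mass one, we have $\int_A f_A\,dm = 1 = \rho^0_{\nu,\ep}(A)$, so subtracting and dividing by $t$ yields
\[
\frac{\rho^t_{\nu,\ep}(A) - 1}{t} = \int_A \frac{\P_\ep^t f_A - f_A}{t}\,dm.
\]
Passing to the limit $t\to 0$ and invoking the defining formula (\ref{eq:flow_rate_gen}), the right-hand side tends to $\A_{\ep,A}f_A$, which gives the asserted expansion
\[
\rho^t_{\nu,\ep}(A) = 1 + (\A_{\ep,A}f_A)\,t + o(t)
\]
as $t\to 0$.

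The main technical concern is verifying that $f_A$ actually lies in the domain $\D_A(\A_\ep)$ on which the limit defining $\A_{\ep,A}$ is guaranteed to exist; the hypothesis supplies only $f\in\D_A(\A_\ep)$, and passing from $f$ to $f\chi_A$ (which in general has a jump across $\partial A$) will typically require either additional regularity of $f$ and $\partial A$, or an implicit extension of the domain, with the Neumann boundary conditions from Proposition \ref{prop:AepsGenC0semigroups} likely playing a role. Up to this domain subtlety, the argument is precisely the short calculation above.
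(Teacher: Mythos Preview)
Your argument is essentially identical to the paper's: both rewrite the joint probability as $\int_A \P_\ep^t(f\chi_A)\,dm$, normalize by $\nu(A)$ to obtain $\rho^t_{\nu,\ep}(A)=\int_A \P_\ep^t f_A\,dm$, subtract $1=\int_A f_A\,dm$, divide by $t$, and invoke the definition of $\A_{\ep,A}$. The domain issue you flag---that the hypothesis places $f$ rather than $f_A$ in $\D_A(\A_\ep)$---is real and is not addressed in the paper's proof either; it is simply left implicit.
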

\begin{proof}
We have
\begin{align*}
\frac{\rho^t_{\nu,\ep}(A)-1}{t} & = \frac{\text{Prob}_\nu(X(0)\in A,X(t)\in A)-\nu(A)}{t\, \text{Prob}_\nu(X(0)\in A)} \\
& = \frac{\int_{A} \P^t_{\ep}(f\chi_A)\, dm - \int_A f\, dm}{t\,\nu(A)}  = \int_{A} \frac{\P^t_{\ep}f_A - f_A}{t}\, dm,
\end{align*}
where the second equation follows from the fact that $X(0)$ is distributed according to $\nu$ (with density $f$), and $X(0)\in A$; which gives the density $f\chi_A$ for $X(0)$, see Section~\ref{ssec:stoch_pert}. Thus for $t\to 0$ the claim follows.
\end{proof}

Correspondingly, a set $A$ will be almost invariant with respect to the measure with density $f\in L^1$, if
$\A_{\ep,A}f_A\approx 0$.

There is a strong connection between almost-invariant sets and the spectrum of the generator.
The following theorem illustrates this with a simple heuristic for producing almost-invariant sets.
\begin{theorem}\label{thm:ai}
Suppose that the generator $\A_\ep$ possesses a real eigenvalue $\lambda < 0$ with corresponding eigenfunction $f$. Then $\int f \ dm= 0$. Define $A^+ = \{f\ge0\}$, $A^- = \{f<0\}$. Let $f_{A^+},f_{A^-}\in\D_A(\A)$.
Then
\[
\A_{\ep,A^+}|f_{A^+}|+\A_{\ep,A^-}|f_{A^-}|= \lambda.
\]
\end{theorem}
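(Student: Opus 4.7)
The plan is to convert the eigenvalue equation $\A_\ep f = \lambda f$ into the semigroup statement $\P^t_\ep f = e^{\lambda t} f$ via Proposition~\ref{prop:AepsGenC0semigroups} and the spectral mapping theorem, and then exploit the symmetric decomposition $f^{\pm} = \tfrac{1}{2}(|f|\pm f)$ so that the two pieces $\P^t_\ep f$ and $\P^t_\ep|f|$ play complementary roles. First I would establish $\int f\,dm = 0$: mass conservation of the Markov semigroup $\P^t_\ep$ (equivalently, $\int_M \A_\ep g\,dm = 0$ for $g$ in the domain, via the divergence theorem with the Neumann boundary conditions) gives $\int f\,dm = \int \P^t_\ep f\,dm = e^{\lambda t}\int f\,dm$, and since $\lambda<0$, $e^{\lambda t}\ne 1$ forces the integral to vanish.

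Next I would set up notation. Writing $f = f^+ - f^-$ with $f^\pm = \max\{\pm f,0\}\ge 0$, let $c := \int_{A^+} f\,dm = -\int_{A^-} f\,dm > 0$ (equality by Step~1). Then $f_{A^+} = f\chi_{A^+}/c = f^+/c$ and $f_{A^-} = f\chi_{A^-}/(-c) = f^-/c$, both of which are already non-negative, so $|f_{A^\pm}| = f^\pm/c$. The key algebraic trick is the decomposition
\[
f^\pm = \tfrac{1}{2}\bigl(|f|\pm f\bigr),\qquad\text{hence}\qquad \P^t_\ep f^\pm = \tfrac{1}{2}\bigl(\P^t_\ep|f| \pm e^{\lambda t}f\bigr),
\]
by linearity of $\P^t_\ep$ and the eigenvalue relation.

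The computation then proceeds as follows. Using $\int_{A^+} f\,dm = c$, $\int_{A^+}|f|\,dm = c$, and the analogous identities on $A^-$, integrating the decomposition above gives, after cancellation,
\[
\int_{A^+}\bigl(\P^t_\ep f^+ - f^+\bigr)\,dm + \int_{A^-}\bigl(\P^t_\ep f^- - f^-\bigr)\,dm = \tfrac{1}{2}\int_M\bigl(\P^t_\ep|f| - |f|\bigr)\,dm + c\bigl(e^{\lambda t}-1\bigr).
\]
The first term on the right vanishes identically because $|f|\ge 0$ and $\P^t_\ep$ preserves integrals of non-negative functions (property (iii) of the Markov semigroup). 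Dividing by $tc$, letting $t\to 0$, and using linearity of the functional $\A_{\ep,A^\pm}$ together with the assumed domain membership $f_{A^\pm}\in\D_{A^\pm}(\A_\ep)$ to justify the limit, one obtains
\[
\A_{\ep,A^+}|f_{A^+}| + \A_{\ep,A^-}|f_{A^-}| = \lim_{t\to 0}\frac{e^{\lambda t}-1}{t} = \lambda.
\]

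There is no deep obstacle here; the proof is essentially bookkeeping once one spots the decomposition $f^\pm=\tfrac{1}{2}(|f|\pm f)$. The main subtleties are (i) justifying the mass-conservation step for $\A_\ep$ with the prescribed Neumann boundary conditions, which is precisely what those boundary conditions are designed to give, and (ii) making sure the limits in $t$ can be taken inside the spatial integrals — which is handled by the hypothesis $f_{A^\pm}\in\D_{A^\pm}(\A_\ep)$ and linearity of $\A_{\ep,A^\pm}$ (so that $\A_{\ep,A^\pm}(f^\pm/c) = c^{-1}\A_{\ep,A^\pm}f^\pm$ and the limit on the right-hand side splits as desired).
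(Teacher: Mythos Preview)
Your proof is correct and is in fact more self-contained than the paper's. The paper's argument quotes an external result (Proposition~5.7 of Dellnitz--Junge) which asserts directly that
\[
\rho^t_{\nu,\ep}(A^+)+\rho^t_{\nu,\ep}(A^-)=e^{t\lambda}+1
\]
for the probability measure $\nu$ with density $|f|$, and then feeds this into Proposition~\ref{prop:rate} to pass to the infinitesimal form; taking $t\to 0$ finishes. Your argument instead reproves what is needed from that external proposition by hand, via the splitting $f^\pm=\tfrac{1}{2}(|f|\pm f)$ and the mass conservation $\int_M\P^t_\ep|f|=\int_M|f|$, and then performs the $t\to 0$ limit directly rather than routing through Proposition~\ref{prop:rate}. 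The underlying mechanism is the same (positivity and mass preservation of the Markov semigroup applied to $|f|$, combined with the eigenrelation applied to $f$), but your version has the advantage of being elementary and not depending on an outside reference. The paper's version has the advantage of brevity once the cited proposition is taken for granted, and it makes the connection to the almost-invariance ratio $\rho^t_{\nu,\ep}$ explicit, which is conceptually relevant for the section.
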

\begin{proof}
By Proposition~5.7 in \cite{DeJu99a} and the spectral mapping theorem we have that
\[
\rho^t_{\nu,\ep}(A^+)+\rho^t_{\nu,\ep}(A^-) = \exp(t\lambda) + 1,
\]
where $\nu$ is the probability measure with density $|f|$.  Using Proposition~\ref{prop:rate} we obtain
$1+\A_{\ep,A^+}|f_{A^+}|t+\A_{\ep,A^-}|f_{A^-}|t = \exp(t\lambda) + o(t)$, i.e.
\[
\A_{\ep,A^+}|f_{A^+}|+\A_{\ep,A^-}|f_{A^-}| = \frac{\exp(t\lambda)-1}{t}+o(1)
\]
and for $t\to 0$ we obtain the claim.
\end{proof}

For $\lambda \approx 0$ Theorem~\ref{thm:ai} yields $\A_{\ep,A^\pm}|f_{A^\pm}|\approx 0$, which means that the sets $A^+$ and $A^-$ will be almost-invariant with respect to the probability measure with density $|f|$.
Other techniques for extracting sets $A^+, A^-$ from the eigenfunction $f$ may be found in \cite{FrDe03,Fr05}.
The papers \cite{FrDe03,Fr05} also discuss almost-invariance with respect to physical \emph{invariant} probability measures, a property that is particularly meaningful when studying typical dynamical behavior.
In all cases, the basis for these methods are eigenfunctions of $\A_\ep$ corresponding to (real) eigenvalues close to $0$.

If $\A_\ep$ has a complex eigenvalue with real part close to zero, then the corresponding complex eigenfunction may also be used to construct almost-invariant sets.
\begin{lemma}		\label{lem:realpart}
  Let $\A_\ep f = \lambda_A f$ with $\lambda_A\in \mathbb{C}$ and let $f_{re}$ and $f_{im}$ denote the real and imaginary part of $f$, respectively. Let $t>0$ be such that $e^{t\lambda_A}=\lambda_{PF}\in\R$. Then $\P_\ep^tf_{re} = \lambda_{PF} f_{re}$ and $\P_\ep^tf_{im} = \lambda_{PF} f_{im}$.
\end{lemma}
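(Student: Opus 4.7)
The plan is to combine the spectral mapping theorem (as stated in the excerpt) with the fact that $\P_\ep^t$ is a real linear operator, so it commutes with taking real and imaginary parts.

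First I would recall that the spectral mapping theorem given earlier in the paper says that if $\A_\ep f = \lambda_A f$, then $f$ is an eigenfunction of the semigroup with eigenvalue $e^{t\lambda_A}$, i.e.
\[
\P_\ep^t f \;=\; e^{t\lambda_A} f \;=\; \lambda_{PF}\, f.
\]
Here we are implicitly extending $\P_\ep^t$ and $\A_\ep$ linearly to complex-valued functions $f = f_{re} + i f_{im}$ in the natural way (the complexification of the real semigroup/generator); this is the standard setting for discussing complex eigenvalues.

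Next I would separate real and imaginary parts. Because the coefficients of $\A_\ep = \tfrac{\ep^2}{2}\Delta - \dvg(\cdot\, F)$ are real, the complexified semigroup $\P_\ep^t$ maps real functions to real functions. Consequently, for any complex $g = g_{re}+ig_{im}\in L^p_{\mathbb{C}}$,
\[
\P_\ep^t g \;=\; \P_\ep^t g_{re} + i\, \P_\ep^t g_{im},
\]
with both summands real. Apply this to $g = f$ and compare with $\P_\ep^t f = \lambda_{PF}(f_{re} + i f_{im})$, where now $\lambda_{PF}\in\R$ by hypothesis. Matching real and imaginary parts yields $\P_\ep^t f_{re} = \lambda_{PF}\, f_{re}$ and $\P_\ep^t f_{im} = \lambda_{PF}\, f_{im}$, which is the claim.

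The only mild subtlety, and the step I would be most careful about, is the justification that $\P_\ep^t$ really does commute with taking real and imaginary parts — i.e. that the complexified semigroup is obtained from the real one coordinatewise. This follows because $\A_\ep$ has real coefficients (so its complexification respects conjugation), hence the generated semigroup does too; equivalently, $\overline{\P_\ep^t f} = \P_\ep^t \overline{f}$, from which the decomposition into real and imaginary parts is immediate. After this observation, the rest is just the spectral mapping theorem and linearity, with no further calculation needed.
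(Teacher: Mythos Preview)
Your proof is correct and follows essentially the same route as the paper's own proof: invoke the spectral mapping theorem to obtain $\P_\ep^t f = \lambda_{PF} f$, use that $\P_\ep^t$ is a real operator so that $\P_\ep^t f = \P_\ep^t f_{re} + i\,\P_\ep^t f_{im}$ with both summands real, and then equate real and imaginary parts using $\lambda_{PF}\in\R$. Your added remarks on complexification and the conjugation property make explicit what the paper leaves implicit, but the argument is the same.
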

\begin{proof}
  From the proof of Theorem~2.2.4 \cite{Pa83a} we have $\P_\ep^tf = \lambda_{PF} f$. Note, that $\P_\ep^t:L^1(M,\R)\circlearrowleft$. By linearity we have $\P_\ep^tf = \P_\ep^tf_{re} + i\ \P_\ep^tf_{im}$. Thus,
  \[
  \underbrace{\lambda_{PF} f_{re}}_{\in\R} + i\ \underbrace{\lambda_{PF} f_{im}}_{\in\R} = \underbrace{\P_\ep^tf_{re}}_{\in\R} + i\ \underbrace{\P_\ep^tf_{im}}_{\in\R}.
  \]
  The claim follows immediately.
\end{proof}

Hence, if for a $t>0$ we have $1\approx e^{t\lambda_A}\in\R$, then the real (and imaginary) part of the corresponding eigenfunction yields a decomposition of the phase space into almost invariant sets in the sense of Theorem~\ref{thm:ai}.


\subsection{Escape rates}

We have seen in the previous section that there is a connection between eigenvalues of $\mathcal{A}_\ep$ close to zero and almost-invariant sets.
There is also a strong connection between the \emph{escape rate} of the sets $A^+$, $A^-$ and the corresponding eigenvalue of $\mathcal{A}_\ep$.

In the deterministic setting ($\ep=0$) the upper (resp.\ lower) escape rate of a set $A$ under the time-$t$ map of the flow $\Phi^t$ is defined as:
\begin{eqnarray}
\label{eq:upperescape}
\bar{E}(A)&=&-\liminf_{k\to\infty} \frac{1}{k}\log m(A\cap \Phi^{-t}A\cap\cdots\cap \Phi^{-kt}A) \\
\label{eq:lowerescape}
\underbar{E}(A)&=&-\limsup_{k\to\infty} \frac{1}{k}\log m(A\cap \Phi^{-t}A\cap\cdots\cap \Phi^{-kt}A)
\end{eqnarray}
If both limits are equal, we call $E(A)=\lim_{k\to\infty} \frac{1}{k}\log m(A\cap \Phi^{-t}A\cap\cdots\cap \Phi^{-kt}A)$ the escape rate from $A$.
The escape rate is the asymptotic exponential rate of loss of $m$-mass from the set $A$.
One might expect that almost-invariant sets have low escape rate and vice-versa, but simple counterexamples in \cite{FrSt10} show that this is not the case.
This is because the notion of almost-invariance as defined above is a finite-time property, while escape rate is an asymptotic quantity.

The $\ep>0$ version of $(\ref{eq:upperescape})$ is
\begin{eqnarray}
\label{eq:upperescapeep}
\bar{E}_\ep(A)&:=&-\liminf_{k\to\infty} \frac{1}{k}\log \text{Prob}_m(X(0)\in A, X(t)\in A,\ldots,X(kt)\in A)\\
&=&-\liminf_{k\to\infty} \frac{1}{k}\log \int_A (\P^t_{\ep,A})^{k}\left(1\right)\, dm. 
\end{eqnarray}
where $\P_{\ep,A}^t(f):=\P_\ep^t(f\chi_A)$ is the standard restriction of the operator $\P_\ep^t$ to $A$~\cite{pianigiani1979expanding}.
The following theorem provides a recipe for constructing sets with low escape rate from eigenfunctions of $\A_\ep$ with real eigenvalues.

\begin{theorem}
\label{thm:er}
Suppose that $\A_\ep f=\lambda f$ for some $\lambda<0$ and $f\in L^\infty(m)$. Then
\[
\bar{E}_\ep(A^+)\le -t\lambda \mbox{ and }\bar{E}_\ep(A^-)\le -t\lambda,
\]
where $A^+=\{f\ge 0\}$ and $A^-=\{f<0\}$.
\end{theorem}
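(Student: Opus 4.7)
My plan is to use the spectral mapping theorem to convert the infinitesimal eigenequation $\A_\ep f = \lambda f$ into the semigroup eigenequation $\P_\ep^t f = e^{t\lambda}f$, then extract an iteratable pointwise lower bound from positivity of $\P_\ep^t$, and finally compare the resulting estimate with $\int_{A^+}(\P_{\ep,A^+}^t)^k(1)\,dm$, which is the quantity defining $\bar{E}_\ep(A^+)$.

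The key one-step estimate comes from the positivity of $\P_\ep^t$. Splitting $f = f^+ - f^-$ and using linearity, the eigenequation reads $\P_\ep^t f^+ - \P_\ep^t f^- = e^{t\lambda}(f^+ - f^-)$. Since $\P_\ep^t f^- \ge 0$, we get $\P_\ep^t f^+ \ge e^{t\lambda}(f^+ - f^-)$ pointwise, and restricting to $A^+$ (where $f^- = 0$) gives
\[
\chi_{A^+}\,\P_\ep^t f^+ \ge e^{t\lambda}\,f^+.
\]

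To iterate, I would introduce the positive operator $Q(g) := \chi_{A^+}\P_\ep^t(g\chi_{A^+})$. Since $f^+$ is supported in $A^+$, the above reads $Q(f^+)\ge e^{t\lambda}f^+$, and monotonicity of $Q$ gives $Q^k(f^+)\ge e^{kt\lambda}f^+$ by induction. The operator $Q^k$ equals $\chi_{A^+}(\P_{\ep,A^+}^t)^k$ (the only difference between $Q$ and $\P_{\ep,A^+}^t$ is a leading $\chi_{A^+}$, which is absorbed by the integration domain), so integrating yields
\[
\int_{A^+}(\P_{\ep,A^+}^t)^k(f^+)\,dm \;\ge\; e^{kt\lambda}\,\|f^+\|_1.
\]
The $L^\infty$ hypothesis then lets me replace $f^+$ by $1$: from $f^+ \le \|f^+\|_\infty\chi_{A^+}$ together with the positivity of $\P_{\ep,A^+}^t$ and the identity $\P_{\ep,A^+}^t\chi_{A^+} = \P_{\ep,A^+}^t 1$, we have $(\P_{\ep,A^+}^t)^k(f^+) \le \|f^+\|_\infty\,(\P_{\ep,A^+}^t)^k(1)$, so
\[
\int_{A^+}(\P_{\ep,A^+}^t)^k(1)\,dm \;\ge\; \frac{\|f^+\|_1}{\|f^+\|_\infty}\,e^{kt\lambda}.
\]
Taking $-\tfrac{1}{k}\log$ and $\liminf_{k\to\infty}$ yields $\bar E_\ep(A^+)\le -t\lambda$; the bound on $A^-$ follows by applying the same argument to $-f$ (still an eigenfunction of $\A_\ep$ with eigenvalue $\lambda$).

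Two small points require attention. The constant $\|f^+\|_1$ must be strictly positive, which holds because $\int f\,dm = 0$: apply $\int\cdot\,dm$ to $\P_\ep^t f = e^{t\lambda}f$, use that $\P_\ep^t$ preserves integrals, and note $e^{t\lambda}\ne 1$ since $\lambda < 0$. The main subtlety I anticipate is bookkeeping of the $\chi_{A^+}$ factors: positivity naturally produces bounds for the ``two-sided'' operator $Q = \chi_{A^+}\P_\ep^t\chi_{A^+}$, whereas the escape rate is phrased in terms of the ``one-sided'' $\P_{\ep,A^+}^t = \P_\ep^t\chi_{A^+}$; the identity $Q^k = \chi_{A^+}(\P_{\ep,A^+}^t)^k$, which is then harmlessly integrated over $A^+$, is what reconciles the two formulations.
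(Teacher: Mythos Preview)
Your argument is correct and in fact more self-contained than the paper's. Both proofs arrive at the same intermediate estimate
\[
\int_{A^+}(\P_{\ep,A^+}^t)^k(f^+)\,dm \;\ge\; e^{kt\lambda}\|f^+\|_1,
\]
but by genuinely different routes. The paper works probabilistically: it writes $e^{\lambda kt}=\int_{A^+}\P_\ep^{kt}f\,dm=\text{Prob}_\nu(X(kt)\in A^+)$ for the signed measure $\nu$ with density $f$, decomposes this probability according to the last time $n$ at which the process visits $A^-$, and shows each such ``leakage'' term $p_n$ is non-positive because it carries the factor $-\text{Prob}_{\nu^-}(\cdots)$; what survives is precisely the ``always in $A^+$'' probability. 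You instead extract from positivity the pointwise one-step bound $\chi_{A^+}\P_\ep^t f^+\ge e^{t\lambda}f^+$ and iterate it via monotonicity of $Q=\chi_{A^+}\P_\ep^t(\cdot\,\chi_{A^+})$. Your approach is more elementary and avoids both the path-decomposition bookkeeping and the appeal to \cite{FrSt10} for the final passage from $f^+$ to $\mathbf{1}$ (which you handle directly via $f^+\le\|f^+\|_\infty\chi_{A^+}$, exactly where the $L^\infty$ hypothesis enters). The paper's probabilistic argument, on the other hand, makes the dynamical content more visible and connects naturally to the escape-rate literature. Your verification that $\|f^+\|_1>0$ via $\int f\,dm=0$ and the reconciliation $Q^k=\chi_{A^+}(\P_{\ep,A^+}^t)^k$ are both sound.
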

\begin{proof}
Let $f$ be scaled such that $\int|f|\,dm=2$, and define the (signed) measure $\nu$ as $\nu(A)=\int_A f\,dm$. Then $\nu^+ = \nu\big\vert_{A^+}$ and $\nu^-=-\nu\big\vert_{A^-}$ are both probability measures. For an event $\mathcal{E}$ define $\text{Prob}_{\nu}(\mathcal{E}) := \text{Prob}_{\nu^+}(\mathcal{E}) - \text{Prob}_{\nu^-}(\mathcal{E})$. Since $f$ is an eigenfunction of $\P_{\ep}^t$ with eigenvalue $e^{\lambda t}$, we have
\begin{eqnarray*}
e^{\lambda kt} = e^{\lambda kt}\int_{A^+} f\, dm & = & \int_{A^+}\P_{\ep}^{kt}f\, dm 
							  =  \text{Prob}_{\nu}\big(X(kt)\in A^+\big)\\
							 & = & \text{Prob}_{\nu}\big(X(\ell t)\in A^+, \ell=0,\ldots,k\big) \\
							 &   & + \sum_{n=0}^{k-1} \underbrace{\text{Prob}_{\nu}\big(X(nt)\in A^-, X(\ell t)\in A^+,\ell=n+1,\ldots,k\big)}_{=:p_n}.
\end{eqnarray*}
It follows for the summands $p_n$ in the above sum:
\begin{eqnarray*}
p_n & = & \text{Prob}_{\P_{\ep}^{nt}\nu}\big(X(0)\in A^-, X(\ell t)\in A^+,\ell=1,\ldots,k-n\big) \\
 & = & e^{\lambda nt}\text{Prob}_{\nu}\big(X(0)\in A^-, X(\ell t)\in A^+,\ell=1,\ldots,k-n\big) \\
 & = & -e^{\lambda nt}\text{Prob}_{\nu^-}\big(X(\ell t)\in A^+,\ell=1,\ldots,k-n\big),
\end{eqnarray*}
where in the first equation the action of the transfer operator $\P_{\ep}^t$ on the measure $\nu$ is defined through its action on the density $f$ of $\nu$. The second equation follows from $\nu$ being an eigenmeasure. Clearly, $p_n$ is non-positive, and thus
\[
\text{Prob}_{\nu}\big(X(\ell t)\in A^+, \ell=0,\ldots,k\big) \ge e^{\lambda kt}
\]
holds. The rest of the proof follows the lines of the one of Theorem~2.4 in~\cite{FrSt10}.
\end{proof}

The recipe of Theorem~\ref{thm:er} is in fact the same as in Theorem~\ref{thm:ai}.
The difference is the measure used:  in Theorem~\ref{thm:ai}, almost-invariance is computed with respect to the particular measure $\nu$ with density $|f|$ where $f$ is the eigenfunction in question, whereas in Theorem~\ref{thm:er} escape is computed with respect to Lebesgue measure.


\begin{remark}
A more natural notion of escape rates in the time-continuous case would be the following:
\[
\bar{E}_{\ep}(A) := -\liminf_{t\to\infty}\frac{1}{t}\text{Prob}_m\big(X(s)\in A,s\in [0,t)\big).
\]
However, for this definition it is more complicated to obtain similar results to the one in Theorem~\ref{thm:er}, and a fuller discussion will appear elsewhere. One can view~\eqref{eq:upperescapeep} with $t=1$ as the ``time sampled version'' of this continuous-time definition.
\end{remark}

\section{Numerical approximation}
\label{sec:methods}

Having seen that certain eigenpairs of the transfer operator (resp.\ infinitesimal generator) carry the information we are seeking, we describe in the sequel our proposed approximation of these eigenpairs. To this end, we define finite dimensional approximation spaces and consider the eigenvalue problem projected onto these spaces.  Throughout this section we assume that the underlying deterministic vector field $F$ is smooth.

\subsection{Ulam's method}
\label{sec:ulam}
We describe here the ``standard'' Ulam approach;  see the surveys \cite{Fr01,DeFrJu01a} for more details.
We partition $M$ into $d$-dimensional connected, positive volume subsets $\{B_1,\ldots,B_n\}$. Typically, each $B_i$ will be a hyperrectangle or simplex to simplify computations.  As an approximation space we consider the space
$\Delta_n=\mbox{sp}\{\chi_{B_1},\ldots,\chi_{B_n}\}$ of functions which are piecewise constant on the cells of the partition.  Let $\pi_n:L^1\to\Delta_n$,
$
\pi_n f = \sum_{i=1}^n \frac{1}{m(B_i)}\int_{B_i} f\; dm\; \chi_{B_i},
$
be the $L^2$-orthogonal projection onto $\Delta_n$.  We let $\P_n^t:\Delta_n\to\Delta_n$, $\P_{n}^t := \pi_n \mathcal{P}^t$,
be the approximate Frobenius-Perron operator.  Note that
$
\P_{n}^t \chi_{B_i}  = \pi_n\P^t\chi_{B_i}  = \sum_{j=1}^n \frac{1}{m(B_j)}\int_{B_j} \P^t \chi_{B_i}\; dm\; \chi_{B_j},
$
i.e.\ the matrix representation $P_{n}^t\in\R^{n\times n}$ of $\P_{n}^t$ with respect to the basis $\chi_{B_1},\ldots,\chi_{B_n}$ and multiplication on the left is
\begin{align*}
(P_{n}^t)_{ij} & = \frac{1}{m(B_j)}\int_{B_j} \P^t \chi_{B_i}\; dm =\frac{m(B_i\cap\Phi^{-t}B_j)}{m(B_j)}.
\end{align*}
This matrix is easily constructed numerically using eg.\ GAIO \cite{DeFrJu01a}.

In the stochastic setting, letting $\P_{\ep,n}^t:\Delta_n\to\Delta_n$, $\P_{\ep,n}^t := \pi_n \P^t_{\ep}$, one obtains
\begin{align*}
(P_{\ep,n}^t)_{ij} & = \frac{1}{m(B_j)}\int_{B_j} \P_\ep^t \chi_{B_i}\; dm
\end{align*}
as above.
In principle one can use Monte-Carlo integration to compute these entries, however, this is not particularly efficient.

%
%

\subsection{Ulam's method for the generator}

We partition $M$ as in the standard Ulam's method.  We will see that the numerical scheme itself introduces some diffusion and we therefore consider the deterministic generator $\A$ (i.e.\ $\ep=0$).

We wish to construct an operator
$\A_n:\Delta_n\to\Delta_n$ that is close in some sense to the operator $\A$. Motivated by Ulam's method, one would like to form $\pi_n\A$, which unfortunately does not exist, because $\Delta_n\nsubseteq\D(\A)$, cf.\ (\ref{eq:generator}).  Instead of differentiating w.r.t. time and then doing the projection, we swap the order of these operations. Let us build the Ulam approximation $\P_n^t$ first, which will \textit{not} be a semigroup any more, but for fixed $t$ it approximates $\P^t$. Taking the time derivative, our candidate approximate operator is
\begin{equation}
\A_nf:=\lim_{t\to 0}\left(\frac{\pi_n\P^t\pi_nf-\pi_nf}{t}\right).
\label{eq:AnUlam}
\end{equation}
The following lemma emphasizes the intuition behind this definition:
if $\P_n^t$ is a sufficiently good approximation (for small $t$) of
the Markov jump process generated by the dynamics on the sets $B_i$,
then $\A_n$ will be the generator of this process.  To be exact, the following is the case:
\begin{lemma}
\label{ulamformula1lemma}
The matrix representation of
${\mathcal{A}}_n:\Delta_n\circlearrowleft$ with respect to the basis $\chi_1,\ldots,\chi_n$ under multiplication on the left is
\begin{equation}\label{ulamformula}
({A}_n)_{ij}=\left\{
            \begin{array}{ll}
              \displaystyle\lim_{t\to 0}\frac{m(B_i\cap \Phi^{-t}B_j)}{t\cdot m(B_j)}, & \hbox{$i\neq j$;} \\
              \displaystyle\lim_{t\to 0}\frac{m(B_i\cap \Phi^{-t}B_i)-m(B_i)}{t\cdot m(B_i)}, & \hbox{otherwise.}
            \end{array}
          \right.
\end{equation}
\end{lemma}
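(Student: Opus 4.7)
The plan is to evaluate $\mathcal{A}_n$ directly on each basis element $\chi_{B_i}$ and read off the coordinates of the result in the basis $\chi_{B_1},\ldots,\chi_{B_n}$. Since $\Delta_n$ is finite-dimensional, convergence in $\Delta_n$ is coordinate-wise, so it suffices to compute the limit in (\ref{eq:AnUlam}) coefficient by coefficient.

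The first observation is that $\chi_{B_i}\in\Delta_n$, so $\pi_n\chi_{B_i}=\chi_{B_i}$ and
\[
\mathcal{A}_n\chi_{B_i}=\lim_{t\to 0}\frac{\pi_n\mathcal{P}^t\chi_{B_i}-\chi_{B_i}}{t}.
\]
Next, I would reuse the Ulam formula from Section~\ref{sec:ulam}, which gives
\[
\pi_n\mathcal{P}^t\chi_{B_i}=\sum_{j=1}^n \frac{m(B_i\cap\Phi^{-t}B_j)}{m(B_j)}\,\chi_{B_j}.
\]
Splitting off the $j=i$ term and subtracting $\chi_{B_i}$ yields
\[
\frac{\pi_n\mathcal{P}^t\chi_{B_i}-\chi_{B_i}}{t}=\frac{m(B_i\cap\Phi^{-t}B_i)-m(B_i)}{t\,m(B_i)}\,\chi_{B_i}+\sum_{j\neq i}\frac{m(B_i\cap\Phi^{-t}B_j)}{t\,m(B_j)}\,\chi_{B_j}.
\]
Passing to the limit $t\to 0$ coordinate-wise, the $j$-th coefficient of $\mathcal{A}_n\chi_{B_i}$ is precisely the right-hand side of (\ref{ulamformula}), which is the claim.

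The only point that requires justification beyond algebra is the existence of each coefficient-wise limit. This is where smoothness of the vector field $F$ enters: for $i\neq j$ the quantity $m(B_i\cap\Phi^{-t}B_j)$ is the Lebesgue mass initially in $B_i$ that is transported into $B_j$ after time $t$, which (for partitions into sufficiently regular cells) is asymptotic to a boundary flux integral across $\partial B_i\cap\partial B_j$ of size $O(t)$, and the diagonal limit is controlled either by the same flux argument on all outgoing boundaries of $B_i$ or more cleanly by the mass-conservation identity $\sum_j m(B_i\cap\Phi^{-t}B_j)=m(B_i)$ together with existence of the off-diagonal limits. This flux/mass-conservation step is the only nontrivial part of the argument; the rest is direct substitution into (\ref{eq:AnUlam}).
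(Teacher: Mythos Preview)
Your proof is correct and follows essentially the same route as the paper's: expand $\pi_n\mathcal{P}^t\chi_{B_i}$ in the basis $\chi_{B_1},\ldots,\chi_{B_n}$, subtract $\chi_{B_i}$, divide by $t$, and read off the coefficients. You streamline slightly by invoking the already-established identity $(P_n^t)_{ij}=m(B_i\cap\Phi^{-t}B_j)/m(B_j)$ from Section~\ref{sec:ulam} rather than re-deriving it, and you add a remark on the existence of the coefficient-wise limits via flux and mass conservation, a point the paper does not address in this proof but handles in the subsequent Lemma~\ref{ulamformula2lemma}.
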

\begin{proof}
We consider the action of $\P^t$ on
$\chi_{B_i}$.
\begin{eqnarray*}
\lim_{t\to 0}\pi_n\frac{\mathcal{P}^t\chi_{B_i}-\chi_{B_i}}{t}&=& \lim_{t\to 0}\sum_{j=1}^n \frac{1}{m(B_j)}\left(\int_{B_j}\frac{\mathcal{P}^t\chi_{B_i}-\chi_{B_i}}{t}\ dm\right)\chi_{B_j}\\
&=& \lim_{t\to 0}\sum_{j\neq i} \frac{1}{m(B_j)}\left(\int_{B_j}\frac{\P^t\chi_{B_i}}{t}\ dm\right)\chi_{B_j} \\
& & + \lim_{t\to 0}\frac{1}{m(B_i)}\left(\int_{B_i}\frac{\P^t\chi_{B_i}-\chi_{B_i}}{t}\ dm\right)\chi_{B_i}\\
&=& \lim_{t\to 0}\sum_{j\neq i} \frac{1}{m(B_j)}\left(\int_{\Phi^{-t}B_j}\frac{\chi_{B_i}}{t}\ dm\right)\chi_{B_j}\\&&\qquad + \lim_{t\to 0}\frac{1}{m(B_i)}\left(\int_{\Phi^{-t}B_i}\frac{\chi_{B_i}}{t}\ dm-\int_{B_i}\frac{\chi_{B_i}}{t}\ dm\right)\chi_{B_i}\\
&=& \sum_{j\neq i} \lim_{t\to 0}\frac{m({B_i\cap \Phi^{-t}(B_j)})}{t\cdot m(B_j)}\chi_{B_j} + \lim_{t\to 0}\frac{{m({B_i\cap \Phi^{-t}B_i})}-m(B_i)}{t\cdot m(B_i)}\chi_{B_i}\\
\end{eqnarray*}
Thus under left multiplication we obtain (\ref{ulamformula}).
\end{proof}
\begin{remark}
 Lemma \ref{ulamformula1lemma} states that $(A_n)_{ij}$ is the \textit{outflow rate of mass} from $B_i$ into $B_j$.
\end{remark}

Let $\mathcal{R}_n^t:=\exp\left(t\A_n\right) = I-\pi_n+\exp(t\A_n\mid_{V_n})\pi_n$ denote the semigroup generated by $\A_n$. Then $\mathcal{R}_n^t$ and $\P^t$ are near to each other in the following sense, cf.\ \cite{Ko10a}:
\begin{proposition}	\label{prop:RnearQP}
 As $t\to 0$
 \begin{equation}
 \mathcal{R}_n^tf - \pi_n\P^tf = \mathcal{O}(t^2)
 \label{eq:RnearQP}
 \end{equation}
 for all $f\in \Delta_n$.\footnote{For two functions $f,g:\R\to\R$ we say ``${f(t)=\mathcal{O}(g(t))}$ as ${t\to0}$'', if $\limsup_{t\to0}\tfrac{|f(x)|}{|g(x)|}<\infty$.}
\end{proposition}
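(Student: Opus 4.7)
Since both $\pi_n\P^t$ and $\mathcal{R}_n^t$ map $\Delta_n$ into itself and $\Delta_n$ is finite dimensional, it suffices to compare the matrix representations $P_n^t$ and $R_n^t := \exp(tA_n)$ of the two operators in the basis $\chi_{B_1},\ldots,\chi_{B_n}$. The plan is to show that each is $C^2$ in $t$ near $0$ with matching zeroth- and first-order Taylor coefficients; Taylor's theorem with remainder then forces $P_n^t - R_n^t = \mathcal{O}(t^2)$ entrywise, from which the claim follows by applying the matrix on the left to the coefficient vector of any $f=\sum c_i\chi_{B_i}\in\Delta_n$.

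First I would record that $R_n^t$ is entire in $t$, so
\[
R_n^t - (I + tA_n) \;=\; \sum_{k\ge 2}\frac{t^k}{k!}A_n^k \;=\; \mathcal{O}(t^2)
\]
with constant controlled by $\|A_n\|^2 e^{t_0\|A_n\|}$ on any compact interval $[0,t_0]$. By Lemma~\ref{ulamformula1lemma}, $P_n^0 = I$ and $\tfrac{d}{dt}P_n^t\big\vert_{t=0} = A_n$, so the first two Taylor coefficients of $P_n^t$ match those of $R_n^t$. It remains to control the quadratic remainder of $P_n^t$.

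Second, examine the entries $g_{ij}(t) := (P_n^t)_{ij} = m(B_i\cap\Phi^{-t}B_j)/m(B_j) = m(B_j)^{-1}\int_{B_i}(\chi_{B_j}\circ\Phi^t)\intd m$. Under the standing smoothness of $F$ (so $\Phi^t$ is a $C^k$-diffeomorphism for $k\ge 2$) together with piecewise smoothness of the cell boundaries, Reynolds' transport theorem applied to the moving domain $B_i\cap\Phi^{-t}B_j$ expresses $g_{ij}'(t)$ as a flux integral of $F\cdot\nu$ over the relevant moving boundary piece. A second differentiation, relying only on smoothness of $F$ and of $\partial B_i$, $\partial B_j$, yields a uniform bound $|g_{ij}''(t)|\le C$ for $t\in[0,t_0]$ and over the finite index set. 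Taylor's theorem with remainder then gives $g_{ij}(t) = \delta_{ij} + t(A_n)_{ij} + \mathcal{O}(t^2)$ entrywise, and combining with the expansion of $R_n^t$ concludes the argument.

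The main obstacle is the $C^2$-regularity step for $g_{ij}$ near $t=0$. For rectangular (product) partitions this reduces to smoothness of one-dimensional slice integrals and is straightforward. For simplicial partitions one must apply Reynolds' formula in the presence of lower-dimensional corner and edge sets where several cell faces meet; such singular sets have codimension at least two and their contribution to $g_{ij}''$ is of lower order, so they do not affect the order of the remainder, but a careful treatment (e.g.\ by approximation with smoothed indicator functions) is needed to justify passing derivatives under the integral sign.
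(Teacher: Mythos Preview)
The paper does not supply a proof of this proposition; it only cites Koltai's thesis \cite{Ko10a}. Your approach is correct and is the natural one: by construction (equation~\eqref{eq:AnUlam} together with Lemma~\ref{ulamformula1lemma}) the matrices $P_n^t$ and $R_n^t=\exp(tA_n)$ share the same value and first derivative at $t=0$, and the exponential has an $\mathcal{O}(t^2)$ remainder trivially, so the whole content of the proposition is the quadratic remainder bound for $P_n^t$. Reducing this to a bound on $g_{ij}''$ via the transport theorem is the right move.

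Two remarks that may simplify the step you flag as the main obstacle. First, for the one-sided estimate (all that is claimed) you only need $g_{ij}\in C^1[0,t_0]$ with $g_{ij}''$ bounded on $(0,t_0]$; Taylor's formula with integral remainder then gives $|g_{ij}(t)-g_{ij}(0)-tg_{ij}'(0)|\le \tfrac{t^2}{2}\sup_{(0,t]}|g_{ij}''|$ without requiring second-order regularity at $t=0$ itself. Second, the corner cases are harmless for a more direct reason than smoothing: if $B_i$ and $B_j$ meet only along a set of codimension $\ge 2$, then $B_i\cap\Phi^{-t}B_j$ is contained in an $\mathcal{O}(t)$-tube about that set and hence already has $d$-dimensional volume $\mathcal{O}(t^2)$; since $(A_n)_{ij}=0$ for such pairs by \eqref{ulamformula2}, these entries contribute $\mathcal{O}(t^2)$ to $P_n^t-I-tA_n$ without any differentiation. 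Thus only the entries for face-adjacent pairs (and the diagonal) require the Reynolds argument, and there the moving boundary is a single smooth hypersurface piece meeting $\partial B_i$ transversally for small $t$, so the second differentiation is routine.
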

The following lemma allows us to construct $\mathcal{A}_n$ without the computation of the flow $\Phi^t$.
\begin{lemma}
\label{ulamformula2lemma} For $i\neq j$, define $\mathbf{n}_{ij}$
to be the the unit normal vector pointing out of $B_i$ into $B_j$
if $B_i\cap B_j$ is a $d-1$-dimensional face, and the zero vector
otherwise. The matrix representation of
$\mathcal{A}_n:\Delta_n\circlearrowleft$ with respect to the basis $\chi_1,\ldots,\chi_n$ under multiplication on the left is
\begin{equation}\label{ulamformula2}
 (A_n)_{ij}=\left\{
            \begin{array}{ll}
              \displaystyle \frac{1}{m(B_j)}\int_{B_i\cap B_j}\max\{F(x)\cdot \mathbf{n}_{ij},0\}\ dm_{d-1}(x), & \hbox{$i\neq j$;} \\
              \displaystyle -\sum_{j\neq i} \frac{m(B_j)}{m(B_i)}(A_n)_{ij}, & \hbox{otherwise.}
            \end{array}
          \right.
\end{equation}
\end{lemma}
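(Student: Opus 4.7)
}

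The plan is to start from the flow-based formula \eqref{ulamformula} established in Lemma \ref{ulamformula1lemma} and replace the small-time transport quantities $m(B_i\cap\Phi^{-t}B_j)$ by a first-order flux through the shared face $B_i\cap B_j$. The governing heuristic is that, for $t$ small, $\Phi^t(x)=x+tF(x)+\mathcal{O}(t^2)$, so the only points of $B_i$ that can cross into $B_j$ lie in a thin sliver along the common boundary whose width at the boundary point $y$ is controlled by the outward-normal velocity $F(y)\cdot\mathbf{n}_{ij}$.

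First I would treat the off-diagonal case $i\neq j$ when $B_i\cap B_j$ is a $(d-1)$-dimensional face $\Gamma_{ij}$. In a tubular neighborhood of $\Gamma_{ij}$ inside $B_i$ I would use the parametrization $x = y - s\,\mathbf{n}_{ij}$ with $y\in\Gamma_{ij}$ and $s\in[0,\delta)$; the Jacobian is $1$ at $s=0$, so $dm(x) = (1+\mathcal{O}(s))\,dm_{d-1}(y)\,ds$. A point $x$ lies in $\Phi^{-t}B_j$ exactly when its normal displacement overcomes $s$, i.e.\ $tF(y)\cdot\mathbf{n}_{ij} - s \geq \mathcal{O}(t^2+ts)$. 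Hence
\[
m(B_i\cap\Phi^{-t}B_j)=\int_{\Gamma_{ij}}\max\{tF(y)\cdot\mathbf{n}_{ij},0\}\,dm_{d-1}(y) + \mathcal{O}(t^2),
\]
where the error term uses smoothness of $F$ and the compactness of $\Gamma_{ij}$. Dividing by $t\cdot m(B_j)$ and passing to the limit gives the claimed integral formula. For non-adjacent $i\neq j$ (so $\mathbf{n}_{ij}=0$) the contributing points must travel a positive distance $\mathrm{dist}(B_i,B_j)>0$, so $m(B_i\cap\Phi^{-t}B_j)$ vanishes for $t$ small enough, matching the right-hand side.

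For the diagonal entry I would avoid redoing the flux estimate and instead appeal to mass balance. Writing $B_i = (B_i\cap\Phi^{-t}B_i) \sqcup \bigsqcup_{j\neq i}(B_i\cap\Phi^{-t}B_j)$ up to a set of measure zero yields
\[
m(B_i\cap\Phi^{-t}B_i) - m(B_i) = -\sum_{j\neq i} m(B_i\cap\Phi^{-t}B_j) + \bigl(m(\Phi^{-t}B_i)-m(B_i)\bigr).
\]
The last bracketed term is $\mathcal{O}(t^2)$ because the volume change of $B_i$ under the flow between time $0$ and $t$ equals $\int_0^t\int_{B_i}\dvg F\circ\Phi^{-s}\,dm\,ds = \mathcal{O}(t)$; dividing by $t\cdot m(B_i)$ and using the off-diagonal limits already computed collapses the sum to $-\sum_{j\neq i}\tfrac{m(B_j)}{m(B_i)}(A_n)_{ij}$, which is exactly the stated diagonal formula. (Alternatively, one can bypass the divergence bookkeeping by recognizing that the outflow through the face $\Gamma_{ij}$ is the sum of inflow into $B_j$ and outflow through $\partial B_i$; the upshot is the same.)

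The main technical obstacle is making the flux estimate uniform along $\Gamma_{ij}$ up to its lower-dimensional boundary, where several faces meet. Near such edges, Taylor expansion of $\Phi^t$ combined with the parametrization above still gives a contribution of size at most $t\cdot m_{d-2}(\partial\Gamma_{ij})\cdot\|F\|_\infty\cdot t = \mathcal{O}(t^2)$, which is absorbed into the error. The $C^1$-smoothness of $F$ (assumed at the start of Section~\ref{sec:methods}) and the hyperrectangular/simplicial nature of the partition cells guarantee that these lower-dimensional contributions are negligible in the limit $t\to 0$.
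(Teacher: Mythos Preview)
Your approach matches the paper's: start from the flow formula \eqref{ulamformula}, identify the off-diagonal limit as a flux integral across the shared face, and obtain the diagonal entry by a mass-balance decomposition. Your off-diagonal argument is in fact more detailed than the paper's, which simply asserts that $M'_{ij}(0)$ equals the outward flux $\int_{B_i\cap B_j}\max\{F\cdot\mathbf{n}_{ij},0\}\,dm_{d-1}$ without writing out the tubular-neighborhood estimate; your treatment of edge contributions is a useful addition.

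There is, however, a slip in your diagonal computation. From the decomposition you correctly state,
\[
B_i = (B_i\cap\Phi^{-t}B_i)\,\sqcup\,\bigsqcup_{j\neq i}(B_i\cap\Phi^{-t}B_j)
\]
(valid because $\{\Phi^{-t}B_j\}_j$ is again a partition of $M$), one obtains directly
\[
m(B_i\cap\Phi^{-t}B_i)-m(B_i)=-\sum_{j\neq i}m(B_i\cap\Phi^{-t}B_j),
\]
with \emph{no} additional $(m(\Phi^{-t}B_i)-m(B_i))$ term. This is exactly what the paper uses. The extra term you inserted does not arise from the stated decomposition, and your claim that it is $\mathcal{O}(t^2)$ is also incorrect: your own integral expression shows it is $-t\int_{B_i}\dvg F\,dm + \mathcal{O}(t^2)$, which is genuinely $\mathcal{O}(t)$ when the flow is not volume-preserving and would contaminate the limit. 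The fix is simply to delete that term; then dividing by $t\,m(B_i)$ and invoking the off-diagonal limits immediately yields $(A_n)_{ii}=-\sum_{j\neq i}\tfrac{m(B_j)}{m(B_i)}(A_n)_{ij}$ as required.
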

\begin{proof}
From (\ref{ulamformula}) we have for $i\neq j$ that
$A_{n,ij}=\lim_{t\to 0}\frac{m(B_i\cap \Phi^{-t}B_j)}{t\cdot
m(B_j)}$. Denoting $M_{ij}(t)=m(B_i\cap \Phi^{-t}B_j)$ we have that
${A}_{n,ij}=M'_{ij}(0)/m(B_j)$ where the prime denotes
differentiation with respect to $t$. The quantity $M'_{ij}(0)$ is
simply the rate of flux out of $B_i$ through the face $B_i\cap
B_j$ into $B_j$ and so $M'_{ij}(0)=\int_{B_i\cap B_j}
\max\{F(x)\cdot \mathbf{n}_{ij},0\} dm_{d-1}(x)$.

For the diagonal elements ${A}_{n}$ we have
${A}_{n,ii}=\lim_{t\to 0}\frac{m(B_i\cap
\Phi^{-t}B_i)-m(B_i)}{t\cdot m(B_i)}$. Note that $m(B_i)-m(B_i\cap
\Phi^{-t}B_i)=m(B_i\setminus \Phi^{-t}B_i)$. Clearly $B_i\setminus
\Phi^{-t}B_i=B_i\cap\bigcup_{j\neq i}\Phi^{-t}B_j=\bigcup_{j\neq i}
B_i\cap \Phi^{-t}B_j$ modulo sets of
Lebesgue measure zero. 
Thus, $m(B_i)-m(B_i\cap \Phi^{-t}B_i)=\sum_{j\neq i} m(B_i\cap \Phi^{-t}B_j)$.
Now, by (\ref{ulamformula}), $A_{n,ii}=-\lim_{t\to 0}\frac{\sum_{j\neq i} m(B_i\cap \Phi^{-t}B_j)}{m(B_i)}=-\sum_{j\neq i} \frac{m(B_j)}{m(B_i)}A_{n,ij}.$ %
%
\end{proof}

In one dimension, (\ref{ulamformula2}) has a particularly simple form.
\begin{corollary}
\label{onedcorollary}
Let $M=\mathbb{T}^1$. Assume (without loss\footnote{If $F\ngeq 0$ and $F\nleq 0$, we have one or more stable fixed points, and every trajectory converges to one of them. Hence, there is no interesting statistical behavior to analyze.}) that $F(x)\ge 0$. Let $0=x_0 < x_1 < \cdots < x_n=1$ and consider the partition $\{B_1,\ldots,B_n\}$ with $B_i=[x_{i-1},x_i]$. Then
\begin{equation}\label{ulam1dformula}
(A_n)_{ij}=\left\{
            \begin{array}{ll}
              F(x_i)/m(B_j), & \hbox{$j=i+1$;} \\
              -F(x_i)/m(B_i), & \hbox{$j=i$;} \\
              0, & \hbox{otherwise.} \\
                  \end{array}
          \right.
\end{equation}
\end{corollary}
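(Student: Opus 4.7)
The plan is to read off the formula by directly applying Lemma \ref{ulamformula2lemma} in the special geometry of the one-dimensional torus, and then to use the diagonal formula in that same lemma to handle the $i=j$ case.

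First, I identify the geometry of the partition. Each cell $B_i = [x_{i-1}, x_i]$ is an interval, so the only nonempty intersections $B_i \cap B_j$ (for $i \neq j$) are the singletons $B_i \cap B_{i+1} = \{x_i\}$ (with indices taken cyclically mod $n$ on $\mathbb{T}^1$, so that $B_n \cap B_1 = \{x_0\} = \{x_n\}$). Whenever $|i-j| > 1$ cyclically, the cells do not share a $(d-1)$-dimensional face, hence $\mathbf{n}_{ij} = 0$ and $(A_n)_{ij} = 0$ by definition.

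Second, I compute the outward normals. Since $B_i$ lies to the left of $B_{i+1}$, the unit normal at $x_i$ pointing from $B_i$ into $B_{i+1}$ is $\mathbf{n}_{i,i+1} = +1$; similarly, $\mathbf{n}_{i,i-1} = -1$ at $x_{i-1}$. Because $F \geq 0$ by hypothesis, $F(x_i)\cdot \mathbf{n}_{i,i+1} = F(x_i) \geq 0$ while $F(x_{i-1})\cdot \mathbf{n}_{i,i-1} = -F(x_{i-1}) \leq 0$, so the positive parts give $F(x_i)$ and $0$ respectively. Since the shared face is a single point, the $(d-1)$-dimensional Hausdorff (Lebesgue) measure reduces to point evaluation, so
\[
(A_n)_{i,i+1} = \frac{1}{m(B_{i+1})} \int_{\{x_i\}} \max\{F(x),0\} \intd m_0(x) = \frac{F(x_i)}{m(B_{i+1})},
\]
and $(A_n)_{i,i-1} = 0$.

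Third, I invoke the diagonal clause of Lemma \ref{ulamformula2lemma}: since only $j = i+1$ contributes to the sum,
\[
(A_n)_{ii} = -\sum_{j\neq i} \frac{m(B_j)}{m(B_i)} (A_n)_{ij} = -\frac{m(B_{i+1})}{m(B_i)}\cdot \frac{F(x_i)}{m(B_{i+1})} = -\frac{F(x_i)}{m(B_i)},
\]
which matches the second line of (\ref{ulam1dformula}). There is no real obstacle here; the only subtlety worth being explicit about is that on $\mathbb{T}^1$ the indexing wraps around (so $x_n$ is identified with $x_0$ and the entry $(A_n)_{n,1}$ is included in the ``$j = i+1$'' case), and that the integral over a $0$-dimensional face is interpreted as evaluation at the boundary point.
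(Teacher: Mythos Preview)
Your proof is correct and follows exactly the approach implicit in the paper: the corollary is stated without proof there, as an immediate specialization of Lemma~\ref{ulamformula2lemma} to $d=1$. Your explicit identification of the shared faces, the normals $\mathbf{n}_{i,i\pm1}=\pm1$, the use of $F\ge0$ to evaluate the positive parts, and the diagonal sum are precisely what is needed.
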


\begin{remark}[Connections with the upwind scheme]	\label{rem:upwind}
Clearly, $\A_n$ is the spatial discretization from the so-called upwind scheme in finite volume methods; cf.~\cite{Lev02}. The scheme is known to be stable. Stability of finite volume schemes is often related to ``numerical diffusion'' in them.

We demonstrate numerical diffusion on a simple example, for further details we refer to~\cite{Lev02} Section~8.6.1. Set $M=\mathbb{T}^1$ and $F(x)=\hat F>0$ $\forall x\in M$. Then $\A f=-\hat F f'$ and $\A_n$ is the backward difference scheme. Consider $f_n:=\pi_nf$ as a vector of values. For sufficiently smooth $f$ we have
\begin{eqnarray*}
(\A_n f)_i=n\hat F\left(f_{n,i-1}-f_{n,i}\right)&=&\left(\pi_n\A f\right)_i + \mathcal{O}\left(n^{-1}\right),
\end{eqnarray*}
but
\begin{eqnarray*}
(\A_n f)_i=n\hat F\left(f_{n,i-1}-f_{n,i}\right)
						 & = & n\hat F\left(\frac{f_{n,i-1}-f_{n,i+1}}{2}+\frac{f_{n,i-1}-2f_{n,i}+f_{n,i+1}}{2}\right) \\
						 & = & \hat F\frac{f_{n,i-1}-f_{n,i+1}}{2n^{-1}}+\frac{\hat F}{2n}\frac{f_{n,i-1}-2f_{n,i}+f_{n,i+1}}{n^{-2}} \\
						 & = & \left(\pi_n\left(\A f+\frac{\hat F}{2n}\Delta f\right)\right)_i + \mathcal{O}\left(n^{-2}\right),
\end{eqnarray*}
hence $\A_n f$ is a better approximation of $\A_{\ep}f$, with $\tfrac{\ep^2}{2}=\tfrac{\hat F}{2n}$, and $\A_{\ep}$ is the infinitesimal generator associated with the SDE $\tfrac{dx}{dt} = F(x)+\ep\tfrac{dW}{dt}$ than of $\A f$. That is why one expects quantities computed by $\A_n$ to reflect the actual behavior of $\A_{\ep}$. Since the above equations contain only \textit{local} estimates, for a general nonconstant $F$ is the diffusion also spatially varying; i.e.\ $\tfrac{\ep(x)^2}{2}=\tfrac{F(x)}{2n}$. Note, that this notion of numerical diffusion is easily extended to multiple space dimensions.

The definition \eqref{eq:AnUlam} allows us to interpret the  numerical diffusion in yet another way. We showed in Proposition~\ref{prop:RnearQP} that $P_n^t$ is the transition matrix of a Markov process near the Markov jump process generated by $A_n$ for small $t>0$. The discretized FPO $P_n^t$ can be related to a non-deterministic dynamical system, which, after mapping the initial point, adds some uncertainty to produce a uniform distribution of the image point in the box where it landed; see~\cite{Fr05b}.
Thus, this uncertainty resulting from the numerical discretization, equivalent to the numerical diffusion in the upwind scheme, can be viewed as the reason for robust behavior --- stability.
\end{remark}

Finally, we show that our constructions (\ref{ulamformula2}) and (\ref{ulam1dformula}) always provide a solution to the system $\mathcal{A}_nf=0$ for some $f\in\Delta_n$.
\begin{lemma}
\label{existenceofzerond} There exists a nonnegative, nonzero
$f\in \Delta_n$ so that $\mathcal{A}_nf=0$.
\end{lemma}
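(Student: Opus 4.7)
The key observation is that $A_n$ is an infinitesimal generator of a continuous-time Markov chain up to a diagonal rescaling. I would introduce $D := \operatorname{diag}(m(B_1),\ldots,m(B_n))$ and consider $B_n := A_n D$, whose entries are $(B_n)_{ij} = m(B_j)\,(A_n)_{ij}$. From Lemma~\ref{ulamformula2lemma} one reads off three properties: the off-diagonal entries of $B_n$ are nonnegative (since $(A_n)_{ij}\ge 0$ for $i\ne j$); the diagonal entries are nonpositive (since $(A_n)_{ii}\le 0$); and the row sums vanish, because the formula $(A_n)_{ii}=-\sum_{j\ne i}(m(B_j)/m(B_i))\,(A_n)_{ij}$ is equivalent to $\sum_j m(B_j)\,(A_n)_{ij}=0$. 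Thus $B_n$ is the rate matrix of a continuous-time Markov chain on $n$ states.

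Next I would invoke the standard fact that every finite-state continuous-time Markov chain admits at least one stationary distribution. This can be established in one line: for sufficiently small $t>0$, the matrix $I+tB_n$ has nonnegative entries with row sums equal to $1$, i.e.\ it is row-stochastic. Perron--Frobenius applied to $I+tB_n$ (or, equivalently, Brouwer's fixed point theorem applied to the continuous self-map $\pi\mapsto \pi(I+tB_n)$ of the probability simplex) yields a nonnegative, nonzero row vector $\pi$ with $\pi(I+tB_n)=\pi$, i.e.\ $\pi B_n=0$.

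Since $D$ is invertible, $\pi B_n=\pi A_n D=0$ forces $\pi A_n=0$, so $\pi$ lies in the left kernel of $A_n$. Setting $f:=\sum_{i=1}^n \pi_i \chi_{B_i}\in\Delta_n$ then gives a nonnegative, nonzero element satisfying $\mathcal{A}_n f=0$ by the left-multiplication convention of Lemma~\ref{ulamformula1lemma}. The only conceptual step is spotting the diagonal rescaling that converts $A_n$ into an honest CTMC generator; once that is done, the argument is a textbook Perron--Frobenius application, and I do not anticipate any substantive obstacle.
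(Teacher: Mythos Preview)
Your proof is correct and follows essentially the same approach as the paper's: both rescale $A_n$ by the diagonal matrix $D=\operatorname{diag}(m(B_1),\ldots,m(B_n))$ to obtain a matrix with nonnegative off-diagonal entries and zero row sums, then shift to a nonnegative matrix and apply Perron--Frobenius to extract a nonnegative left null vector. The only cosmetic difference is that the paper uses the similarity $D^{-1}A_nD$ together with an additive shift $+cI$, whereas you use the one-sided product $A_nD$ and the row-stochastic matrix $I+tB_n$; your version is arguably a little cleaner since $\pi A_nD=0$ immediately gives $\pi A_n=0$ without unwinding a two-sided conjugation.
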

\begin{proof}
Let $D_{n,ij}=m(B_i)\delta_{ij}$ and note that
$Q_n:=D_n^{-1}A_nD_n$ satisfies
\begin{equation}
\label{simmatrix}
(Q_n)_{ij}=\left\{
            \begin{array}{ll}
              \frac{m(B_j)}{m(B_i)}A_{ij}, & \hbox{$i\neq j$;} \\
              -\sum_{j\neq i} \frac{m(B_j)}{m(B_i)}(A_n)_{ij}, & \hbox{otherwise.}
            \end{array}
          \right.
\end{equation}
Note that all row sums of $Q_n$ equal zero.
Let $c=\sum_{j\neq i} Q_{n,ij}$. The matrix
$\hat{Q}_n:=Q_n+cI$ is nonnegative with all row sums equal to $c$.
By the Perron-Frobenius theorem \cite{BePl}, the largest eigenvalue of
$\hat{Q}_n$ is $c$ (of multiplicity\footnote{If $\hat{Q}_n$ is aperiodic (there exists $k$ such
that $\hat{Q}_n^k>0$) then the eigenvalue $c$ is simple and the corresponding eigenvector is positive (see \cite{BePl}).} possibly greater than 1) and one of the corresponding left eigenvectors pair $u_n$ is nonnegative. Clearly $u_n$ is a left
eigenvector of $Q_n$ corresponding to the eigenvalue 0 and
thus $D_nu_n$ is a nonnegative left eigenvector
corresponding to 0 for $A_n$.
\end{proof}

\subsection{Spectral collocation for the generator}

The eigenfunctions of $\A_\ep$ are smooth. This motivates the use of smooth approximation functions, e.g. polynomials.  We here outline the general principles of spectral collocation methods;  for a more thorough presentation we refer to \cite{CaHuQu91a}, \cite{Boyd01} or \cite{Tr00a}.

Choose a family of approximation spaces $\{V_n\}_{n\in\N}$, such that $V_n\subset C^{\infty}(M)$ for all $n$.  Depending on the type of the phase space, we use two different approximation spaces. We introduce them in one dimension; the multidimensional ones can then be constructed by tensor products.  In both cases, the approximation space comes with an associated set of collocation nodes:
\begin{itemize}
\item \emph{Periodic domain/uniform grid.} We have $M = \mathbb{T}^1$ and restrict ourselves to odd values of $n$. Then the basis we choose for $V_n$ is
\[
\left\{e^{ikx}\right\}_{-n/2-1\leq k \leq n/2}.
\]
The associated collocation nodes are the uniform grid $\{0,1/n,\ldots,(n-1)/n\}$.
\item \emph{Standard domain/Chebyshev grid.} Here, $M = [-1,1]$. The space $V_n$ is spanned by the monomials of order $0$ to $n$. We use Chebyshev polynomials as basis functions:
\[
\left\{\cos\left(k\ \textrm{arccos}\left(x\right)\right)\right\}_{0\leq k\leq n},
\]
together with the Chebyshev grid $\{-\cos(2\pi j/n), j=0,\ldots,n\}$, as collocation nodes.
\end{itemize}
Let $f\in V_n$ and $\mathcal{I}_n:C^\infty\to V_n$ be the interpolation operator for the given collocation nodes.  We define the approximate generator by
\[
\A_{\ep,n}f := \mathcal{I}_n\A_\ep f.
\]
For both cases we have following:
\begin{theorem}[Spectral accuracy, \cite{CaHuQu91a}]\label{thm:spectral}
 For $f\in C^{\infty}(M)$ let $f_n$ be the best approximation of $f$ in $V_n$ w.r.t. the supremum norm $\|\cdot\|_{\infty}$. Then for each $k\in\N$ there is a $c_k>0$ such that
 \begin{equation}
     \|f-f_n\|_{\infty}\leq c_k\ n^{-k}\qquad\text{for all }n\in\N.
 \label{eq:bestapp}
 \end{equation}
\end{theorem}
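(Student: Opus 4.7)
The plan is to reduce everything to decay of generalized Fourier coefficients and use that the best approximation in $V_n$ is no worse than any specific approximant we choose. Since
\[
\|f - f_n\|_\infty \le \|f - g\|_\infty \quad \text{for every } g \in V_n,
\]
it suffices in each of the two cases to exhibit an explicit element $g_n \in V_n$ satisfying the bound \eqref{eq:bestapp}. The natural choice is truncation of the appropriate series expansion.

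\textbf{Periodic case.} I would take $g_n$ to be the partial Fourier sum
\[
g_n(x) = \sum_{|k| \le n/2} \hat f(k)\, e^{ikx}, \qquad \hat f(k) = \frac{1}{2\pi}\int_{\mathbb{T}^1} f(x) e^{-ikx}\, dx.
\]
For $f \in C^\infty(\mathbb{T}^1)$, repeated integration by parts yields
\[
\hat f(k) = \frac{1}{(ik)^{k+2}} \widehat{f^{(k+2)}}(k), \qquad |\hat f(k)| \le |k|^{-(k+2)} \|f^{(k+2)}\|_{L^1},
\]
for every $k \ge 1$ and $k\neq 0$. The sup-norm error is bounded by
\[
\|f - g_n\|_\infty \le \sum_{|j| > n/2} |\hat f(j)| \le 2\|f^{(k+2)}\|_{L^1} \sum_{j > n/2} j^{-(k+2)} \le c_k n^{-k},
\]
since the tail of the $p$-series with exponent $k+2$ is of order $(n/2)^{-(k+1)}$.

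\textbf{Chebyshev case.} I would use the substitution $x = \cos\theta$, which identifies a smooth function $f$ on $[-1,1]$ with the smooth even $2\pi$-periodic function $g(\theta) := f(\cos\theta)$. Smoothness of $g$ on $\mathbb{T}^1$ follows because the odd derivatives of $g$ at $\theta=0,\pi$ vanish automatically (the reflection principle), so there is no loss of regularity at the endpoints. The Chebyshev expansion of $f$ corresponds exactly to the cosine Fourier expansion of $g$, so the truncated Chebyshev series $g_n \in V_n$ is the image of the truncated cosine series of $g$. Applying the periodic estimate above to $g$ gives the same decay rate for $\|f - g_n\|_\infty = \|g - (\text{truncation})\|_\infty$.

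The only non-routine point is the Chebyshev case: checking that $\theta \mapsto f(\cos\theta)$ is genuinely $C^\infty$ on $\mathbb{T}^1$ (not only continuous) when $f$ is only assumed $C^\infty([-1,1])$. This is where the even symmetry matters; without it one would lose derivatives at $\theta \in \{0,\pi\}$. Once this is in hand, everything reduces to Fourier-coefficient decay via integration by parts, and the constants $c_k$ are controlled by $\|f^{(k+2)}\|_{L^1}$ (respectively $\|g^{(k+2)}\|_{L^1}$), both finite by the assumed smoothness.
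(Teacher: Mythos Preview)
The paper does not supply its own proof of this theorem; it is stated as a citation from \cite{CaHuQu91a}, so there is no argument in the paper to compare against. Your approach --- bound the best approximation by the truncated Fourier (resp.\ Chebyshev) series and control the tail via integration-by-parts decay of the coefficients --- is exactly the standard route taken in that reference, and it is correct in substance.

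Two minor points worth cleaning up. First, you overload the symbol $k$: in the displayed identity $\hat f(k)=(ik)^{-(k+2)}\widehat{f^{(k+2)}}(k)$ the same letter serves simultaneously as the Fourier mode index and as the decay order from the theorem. Rename the mode index (say $j$) so the estimate reads $|\hat f(j)|\le |j|^{-(k+2)}\|f^{(k+2)}\|_{L^1}$, after which the tail bound $\sum_{|j|>n/2}|j|^{-(k+2)}=\mathcal{O}(n^{-(k+1)})$ is transparent. Second, the point you flag as ``non-routine'' --- smoothness of $\theta\mapsto f(\cos\theta)$ on $\mathbb{T}^1$ --- is in fact automatic: it is the composition of $C^\infty$ maps, and the chain rule applies directly since $\cos\theta$ stays in $[-1,1]$. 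The evenness and vanishing of odd derivatives at $\theta\in\{0,\pi\}$ are consequences, not hypotheses, of this smoothness. So the Chebyshev case really does reduce immediately to the periodic one.
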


Convergence then follows from standard results on the analysis of Galerkin methods for elliptic differential operators and spectral approximation (cf.\ also \cite{Ko10a}):
\begin{theorem}
 Let $M$ be a compact tensor product domain with smooth boundary.  Let $F$ be a smooth vector field on $M$ and $\A_\ep$, $\A_{\ep,n}$ defined as above, with Neumann (resp. periodic) boundary conditions. Then the eigenvalues and eigenfunctions of $\A_\ep$ are approximated by the corresponding eigenvalues and eigenfunctions of $\A_{\ep,n}$ with spectral accuracy.
\end{theorem}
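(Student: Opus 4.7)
The plan is to reduce the claim to two ingredients already in hand: (i) the elliptic regularity of $\A_\ep$, which ensures that its eigenfunctions are smooth and thus well within the approximation class covered by Theorem~\ref{thm:spectral}; and (ii) a general spectral convergence theorem for Galerkin/collocation approximations of operators with compact resolvent. Specifically, I would first observe that on the tensor product domain with the stated boundary conditions, $\A_\ep = \tfrac{\ep^2}{2}\Delta - \dvg(\cdot F)$ is a uniformly elliptic second-order operator with smooth coefficients. By Proposition~\ref{prop:AepsGenC0semigroups} and the compactness statement quoted from \cite{E.88c}, $\A_\ep$ has compact resolvent, so its spectrum consists of isolated eigenvalues of finite multiplicity. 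Standard elliptic regularity then gives that every eigenfunction lies in $C^{\infty}(M)$.

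Next I would set up the abstract Galerkin framework. The collocation scheme $\A_{\ep,n} = \mathcal{I}_n \A_\ep$ can be recast as a Petrov–Galerkin method: one seeks $f_n \in V_n$ such that $(\A_{\ep,n}f_n)(x_k) = (\lambda_n f_n)(x_k)$ at the collocation nodes, equivalently $\mathcal{I}_n \A_\ep f_n = \lambda_n f_n$. To invoke general Galerkin convergence theory (e.g. the results of Osborn/Babu{\v s}ka–Osborn applied through the collocation-as-Galerkin recasting in \cite{CaHuQu91a}), the two things to verify are: (a) the spaces $V_n$ are asymptotically dense in a suitable smoothness scale containing the eigenfunctions, and (b) the discrete operators are consistent with $\A_\ep$ on smooth functions. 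Item (a) follows directly from Theorem~\ref{thm:spectral}: given a smooth eigenfunction $f$, the best approximation error decays faster than any $n^{-k}$. Item (b) is essentially the defining property of the interpolation operator together with the fact that $\A_\ep f$ is again smooth, so $\|\A_\ep f - \mathcal{I}_n \A_\ep f\|_\infty$ also decays spectrally.

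With these ingredients, the standard Bramble–Hilbert/Osborn-type estimates give, for each isolated eigenvalue $\lambda$ of $\A_\ep$ with eigenfunction $f$,
\[
|\lambda - \lambda_n| \;\le\; C\bigl(\inf_{g\in V_n}\|f-g\|_{\A_\ep}\bigr)^2,\qquad \|f - f_n\|_{\A_\ep} \;\le\; C\,\inf_{g\in V_n}\|f-g\|_{\A_\ep},
\]
where $\|\cdot\|_{\A_\ep}$ is a graph/Sobolev norm adapted to $\A_\ep$. Since $f$ is $C^\infty$, the infimum on the right decays faster than any polynomial in $n$ by Theorem~\ref{thm:spectral}, yielding spectral accuracy for both eigenvalues and eigenfunctions.

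The main obstacle I anticipate is the rigorous enforcement of boundary conditions in the Galerkin/collocation setting. For the periodic/Fourier case this is automatic, but for the Chebyshev case the Neumann conditions are imposed only at the boundary collocation nodes and must be shown to be compatible with the continuous Neumann problem in a way strong enough to preserve the self-adjoint (or sectorial) structure needed for spectral convergence. Handling this carefully—ensuring the discrete operators $\A_{\ep,n}$ converge in the strong resolvent sense to $\A_\ep$ so that isolated eigenvalues and their spectral projectors behave continuously—is the step where I would have to lean hardest on the cited spectral collocation literature rather than on a short self-contained argument.
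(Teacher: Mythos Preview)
Your proposal is correct and aligns with the paper's own treatment: the paper does not give a detailed proof but simply states that convergence ``follows from standard results on the analysis of Galerkin methods for elliptic differential operators and spectral approximation,'' citing \cite{CaHuQu91a} and \cite{Ko10a}. Your outline---elliptic regularity yielding $C^\infty$ eigenfunctions, compact resolvent, recasting collocation as a Galerkin scheme, and then invoking Babu\v{s}ka--Osborn type estimates together with Theorem~\ref{thm:spectral}---is precisely the standard argument the paper is pointing to, so you have essentially reconstructed the intended proof.
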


\paragraph{Why Neumann boundary conditions?} The objects that we are approximating are densities, so there should be no ``loss of mass'' as time goes on. In a closed physical system, the flow normal to the boundary is zero. Hence, there is no loss of mass through advection. The physical meaning of diffusion is a flow of mass in the direction opposite to the gradient and proportional to its magnitude. Thus, no loss of mass via diffusion translates into Neumann boundary conditions for the densities: their normal gradients have to be zero at the boundary. Equivalently, one may write \eqref{eq:FPE} as a continuity equation,
 \[ \partial_tf = \dvg(I), \]
 with $I:=\tfrac{\ep^2}{2}\nabla f-fF$ being the probability flow or probability current. The condition $I|_{\partial M}=0$ leads precisely  to Neumann boundary conditions.

\subsection{Algorithms}

\quad
\begin{algorithm}[Ulam's method]
\label{ulamalg}
\quad
\begin{enumerate}
\item Partition $M$ into connected sets $\{B_1,\ldots,B_n\}$ of positive volume.  Typically, each $B_i$ will be a hyperrectangle or simplex.
\item Choose $t>0$ and compute the matrix $P^t_n$ or $P^t_{\ep,n}$ as described in Section \ref{sec:ulam}.
   %
\item  Estimates of invariant densities for $\Phi^t$ are given by fixed points of $P_{n}^t$ (or $P_{\ep,n}^t$): Let $vP_{n}^t =v$.  Then $f:=\sum_{i=1}^nv_i\chi_{B_i}$ satisfies $\P_{n}^t f = f$.
\item Similarly, eigenvectors of $P_{n}^t$ corresponding to real eigenvalues $\lambda \approx 1$ provide information about almost-invariant sets (cf.\ Theorem~\ref{thm:ai}) and sets with low escape rates (cf.\ Theorem \ref{thm:er}).
\end{enumerate}
\end{algorithm}
Note that $P_{n}^t$ typically is a sparse matrix and the number of nonzero entries per column will be determined by Lipschitz constants of $\Phi^t$.

\begin{algorithm}[Ulam's method for the generator]
\label{ulamalg_gen}
\quad
\begin{enumerate}
\item Partition $M$ into connected sets $\{B_1,\ldots,B_n\}$ of positive volume.  Typically each $B_i$ will be a hyperrectangle or simplex.
\item Compute\begin{equation*}
\renewcommand{\arraystretch}{2.0}
 (A_n)_{ij}=\left\{
            \begin{array}{ll}
            \displaystyle  \frac{1}{m(B_j)}\int_{B_i\cap B_j}\max\{F(x)\cdot \mathbf{n}_{ij},0\}\ dm_{d-1}(x), & \hbox{$i\neq j$,} \\
            \displaystyle -\sum_{j\neq i} \frac{m(B_j)}{m(B_i)}(A_n)_{ij}, & \hbox{otherwise,}
            \end{array}
          \right.
\end{equation*}
where one uses standard quadrature rules to estimate the integral.
\item Estimates of invariant densities lie in the null space of $A_n$.  Let $vA_n =0$ (the existence of such a $v$ is guaranteed by Lemma \ref{existenceofzerond}), then $f:=\sum_{i=1}^nv_i\chi_{B_i}$ satisfies $\mathcal{A}_nf=0$.
\item Similarly, eigenvectors of $A_n$ corresponding to large real eigenvalues $\lambda<0$ provide information about almost-invariant sets and sets with low escape rates.
\end{enumerate}
\end{algorithm}

Note that the discretized generator $A_n$ is a sparse matrix since $A_{n,ij}=0$ if $B_i$ and $B_j$ do not share a boundary.

\begin{algorithm}[Spectral collocation for the generator]
\quad
\begin{enumerate}
	\item Set up a grid $x_0,\ldots,x_n$, which is the Chebyshev grid $x_j = -\cos\left(2\pi j/n\right)$, $j=0,\ldots,n$, if $M=[-1,1]$, or the equispaced grid $x_j=j/n$, if $M=\mathbb{T}^1$. (If the state space $M$ is an affine transformation of the above ones, the grid is transformed analogously.) For multidimensional tensor product spaces the grid is the tensor product of the one dimensional grids.
	\item Let $\{\ell_0,\ldots,\ell_n\}$ denote the Lagrange basis on the grid, i.e.\ all $\ell_i$ are polynomials of maximal degree $n$ with $\ell_i(x_j)=\delta_{ij}$. Denote the interpolation on the grid by $\mathcal{I}_n$, and define the discretization matrix obtained by collocation as
	\[
	(A_{\ep,n})_{ij} = (\mathcal{I}_n\A_\ep\ell_j)(x_i).
	\]
	Note, that the computation of these matrix entries is very simple. If $M=\mathbb{T}^1$, we may switch between the evaluation space and the frequency space simply by the fast Fourier transformation. Multiplication by the vector field $F$ is pointwise multiplication in the evaluation space, computing the derivative  is a diagonal scaling in the frequency space. This can be simply extended to $M=[0,1]$ (cf.\ \cite{Tr00a} Chapter~8), as well as for a multidimensional phase space.
	\item Compute the left eigenvector $v$ of $A_{\ep,n}$ at the eigenvalue of smallest magnitude. Then, $\sum_{i=0}^nv_i\ell_i$ approximates the invariant density.
	\item Similarly, eigenvectors of $A_{\ep,n}$ corresponding to large real eigenvalues $\lambda < 0$ provide information about almost-invariant sets.
\end{enumerate}

\end{algorithm}

\subsection{Solving the eigenproblem}

Once the matrix approximation of the operator or the generator has been computed, we then have to solve the corresponding eigenvalue problem.
In Ulam's method one seeks dominant eigenvalues, and standard Arnoldi type methods easily provide the interesting part of the spectrum (we use the \texttt{eigs} function in Matlab).

For the approximate generator, we look for eigenvalues with small
modulus.  Here, the Arnoldi iteration in \texttt{eigs} uses inverse iteration, i.e.\ repeatedly solves linear systems of the type $A_n u=b$.
Since the matrix $A_n$ is singular, this is not a well posed problem
and we might expect some difficulties with these iterative methods.
A possibility for overcoming them is to solve the eigenvalue problem
for $\bar{A}_n:=A_n-\sigma I_{n\times n}$, with a suitable shift
$\sigma$. This merely introduces a shift of the spectrum and the
eigenfunctions stay the same. For $n$ big enough\footnote{If $A_n$
is obtained from the Ulam-based approach, the spectrum of $A_n$, which
is the generator of a Markov jump process, is in the left complex
half plane. This does not hold in general for the spectral collocation
approach, here we rely on the fast convergence of eigenvalues.}, the
spectrum of $\bar{A}_n$ is going to be well separated from zero.
The size $n$ of the linear system may be large, so a direct solution
via LU factorization may not be feasible. However, in Ulam's method for the generator, $A_n$ is a sparse matrix, hence iterative methods
(e.g.\ GMRES) can be used.

For spectral collocation, in general, the matrix $A_n$ will not be sparse. Nevertheless, in the case where the domain is periodic, the matrix-vector product $A_n x$ can be computed fast using the FFT.

\subsection{Computational complexity}

For $d$ dimensional systems in general, \eqref{ulamformula2} shows us that setting up the approximate generator requires the computation of $(d-1)$- dimensional integrals, while to set up the Ulam-matrix, we compute $d$-dimensional integrals. Of course, the gain is biggest in one dimension (zero dimensional integrals are one-point-evaluations).
Equally importantly, \eqref{ulamformula2} does not require integration of the vector field.

For Ulam's method and Ulam's method for the generator, the matrix entries should be computed to an accuracy $\mathcal{O}(n^{-1/d})$; otherwise we cannot expect the approximate eigenfunctions to exploit the full potential of the approximation space (which is an error of $\mathcal{O}(n^{-1/d})$ for $n$ partition elements in a $d$ dimensional space). Assuming the quadrature rule used to compute the entries to also suffer from the curse of dimension, we expect to need $\mathcal{O}(n)$ and $\mathcal{O}(n^{(d-1)/d})$ function evaluations for each matrix entry in the Ulam-matrix, and the matrix arising from Ulam's method for the generator, respectively. This explains the third column of Table~\ref{tab:error_sources} below.

\section{Examples}

A collection of examples is presented in the following. The examples range
over three phase space dimensions, different boundary types, and chaotic and regular dynamics.
We find that the generator approaches are typically superior to the standard Ulam approach, producing more accurate results with less computation time.  If the vector field is infinitely differentiable, the spectral collocation approach can be extremely accurate.  In our final example, we find that for flows on complicated attractors, the standard Ulam method can take advantage of the attracting dynamics to produce better estimates of invariant densities than the generator approaches.  In all cases, the operator based methods are far superior to direct trajectory integration for estimating the invariant density. Other structures such as almost-invariant sets can only be identified using the operator approaches.

\subsection{A flow on the circle}		\label{ssec:sineflow}
We start with a one dimensional example, a flow on the unit circle. The vector field is given by
\[
F(x) = \sin(4\pi x)+1.1,
\]
$x\in \mathbb{T}^1 = [0,1]$ with periodic boundary conditions, and we wish to compute the invariant density of the system.  Recall that an invariant density $f\in L^1(\mathbb{T}^1)$ needs to satisfy $\A f = 0$, where $\A f(x) = -\intd(fF)(x)/\intd x$.  The unique solution to this equation is $f^{*}(x)=C/F(x)$, $C$ being a normalizing constant (i.e. such that $\|f\|_1=1$). The honours thesis \cite{Stancevic} numerically investigated the estimation of the invariant density for this flow, and other two-dimensional flows, by finding approximate eigensolutions of the infinitesimal generator using finite difference and finite element methods.
Here we use the three methods discussed in the previous section to approximate $f^*$ and compare their efficiency relative to one another and a histogram of a long simulation:
\begin{enumerate}
\item the classical method of Ulam for the Frobenius-Perron operator,
\item Ulam's method for the generator and
\item spectral collocation for the generator.
\end{enumerate}
As we have a periodic domain and the vector field is infinitely smooth, we expect spectral collocation to perform very well.
Figure \ref{fig:sineflow} shows the true invariant density (dashed line), together with its approximations by the four methods.

\subsubsection{Matlab code}

In this 1D case, the three methods can each be realized in a few lines of Matlab.  For illustration purposes, we include the code here.

\paragraph{Ulam's method}
\begin{Verbatim}[frame=single,fontsize=\footnotesize]
F = @(x) sin(4*pi*x)+1.1;                          % definition of the vector field
n = 32; x = 1/(2*n^2):1/(n^2):1-1/(2*n^2);         % nodes for spatial integration
[t,y] = ode23(@(t,x) F(x),[0 2/n],x');             % time integration of the nodes
I = ceil(max(n*mod(y(end,:),1),1));                % cell indices of image points
J = reshape(ones(n,1)*(1:n),1,n*n);                % construction of transition matrix
P = sparse(I,J,1/n,n,n);
[v,d] = eig(full(P));                              % spectrum of transition matrix
f_n = @(x) abs(v(ceil(max(x*n,1)),1));             % plot approximate inv. density
L1 = quadl(f_n,0,1); fplot(@(x)f_n(x)/L1,[0,1])
\end{Verbatim}

The error in Ulam's method decreases like $\mathcal{O}(n^{-1})$ for smooth invariant densities \cite{DiDuLi93a}.  Thus, we need to compute the transition rates between the intervals to an accuracy of $\mathcal{O}(n^{-1})$ (since otherwise we cannot expect the approximate density to have a smaller error). To this end, we use a uniform grid of $n$ sample points in each interval.  This leads to $\mathcal{O}(n^2)$ evaluations of the vector field.
For the numbers in Figure~\ref{fig:sineflow} we only counted each point once, i.e.\ we neglected the fact that for the time integration we have to perform several time steps per point.
\begin{figure}[h]
 \centering
     \includegraphics[width=0.35\textwidth]{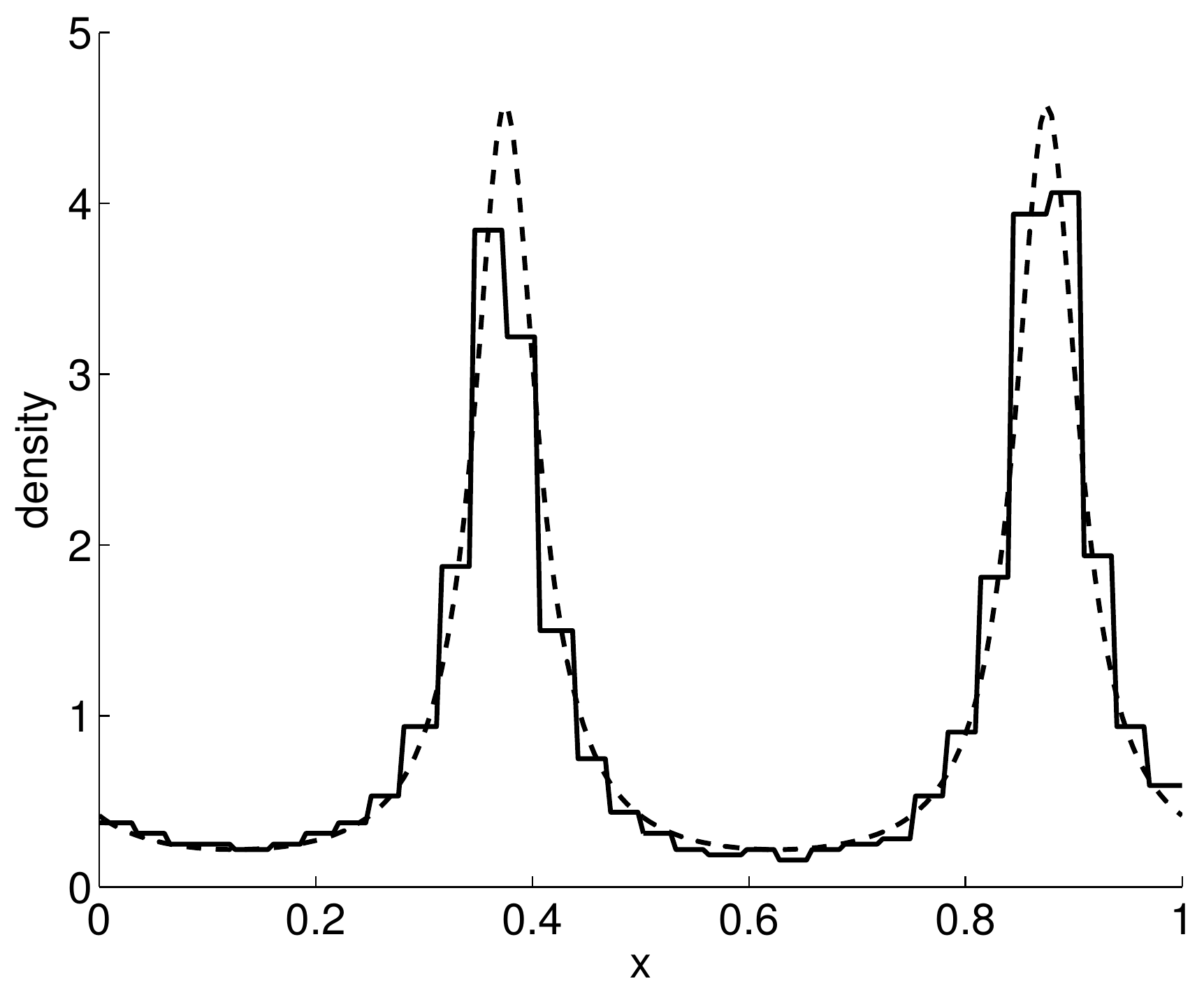}\qquad
     \includegraphics[width=0.35\textwidth]{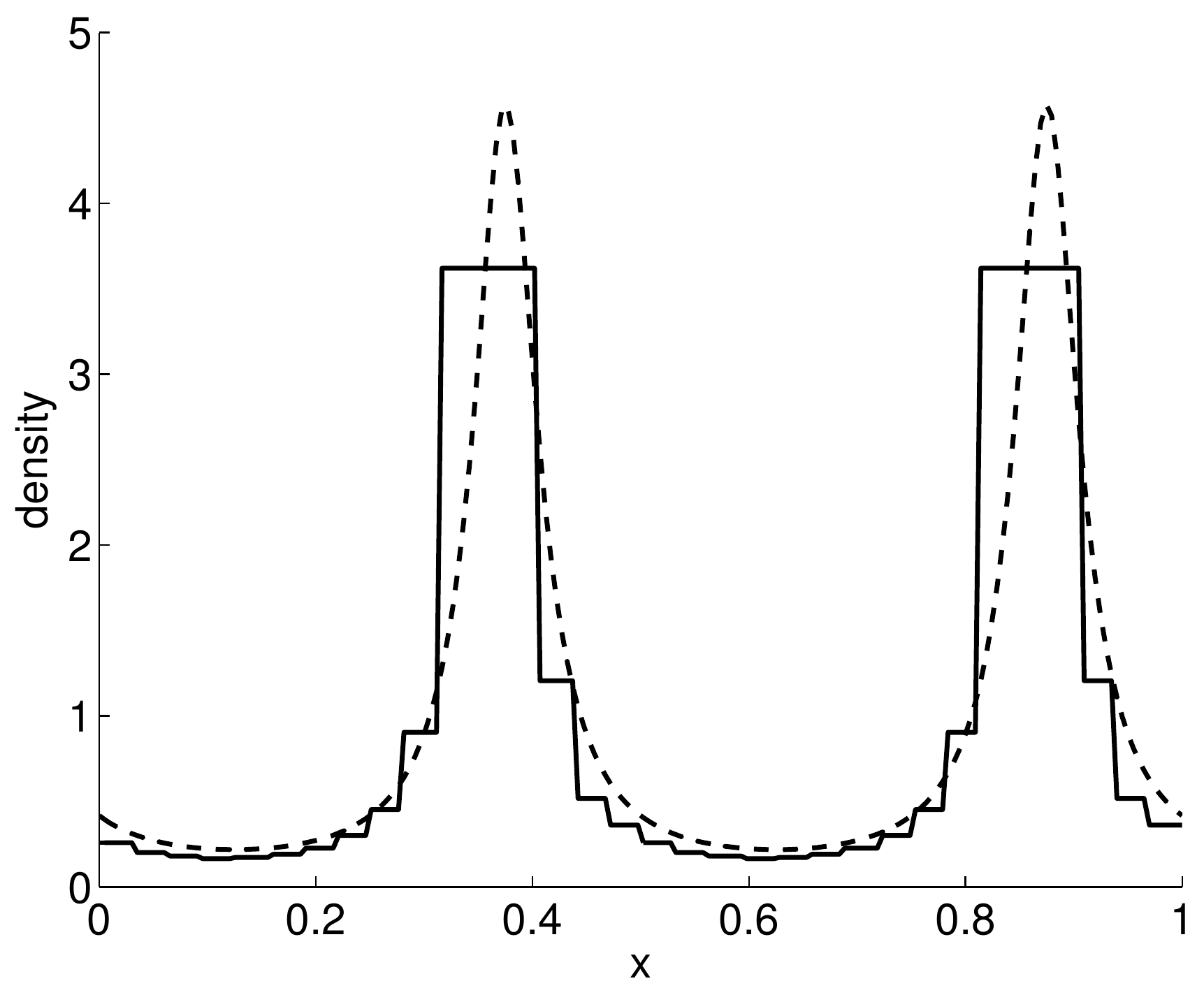}
     \includegraphics[width=0.35\textwidth]{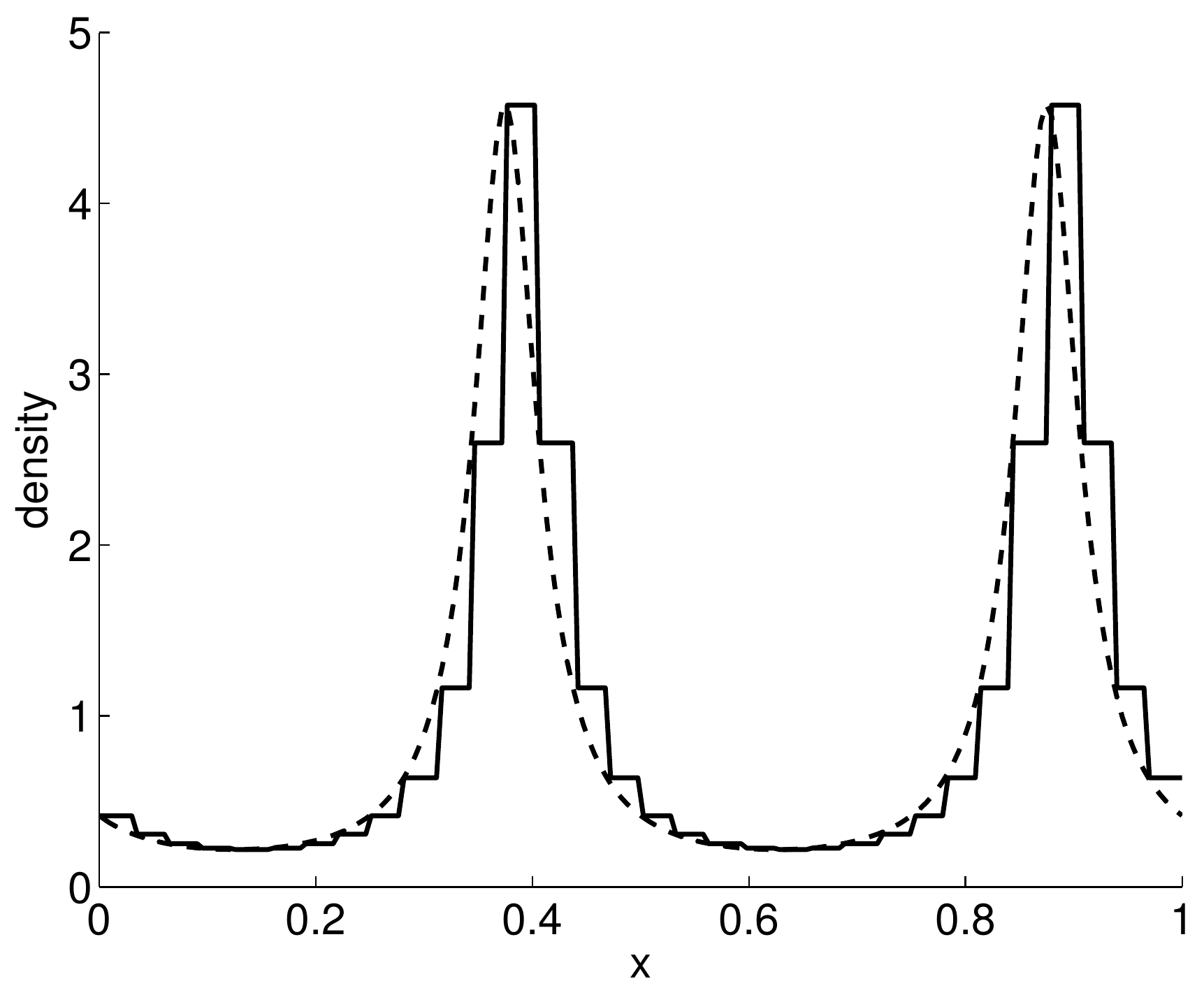}\qquad
     \includegraphics[width=0.35\textwidth]{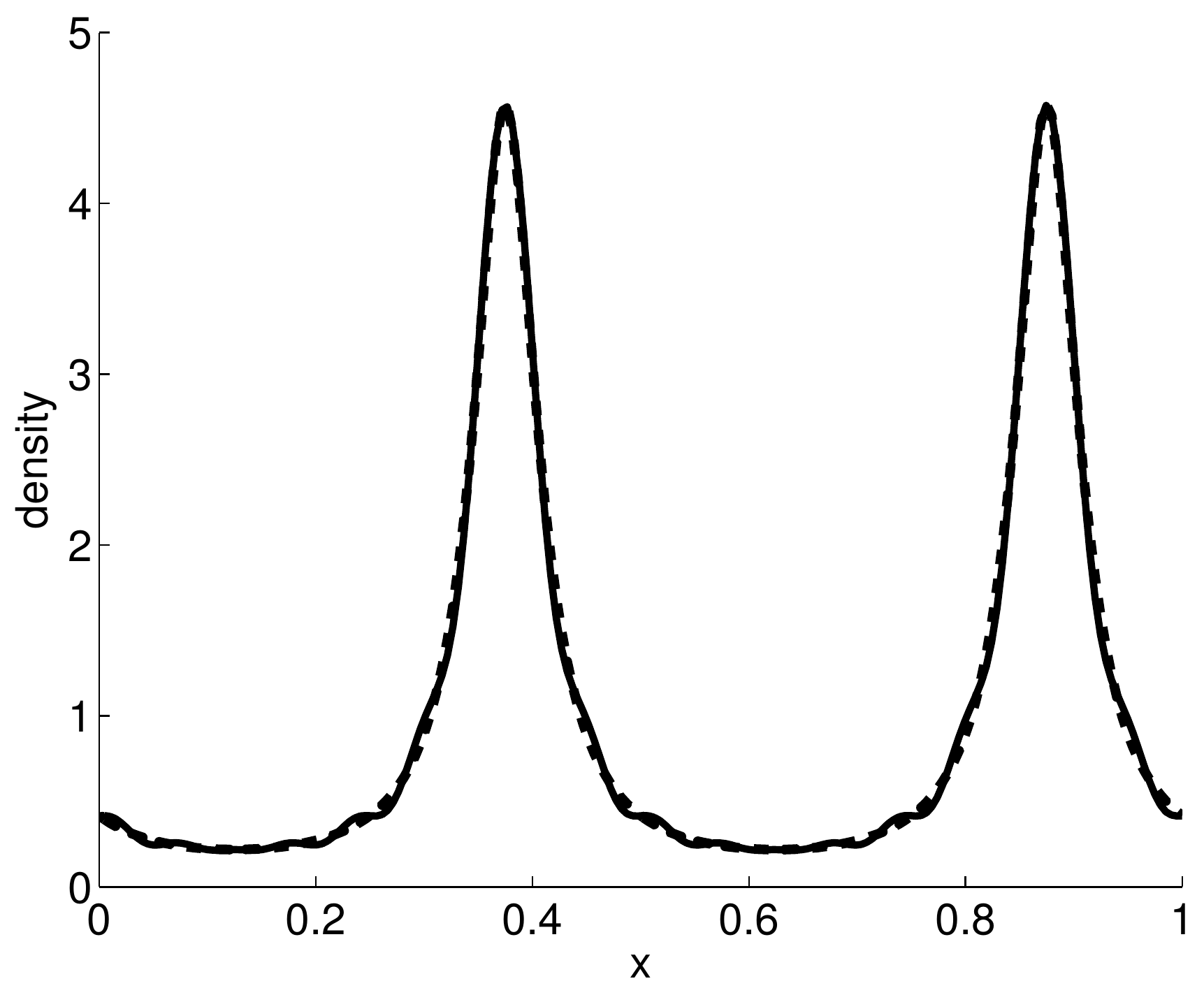}
 \caption{True invariant density (dashed line), approximation by histogramming a long simulation (top left, 1024 iterates), Ulam's method (top right, 32 sample points per partition element, 32 partition elements), Ulam's method for the generator (bottom left, 32 partition elements) and spectral collocation (bottom right, 32 collocation points).}	
 \label{fig:sineflow}
 \end{figure}

\paragraph{Ulam's method for the generator}

\begin{Verbatim}[frame=single,fontsize=\footnotesize]
F = @(x) sin(4*pi*x)+1.1;                           % definition of the vector field
n = 32; Fx = F(0:1/n:1-1/n);                        % evaluation on the boundary nodes
A = n*sparse(1:n,1:n,-Fx) + sparse(1:n,[2:n 1],Fx); % assembling the discrete generator
[v,d] = eig(full(A'));                              % spectrum
[d,I] = sort(diag(d));
f_n = @(x) abs(v(ceil(max(x*n,1)),I(1)));           % plot approximate inv. density
L1 = quadl(f_n,0,1); fplot(@(x)f_n(x)/L1,[0,1])
\end{Verbatim}

Here, only one evaluation of the vector field per interval is needed as in one dimension the integration on the boundaries reduces to the evaluation of a single boundary point.  On a partition with $n$ intervals, this method then yields an accuracy of $\mathcal{O}(n^{-1})$.
Note that from Corollary~\ref{onedcorollary} it follows that the vector $v$ with $v_i = 1/F(x_i)$ is a left eigenvector of the
transition matrix~(\ref{ulam1dformula}) for the generator at the
eigenvalue 0. This fact proves pointwise convergence of the
invariant density of the discretization towards the real one.
Thus Ulam's method for the generator is very accurate and stable;  the $L^1$ error is $\mathcal{O}(1/n)$ as this is the rate at which approximants created from a basis of characteristic functions converge in $L^1$ to regular functions.

\paragraph{Spectral collocation}

\begin{Verbatim}[frame=single,fontsize=\footnotesize]
F = @(x) sin(4*pi*x)+1.1;                        % definition of the vector field
n = 32; Fx = F(0:1/n:1-1/n);                     % evaluation in collocation nodes
E = ifft(eye(n));                                % basis
FE = fft(diag(Fx)*E);                            % multiplication by vector field
I = [0:n/2-1 -n/2:-1];                           % frequencies
D = (2i*pi)*I';                                  % differentiation matrix
A = -D*ones(1,n).*FE;                            % discrete generator in frequency space
[v,lambda] = eig(A);                             % spectrum
f_n = @(x) real(exp(x'*I*2i*pi)*v(:,end));       % approximate density
L1 = quadl(f_n,0,1); fplot(@(x)f_n(x)/L1,[0,1])  % plot
\end{Verbatim}

Here, the vector field is evaluated once per grid point (cf.\ the second line of the code).  As predicted by Theorem~\ref{thm:spectral}, the accuracy increases exponentially with $n$ (cf.\ Figure~\ref{fig:sineflow2}).

\subsubsection{Computational efficiency}

In Figure~\ref{fig:sineflow2} (left) we compare the efficiency of the four methods in terms of how the $L^1$-error of the computed invariant density depends on the number of evaluations of the vector field.  The $L^1$-error was computed using an adaptive Lobatto quadrature as implemented in, e.g., Matlab's \texttt{quadl} command. These errors are due to the approximation space, the approximate numerical computation of matrix entries, and due to solving the discretized eigenvalue problem. To illustrate how these effects accumulate, in Figure~\ref{fig:sineflow2} (right) we compare the errors of three different approximations, for different (uniform) partitions: the approximate invariant densities obtained from the two Ulam type methods, and the projection of the true invariant density. We conclude that the error due to the approximation space dominates the total error.

\begin{figure}[h]
 \centering
     \includegraphics[width=0.50\textwidth]{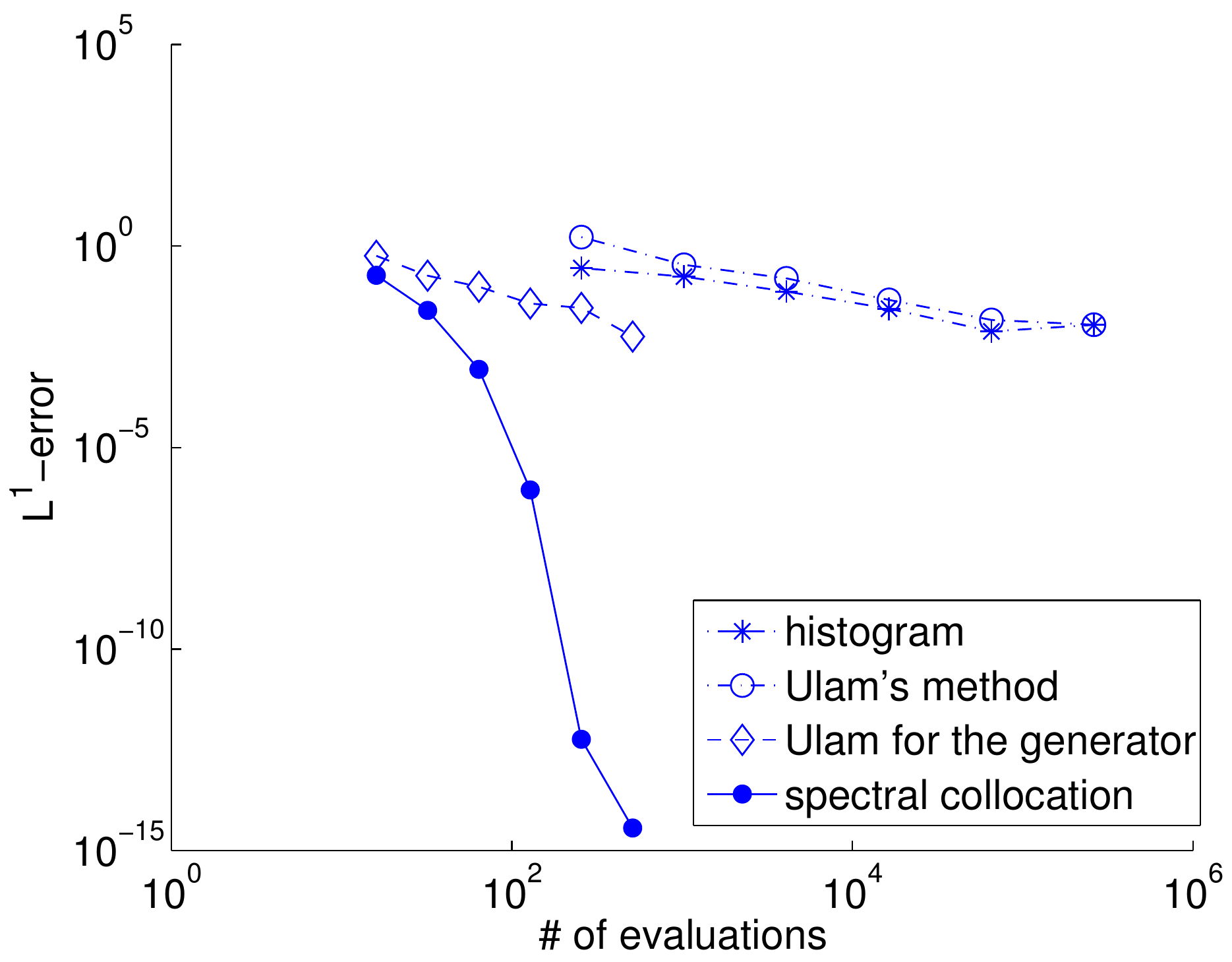}
     \includegraphics[width=0.49\textwidth]{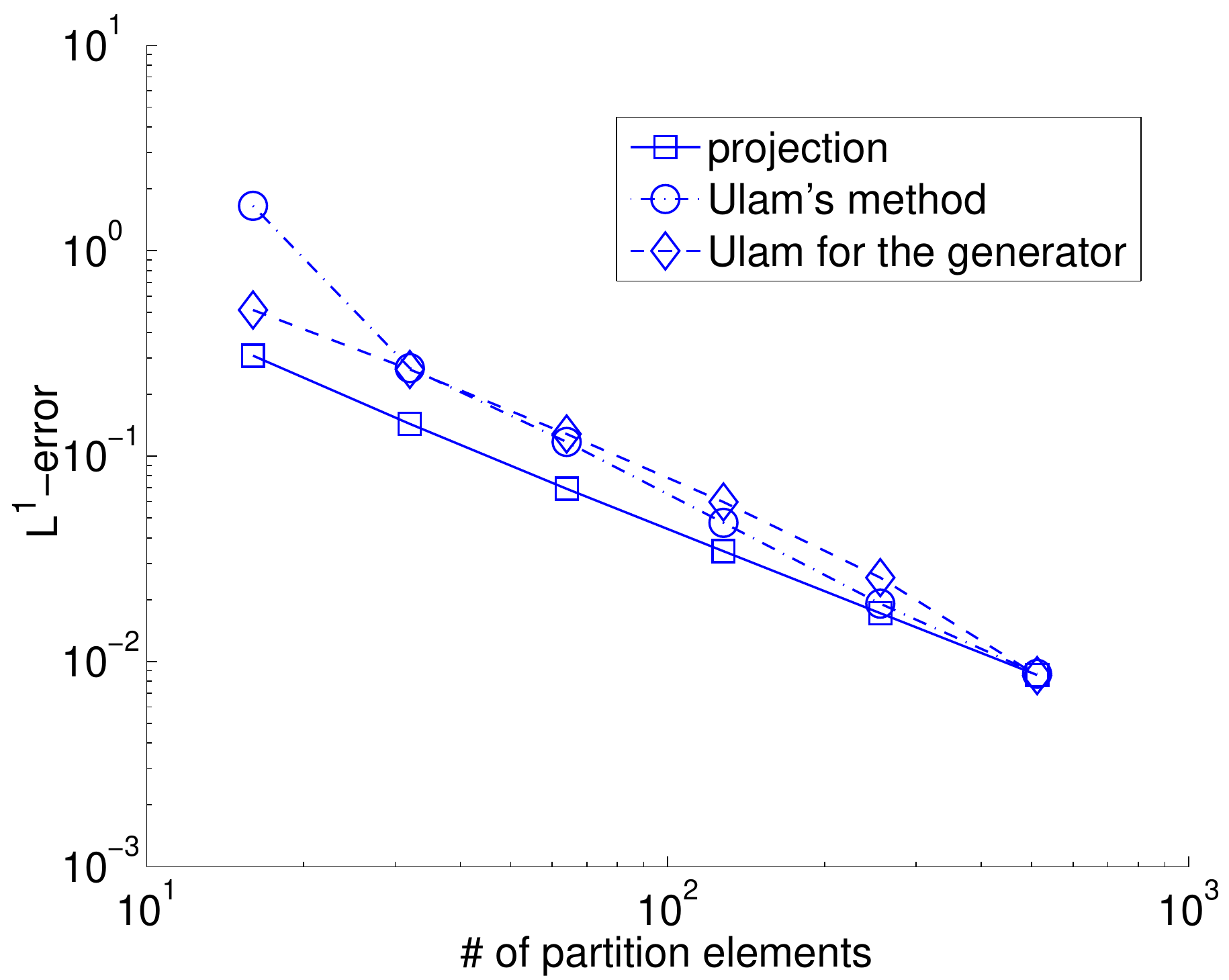}
 \caption{Left: $L^1$-error of the approximate invariant density as a function of the number of evaluations of the vector field ($2^4$ - $2^9$ partition elements, resp. collocation points, $2^8$ - $2^{18}$ number of iterations). Right: $L^1$-error of the approximate invariant densities obtained by projection of the true invariant density on the underlying partition, by Ulam's method, and by Ulam's method for the generator.}
 \label{fig:sineflow2}
 \end{figure}

\begin{table}[htb]
\centering\caption{Sources of computational cost. The dimension of the approximation space is denoted by $n$, $d$ denotes the dimension of state space.  In contrast to the other two methods, spectral collocation produces a full matrix.  One can reduce the cost for solving the eigenproblem in this case to $\mathcal{O}(n\log(n))\cdot\#(\text{GMRES iterations})$ by using an iterative solver (e.g.\ GMRES), and FFT to compute matrix-vector products.}
\renewcommand{\arraystretch}{1.5}
\begin{tabular}{l|c|c|c|c}
method & $\substack{\text{approximation}\\ \text{error}}$ & $\substack{\text{flops to set up matrix /}\\ \text{time integration}}$ & $\substack{\text{dimension} \\ \text{of spatial}\\ \text{integrals}}$ & $\substack{\text{flops to solve EVP /}\\ \text{vector iteration type}}$ \\[3pt] \hline
 Ulam's method & $\mathcal{O}(n^{-1/d})$ & $\mathcal{O}(n^{2})$ / yes & $d$ & $\mathcal{O}(n)$ / forw. \\[3pt]
$\substack{\text{Ulam's method}\\ \text{for the generator}}$ & $\mathcal{O}(n^{-1/d})$ & $\mathcal{O}\big(n^{(2d-1)/d}\big)$ / no & $d-1$ & $\mathcal{O}(n)$ / backw. \\[3pt]
$\substack{\text{spectral collocation}\\ \text{for the generator}}$ & $\mathcal{O}(n^{-k/d}),\ k\in\N$ & $\mathcal{O}(n)$ / no & $0$ & $\mathcal{O}(n^3)$ / backw.
\end{tabular}
\label{tab:error_sources}
\end{table}
The number of grid sets (32) chosen in Figure \ref{fig:sineflow} is tiny, and chosen merely for illustration purposes.  Likewise, the maximum number of evaluations used in both generator schemes of 512 is also obviously tiny, and in practice one could very cheaply increase the number of grid sets and concomitantly the number of evaluations.  The main message from this example is that in the right setting: low dimension, periodic domain, and infinitely smooth vector field, spectral collocation can significantly outperform standard Ulam and generator Ulam.
Ulam's method for the generator is clearly outperforming standard Ulam in this example, and we will see that it continues to do so across a variety of dimensions, domains, and vector fields.

\subsection{An area-preserving cylinder flow}

We consider an area-preserving flow on the cylinder, defined by interpolating a numerically given vector field as shown in Figure~\ref{fig:qg_vector_field}, which is a snapshot from a quasi-geostrophic flow, cf. \cite{TrPa94}.
The domain is periodic with respect to the $x$ coordinate and the field is zero at the boundaries $y=0$ and $y=8\cdot 10^{5}$.  Again, we apply the three methods discussed in Section~\ref{sec:methods} in order to compute approximate eigenfunctions of the transfer operator and the generator.
\begin{figure}[htb]
 \centering
 \includegraphics[width=0.4\textwidth]{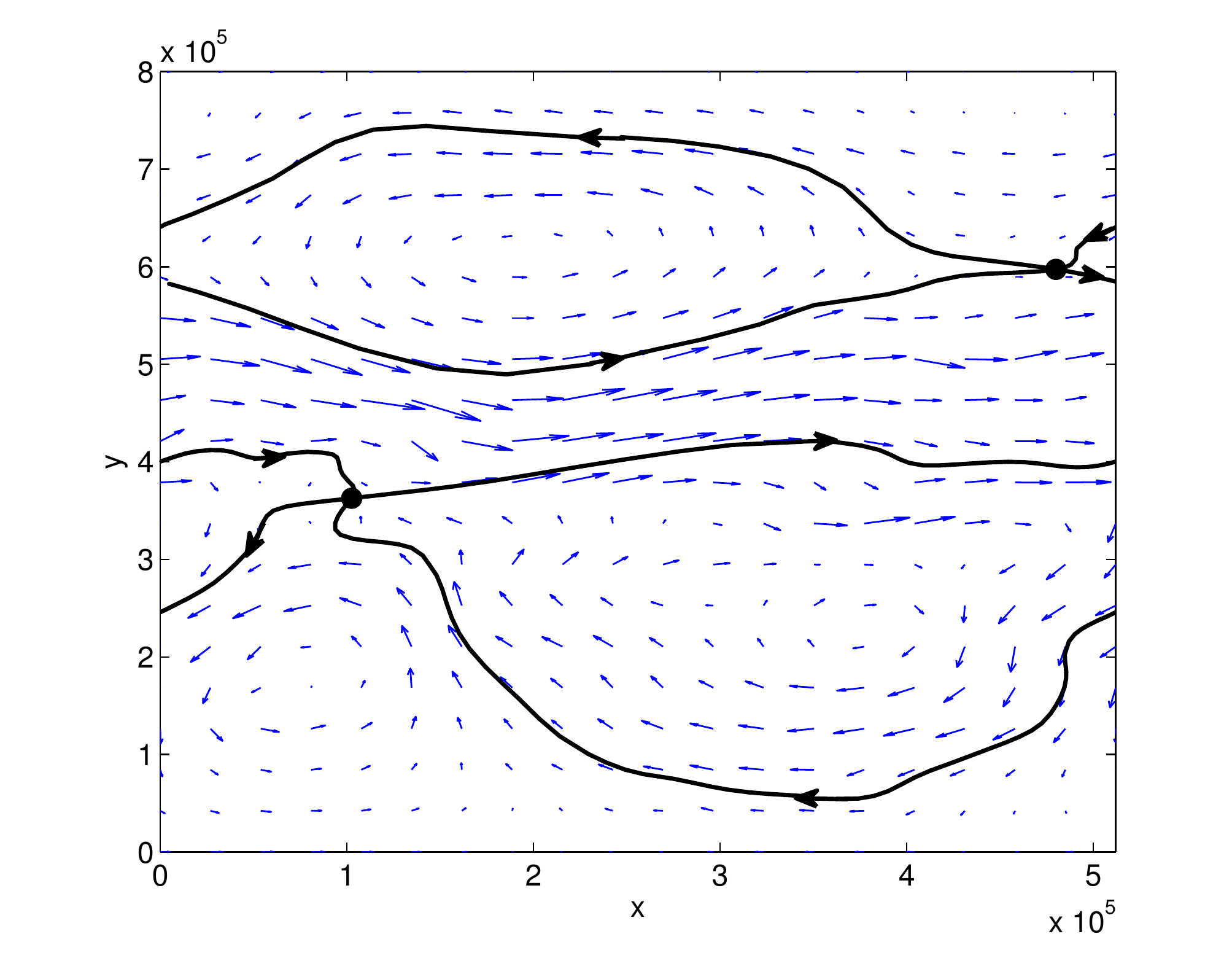}
 \caption{Vector field of the area-preserving cylinder flow.  Shown are also two saddle equilibria (black dots) together with their stable and unstable manifolds.}
 \label{fig:qg_vector_field}
\end{figure}

\subsubsection{Perturbing the model}   Since the flow is area-preserving, we have a continuum of closed orbits, and each of them is an invariant set.  On each, an invariant measure can be supported, i.e.\ the eigenspace at $1$ (resp.\ $0$) is infinite-dimensional.  Both Ulam type methods can be interpreted as a small random perturbation of the original system (albeit different ones), yielding a unique stationary eigenvector of the resulting matrix. However, simultaneously with decreasing the cell size, the size of this ``built-in'' perturbation shrinks and it is therefore not immediate to which invariant measure the discretization schemes converge to. In order to make the problem well posed and the results comparable among the three methods we therefore need to artificially add a random perturbation (in form of a diffusion term) to the model.

We therefore choose the noise level $\ep$ such that the resulting diffusion coefficient $\ep^2/2$ has the same order of magnitude as the numerical diffusion present within Ulam's method for the generator, but is still larger than the latter.  Since the estimate from Remark~\ref{rem:upwind} yields a numerical diffusion coefficient of $\approx 120$ (see also~\cite{Ko10a}), we choose $\ep=\sqrt{2\cdot 500}$ here.

In \textbf{Ulam's method}, for the simulation of the SDE (\ref{eq:SDE}) a fourth order Runge-Kutta method is used, where in every time step a properly scaled (by a factor $\sqrt{h}\cdot\ep$, where $h$ is the time step) normally distributed random number is added.  We use $1000$ sample points per box and the integration time $T = 5\cdot 10^6$, which is realized by 20 steps of the Runge--Kutta method.  Note that the integrator does not know that the flow lines should not cross the lower and upper boundaries of the state space. Points that leave phase space are projected back along the $y$ axis into the next boundary box. An adaptive step--size control could resolve this problem, however at the cost of even more right hand side evaluations.  The domain is partitioned into $
128 \times 128$ boxes.
For \textbf{Ulam's method for the generator}, again, we employ a partition of $128 \times 128$ boxes and approximate the edge integrals by the trapezoidal rule using three nodes (which turns out to be sufficiently accurate in this case).
For \textbf{spectral collocation}, we employ 51 Fourier modes in the $x$ coordinate (periodic boundary conditions) and the first 51 Chebyshev polynomials in the $y$ coordinate, together with Neumann boundary conditions.  
Due to spectral convergence, this smaller number of modes already yields an accuracy in the invariant density comparable to the one of the other two approaches.

\subsubsection{Computing almost invariant sets}
In Table~\ref{tab:qg-effort} we compare approximations for the three leading eigenvalues from our three schemes.  If we assume that spectral collocation gives a very accurate result, Ulam's method should be $\mathcal{O}(n^{-1})$ from it. With $n=128$ this matches well with the numbers in the table.  Ulam's method for the generator generates additional numerical diffusion; this scales the corresponding eigenvalues.
\begin{table}[htb]
\caption{Approximate eigenvalues.}
\centering
\begin{tabular}{l|c|c|c}
   method																	& $\lambda_2$ & $\lambda_3$ & $\lambda_4$ \\\hline
   Ulam's method ($\log(\lambda_i)/T$)	& $-1.64\cdot 10^{-8}$ & $-0.91\cdot 10^{-7}$ & $-1.06\cdot 10^{-7}$ \\
   Ulam's method for the generator		    & $-1.98\cdot 10^{-8}$ & $-1.03\cdot 10^{-7}$ & $-1.19\cdot 10^{-7}$ \\
   spectral collocation	for the generator			& $-1.65\cdot 10^{-8}$ & $-0.92\cdot 10^{-7}$ & $-1.05\cdot 10^{-7}$ 
\end{tabular}
\label{tab:qg-effort}
\end{table}
In Figure~\ref{fig:qg-evs} we compare the approximate eigenvectors
of the second, third and fourth relevant eigenvalues of the transfer
operator (resp.\ generator).   Clearly, they all give the
same qualitative picture.  Yet, the number of evaluations of the
vector field (and thus the associated cpu times) differ significantly, as shown in Table~\ref{tab:qg-effort2}.
%
\begin{table}[htbp]
\centering\caption{Number of evaluations of the vector field and cpu times (on a 2.5 GHz Core Duo) for computing the approximate operator resp.\ generator and cpu times for computing the leading 4 eigenvalues/-vectors.}
\begin{tabular}{l|l|l|l}
   method    & \# of rhs evals & time matrix & time \tt{eigs}\\\hline
   Ulam's method                   & $\approx 3\cdot 10^8$ & 3020 sec. & 1.0 sec.\\
   Ulam's method for the generator      & $\approx 3\cdot 10^5$ & 143 sec. & 0.8 sec.\\
   spectral collocation for the generator    & $\approx 3\cdot 10^3$ & 0.4 sec. & 3 sec.
\end{tabular}
\label{tab:qg-effort2}
\end{table}
\begin{figure}
 \centering
     \subfigure[Ulam's method]{
         \includegraphics[width=0.32\textwidth]{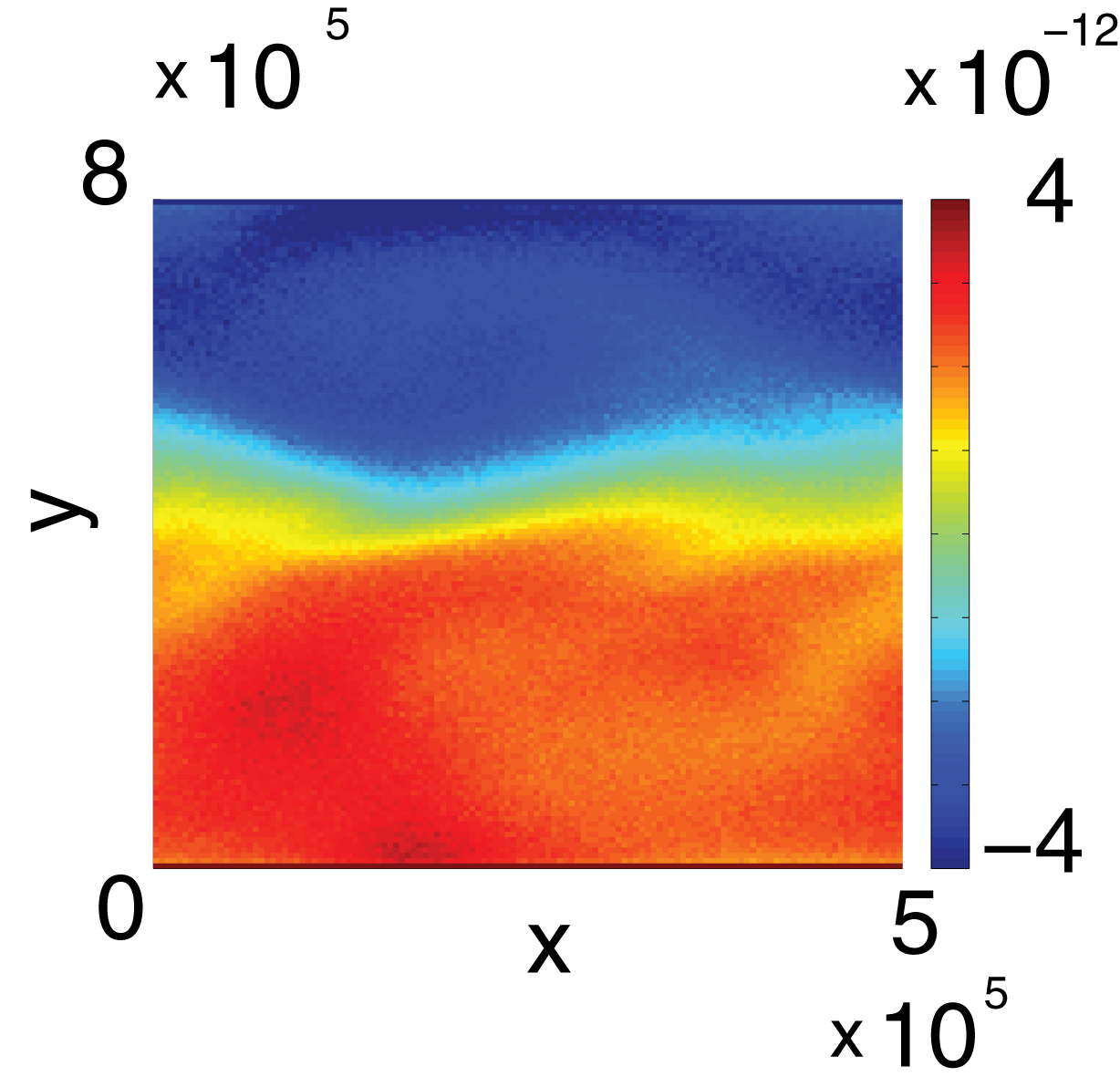}
         \includegraphics[width=0.32\textwidth]{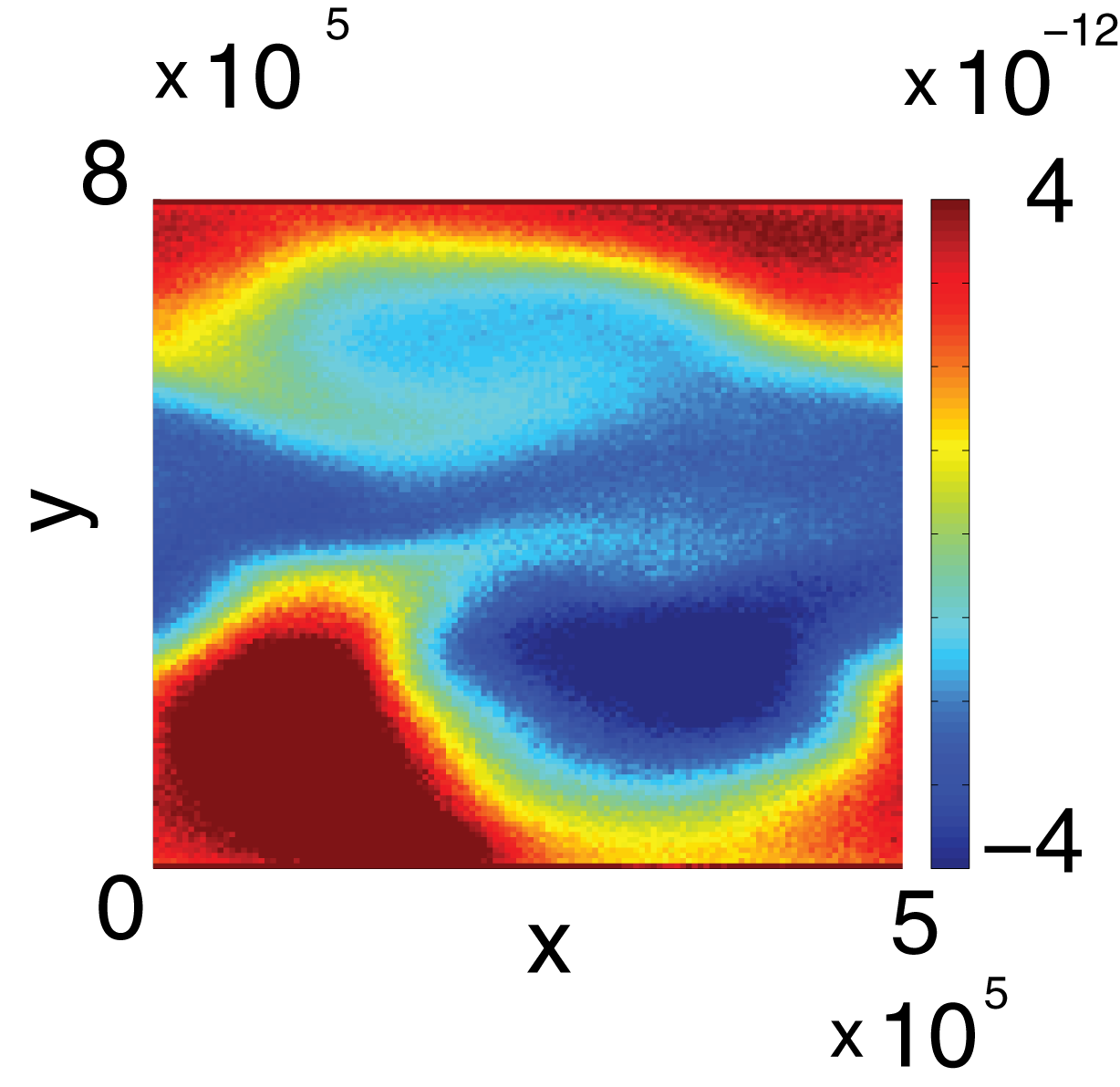}
         \includegraphics[width=0.32\textwidth]{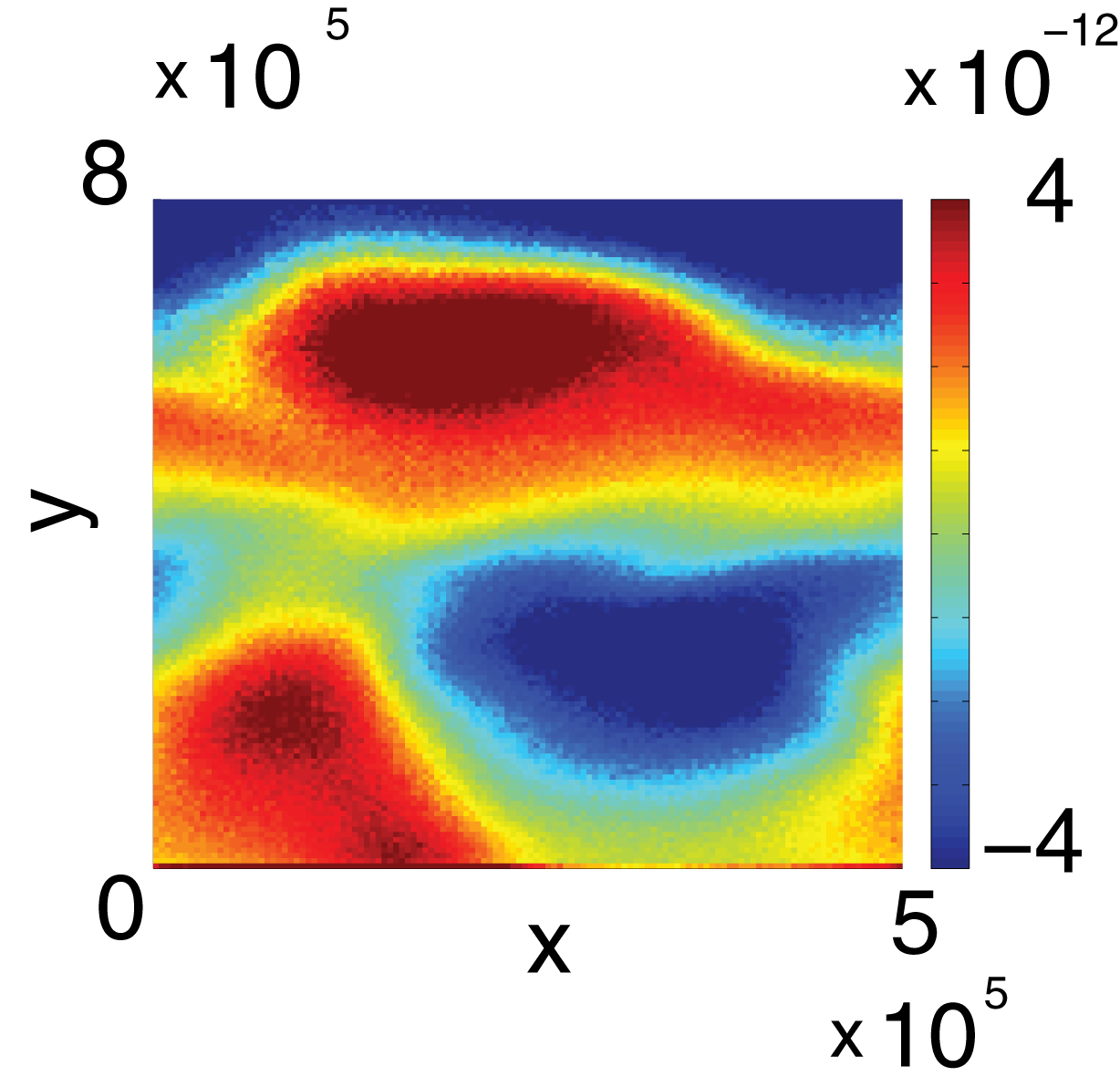}
         \label{fig:qg_ulam}}
     \subfigure[Ulam's method for the generator]{
         \includegraphics[width=0.32\textwidth]{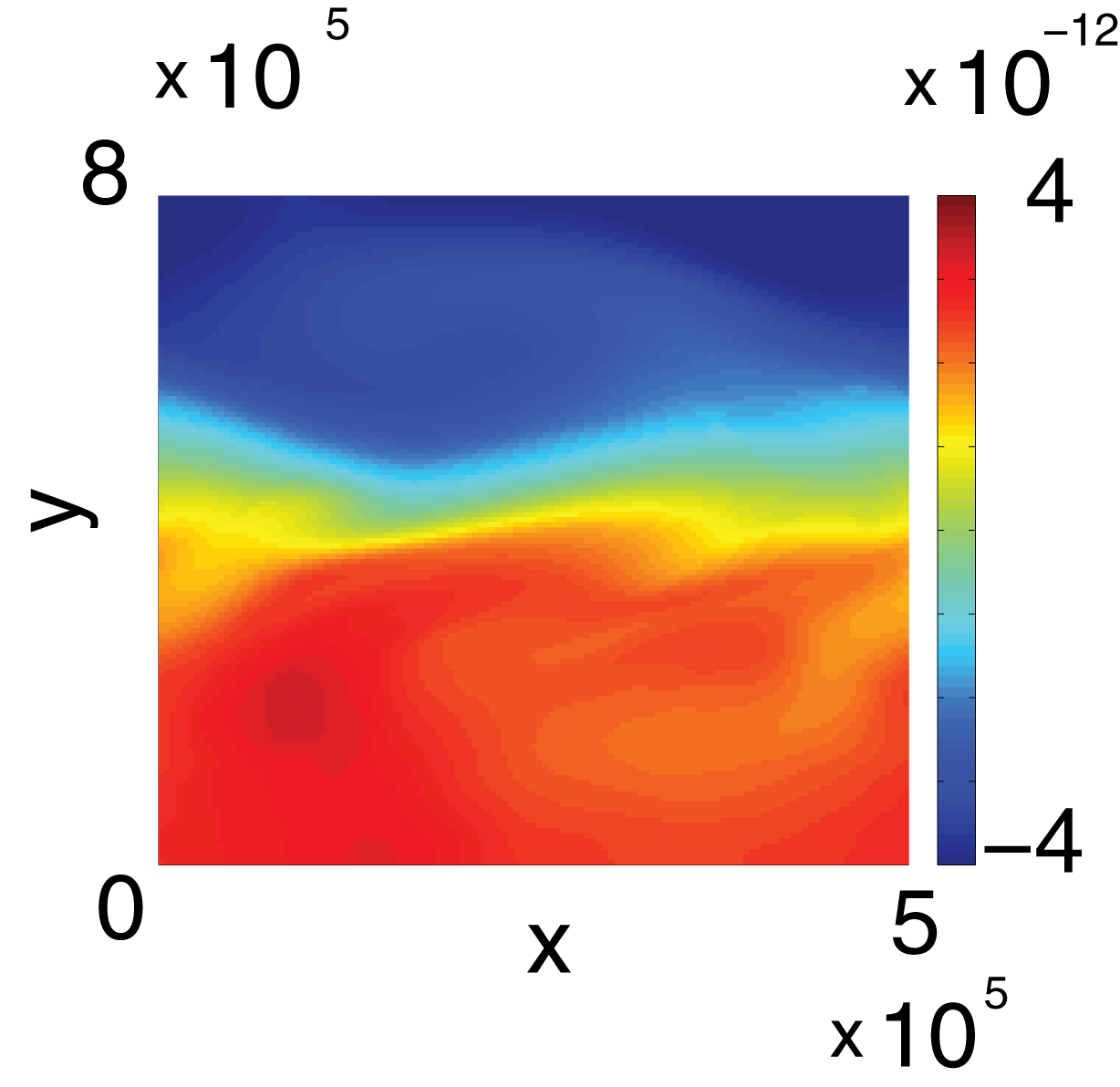}
         \includegraphics[width=0.32\textwidth]{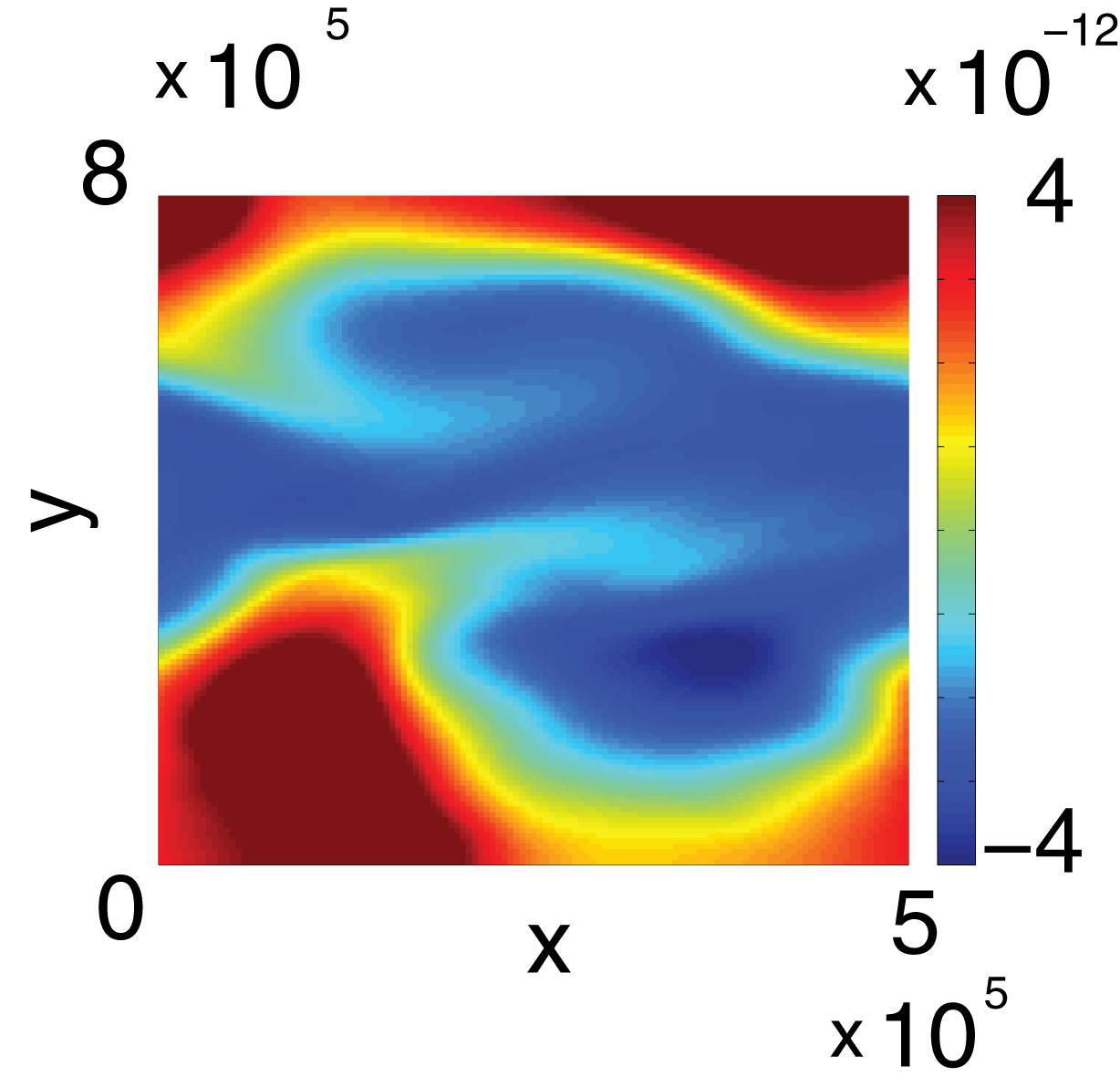}
         \includegraphics[width=0.32\textwidth]{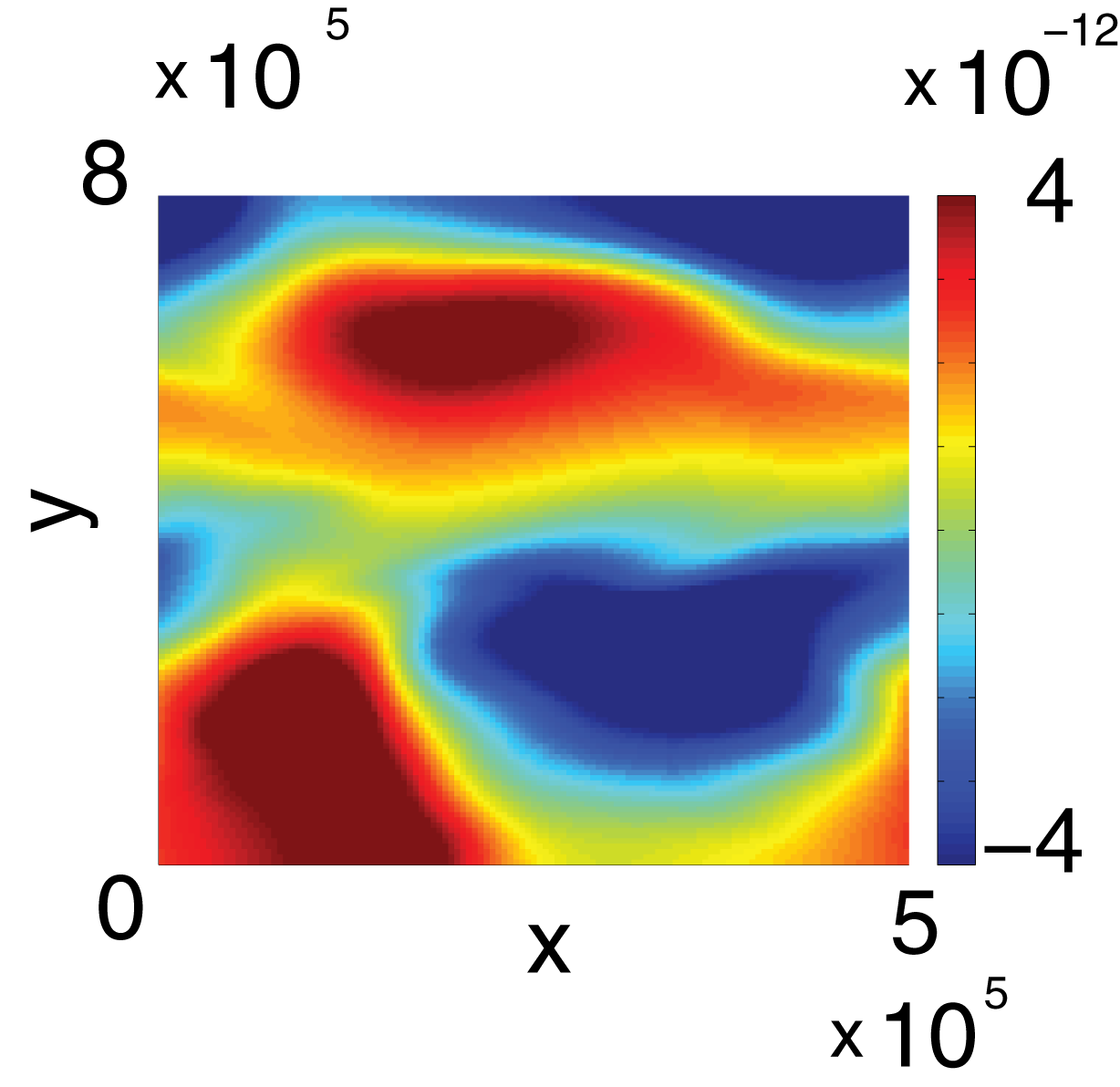}
         \label{fig:qg_ulam_ig}}
     \subfigure[Spectral collocation for the generator]{
         \includegraphics[width=0.32\textwidth]{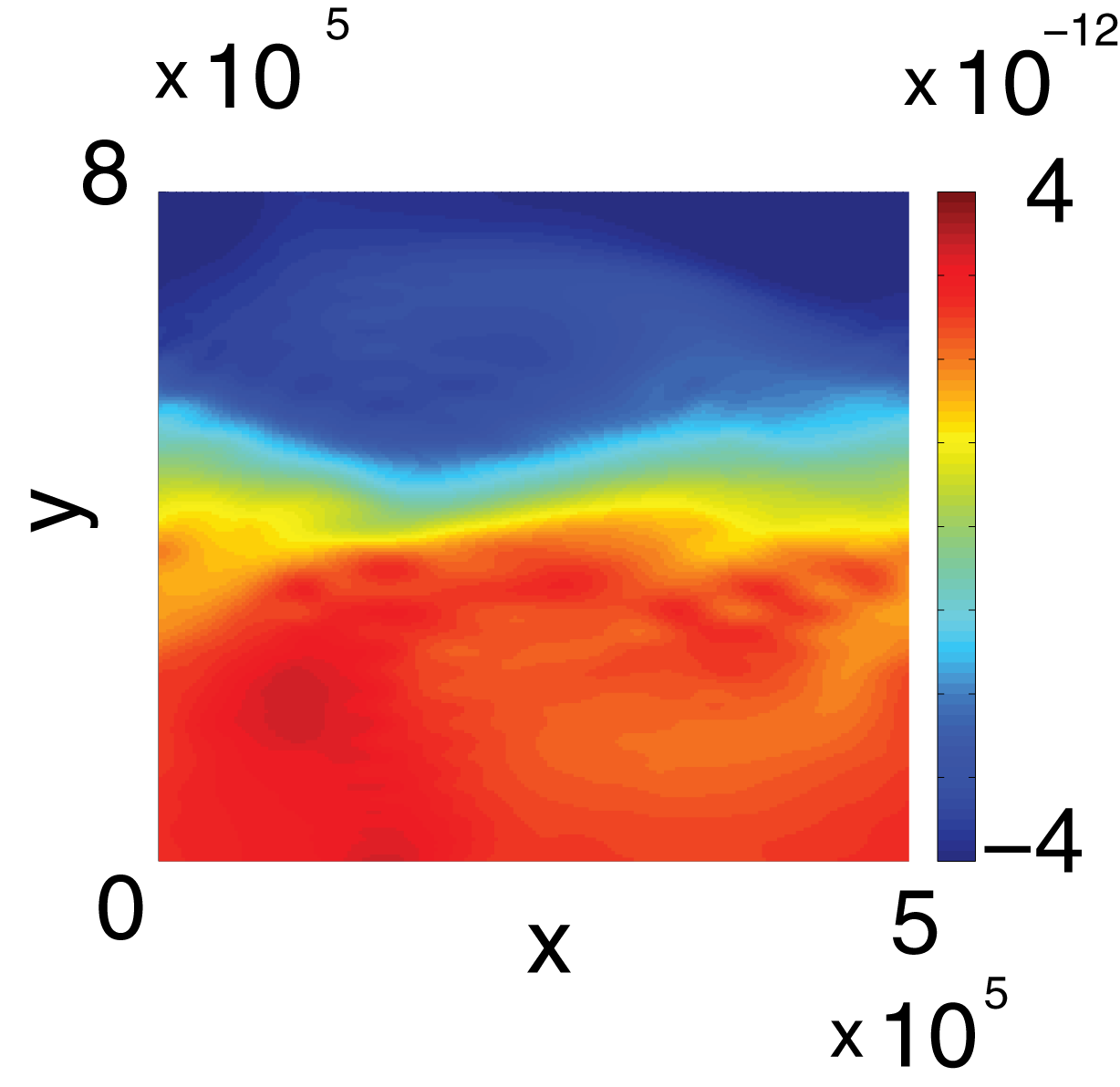}
         \includegraphics[width=0.32\textwidth]{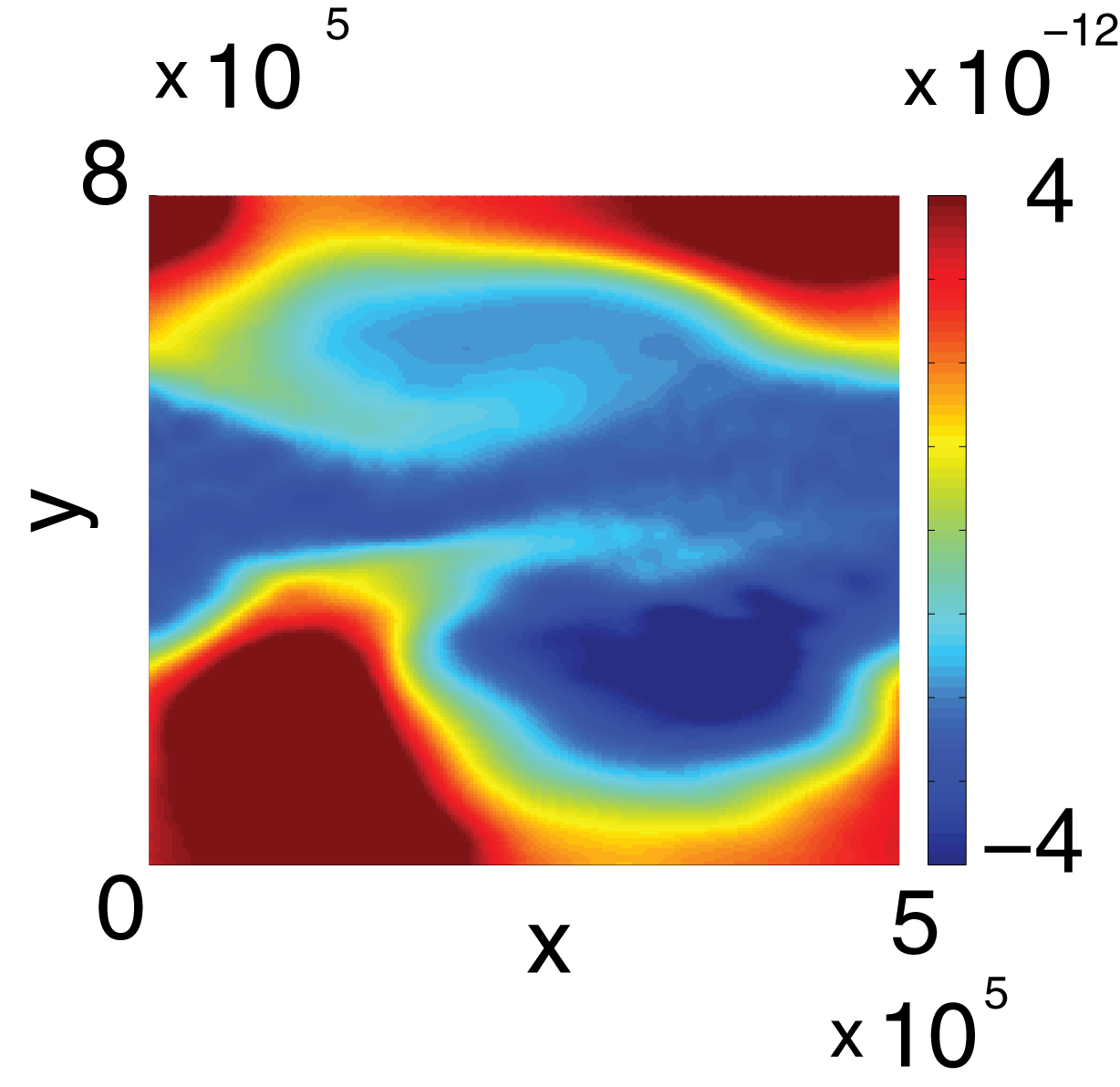}
         \includegraphics[width=0.32\textwidth]{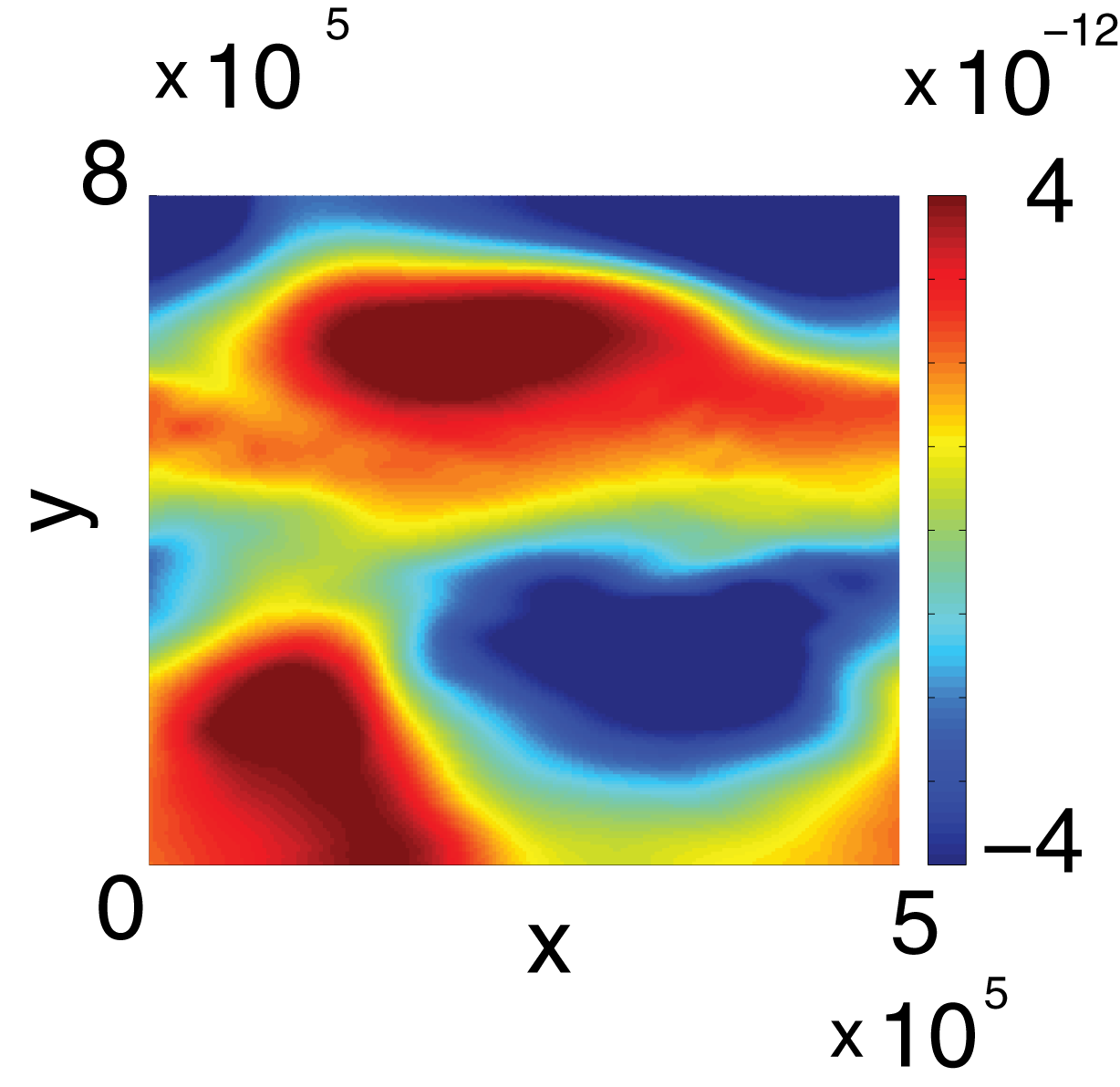}
         \label{fig:qg_spectral}}
     \caption{Quasi-geostrophic flow: Eigenvectors at the second, third and fourth eigenvalue (from left to right).}
     \label{fig:qg-evs}
\end{figure}
\begin{figure}[htb]
	\centering
   \includegraphics[width=0.42\textwidth]{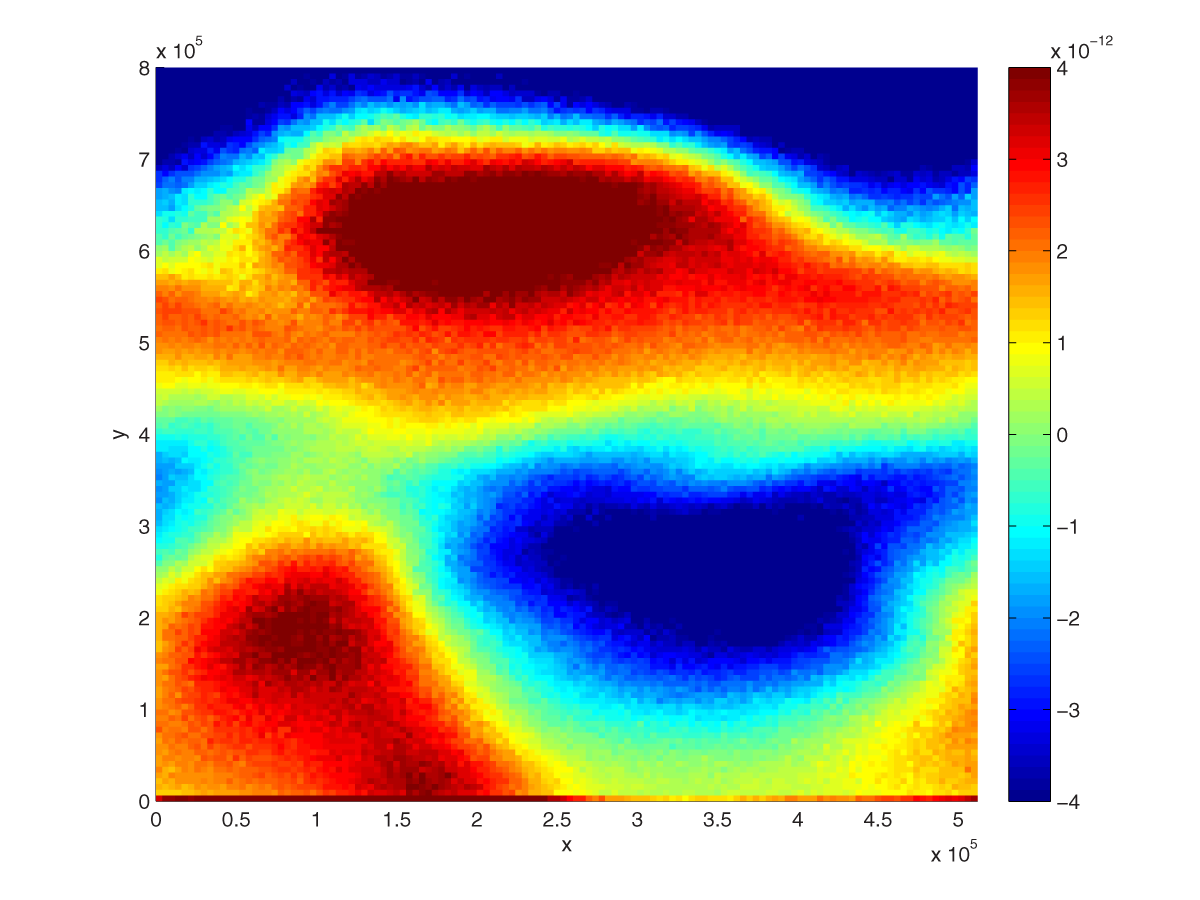}\qquad
	\includegraphics[width=0.42\textwidth]{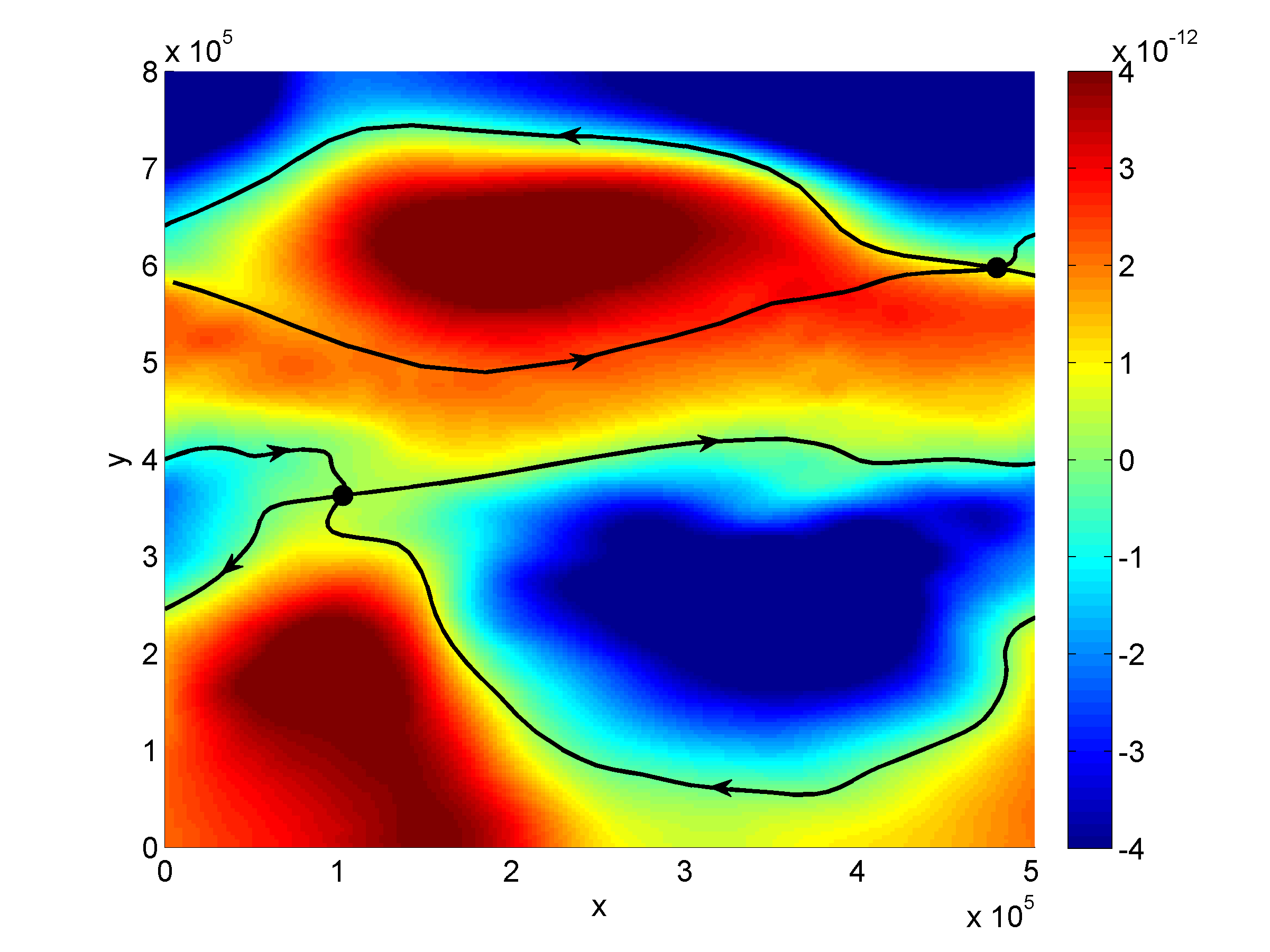}
	\caption{Comparison of the approximate eigenvector at $\lambda_4$ by Ulam's method (left) with the one computed by Ulam's method for the generator (right, together with the two saddle equilibria and their invariant manifolds).}
		\label{fig:qgseporbits}
\end{figure}
The regions enclosed by the stable/unstable manifolds of the two saddle equilibria shown in Figure \ref{fig:qg_vector_field} are obviously invariant for the noiseless flow. The introduction of noise means that it is possible (but with a rather low probability) for stochastic trajectories to jump between these regions, making these regions almost-invariant sets (cf.\ Section~\ref{subsec:almost}).  Figure \ref{fig:qg-evs} shows that these almost-invariant sets are highlighted in the 2nd, 3rd, and 4th eigenvectors as level sets at high gradient regions in the eigenvectors (in this example, near the yellow bands). The 2nd and 3rd eigenvectors highlight regions near different stable/unstable manifolds at the yellow bands and the 4th eigenvector shown in Figure~\ref{fig:qgseporbits} shows a combination. We refer the reader to \cite{DeJuKo05a,FrPa09a} for details on the connection between invariant manifolds and almost invariant sets.  As one sees in Figure~\ref{fig:qgseporbits}, the eigenvectors from Ulam's method for the generator are smoother compared to the ones from the standard Ulam method.  Ulam's method for the generator appears to be more accurate, at least for this system with a small amount of noise.  A possible explanation is that with Ulam's method for the generator we formally average the diffusion, while with the standard Ulam method we simulate a finite number of random trajectories, which incompletely sample the true noise distribution.

\subsection{A volume-preserving 3D example: the ABC-flow}		\label{ssec:ABCflow}

In this section we demonstrate the efficacy of our generator approach for a three-dimensional flow.
We consider the so-called ABC-flow \cite{Ar65a}, given by
\begin{eqnarray*}
 \dot{x} & = & a\sin(2\pi z)+c\cos(2\pi y) \\
 \dot{y} & = & b\sin(2\pi x)+a\cos(2\pi z) \\
 \dot{z} & = & c\sin(2\pi y)+b\cos(2\pi x),
\end{eqnarray*}
on the 3 dimensional torus. The flow is volume-preserving, so in order to make the computation of the spectrum well posed we need to add some artifical diffusion again. The vector field is very smooth, and in numerical experiments a diffusion coefficient $\varepsilon = 0.04$ turned out to work well. For $a=\sqrt{3}$, $b=\sqrt{2}$ and $c=1$, there is numerical evidence that the flow exhibits complicated dynamics and invariant sets of complicated geometry \cite{DoFrHe86a,
FrPa09a}.

We approximate the leading few eigenfunctions of $\A_\ep$ with Ulam's method for the generator on a $64\times 64\times 64$ grid.  We used Gauss quadrature with 16 nodes on each face of a box in
order to approximate the surface integrals, requiring $\sim 16\cdot 64^3 \approx 4\cdot 10^6$ evaluations of the vector field.  The $L^1$-error of the approximate invariant density is $\approx 10^{-12}$.
The second and fifth eigenfunctions are shown in Figure \ref{fig:ABC_U64_LEV2,5}. The regions highlighted in red and blue are invariant cylinders of the deterministic flow;  see \cite{FrPa09a} for more details on how to extract invariant and almost-invariant sets from the eigenfunctions.
\begin{figure}[ht]
 \centering
     \includegraphics[width=0.9\textwidth]{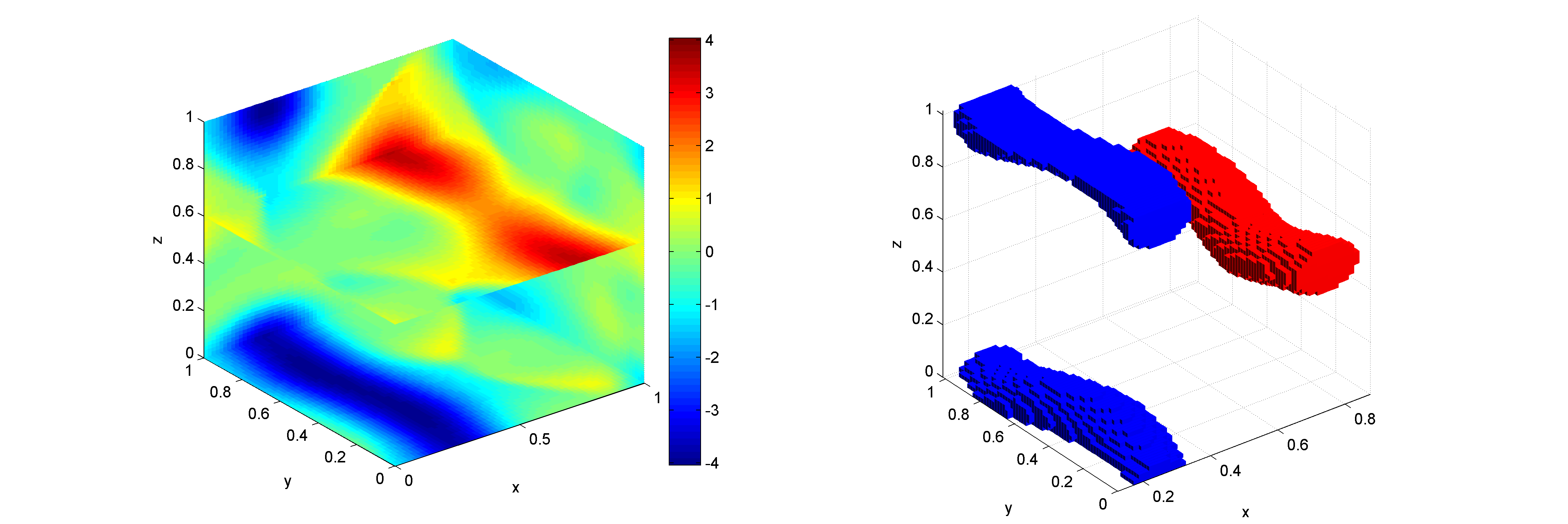}\\
     \includegraphics[width=0.9\textwidth]{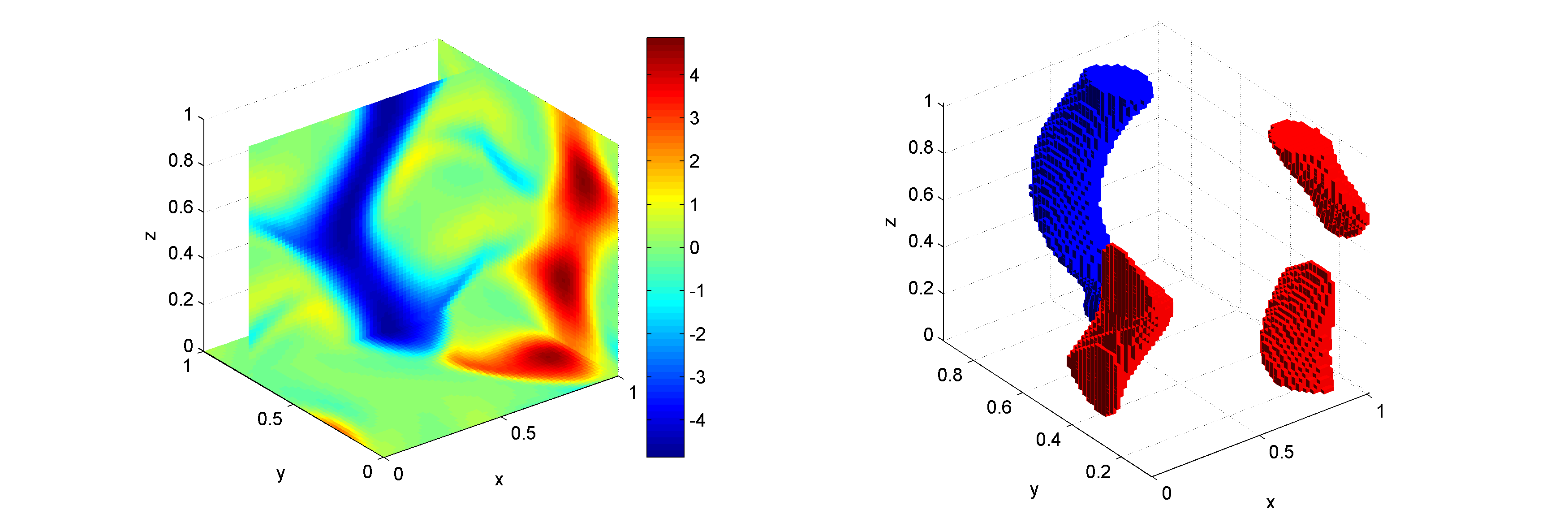}
 \caption{Second and fifth eigenvectors of the infinitesimal generator -- Ulam type discretization. Left: slices, right: regions, where the absolute value of the left eigenfunctions exceed a given threshold -- the ``core" of the almost invariant sets, representing invariant cylinders in this example.}
 \label{fig:ABC_U64_LEV2,5}
\end{figure}

We also apply spectral collocation on a $11\times11\times11$ grid, requiring $11^3 = 1331$ evaluations of the vector field.
The $L^1$-error of the approximate invariant density is
$\approx 10^{-14}$.  We observe fast convergence (after
comparing with computations on coarser resolutions) of not just the
first nontrivial eigenfunctions, but also of higher order ones.
\begin{figure}[h]
 \centering
     \includegraphics[width=0.9\textwidth]{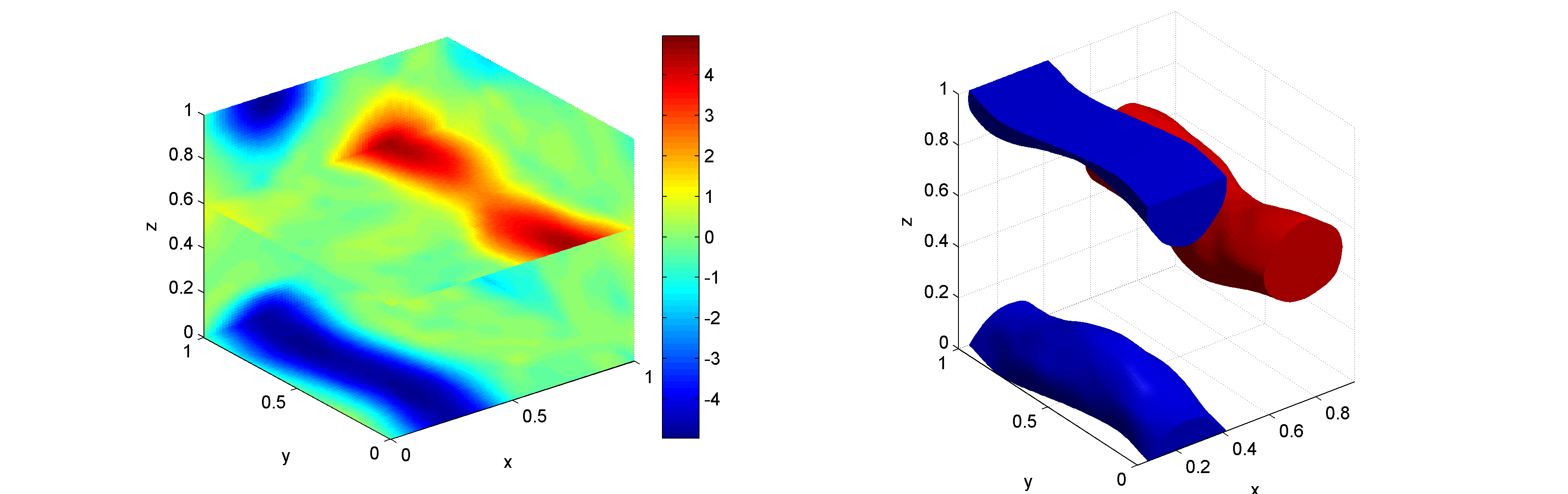}\\
     \includegraphics[width=0.9\textwidth]{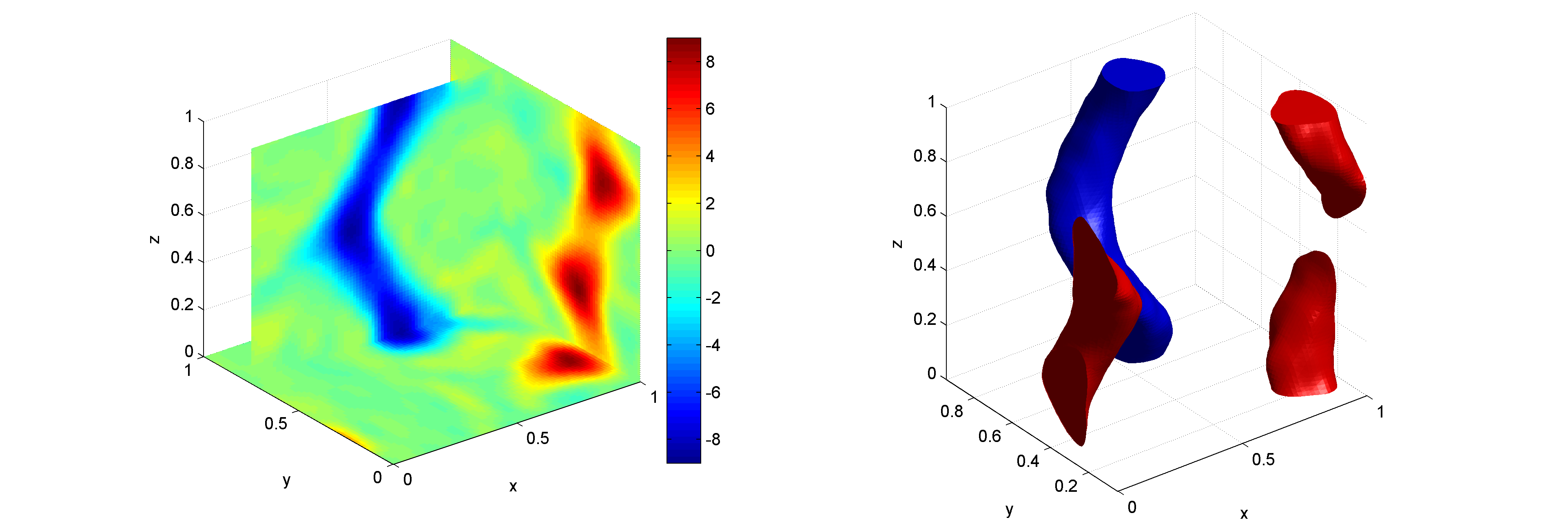}
 \caption{Second and fifth eigenfunction of the generator computed by spectral collocation. Left: slices, right: regions, where the absolute value of the left eigenfunctions exceed a given threshold -- the ``core" of the almost invariant sets.}
 \label{fig:ABC_res13_EV2,6}
\end{figure}

\subsection{A dissipative 3D example with complicated geometry: the Lorenz system}

As a last example we consider a system where the effective dynamics
is supported on a set of complicated geometry and fractional
dimension, the well-known Lorenz system
\begin{align*}
\dot x & =  \sigma(y-x), \\
\dot y & =  x(\varrho-z)-y, \\
\dot z & =  xy-\beta z,
\end{align*}
where we make the standard parameter choices $\sigma=10$, $\varrho=28$ and $\beta=8/3$. Since the effective (complicated) dynamics happens on a lower dimensional attractor we do not expect spectral collocation to work well and we therefore apply Ulam's method for the generator here.

A decade ago, numerical techniques were introduced for
computing box coverings for attractors of complicated structure, cf.
\cite{DeHo97a,DeFrJu01a}. These techniques make use of the fact
that the set $M$ to be computed is an attractor, hence each
trajectory starting in its vicinity will be pulled to $M$ in a
fairly short time. In our approach time is not considered, we only
use the vector field. Since the boundary of the box covering does
not have to coincide with the boundary of the attractor, a tight box
covering might not show the desired results, because of relatively big
outflow rates in boundary boxes. The simplest idea is to use a
big enough rectangle -- in our case $[-30,30]\times [-30,30]\times
[-10,70]$. However, this set is not invariant, hence the question
arises, which boundary conditions one should impose. Translating the
condition ``no probability flow on the boundary'' in this Ulam type
approach leads to the condition that the flow rates at the boundary
are ignored. The attractor is then extracted by simple thresholding:
regions where the approximate invariant density is nonzero are
expected to lie in a small neighbourhood of the attractor.  The
finer the resolution, the smaller the diffusion introduced by the
discretization and thus the tighter this neighbourhood of the
attractor is.  Having constructed the attractor, we may restrict the
other eigenfunctions to this set in order to obtain almost
invariant sets within the attractor.

For our computation, we employed a grid of $128\times 128\times 128$ boxes and used Gauss quadrature in $5 \times 5$ nodes on the faces in order to set up the approximate generator.  The attractor is extracted by thresholding the approximate invariant density $u$: We cut off at a threshold value $5\cdot 10^{-6}$ such that 96\% of the invariant measure is supported on $\{u_1 > c\}$.
Figure~\ref{fig:LorAttr128} shows the corresponding covering as well as the sign structure of the second and third eigenvector.

\begin{figure}[h]
 \centering
     \includegraphics[width=0.32\textwidth]{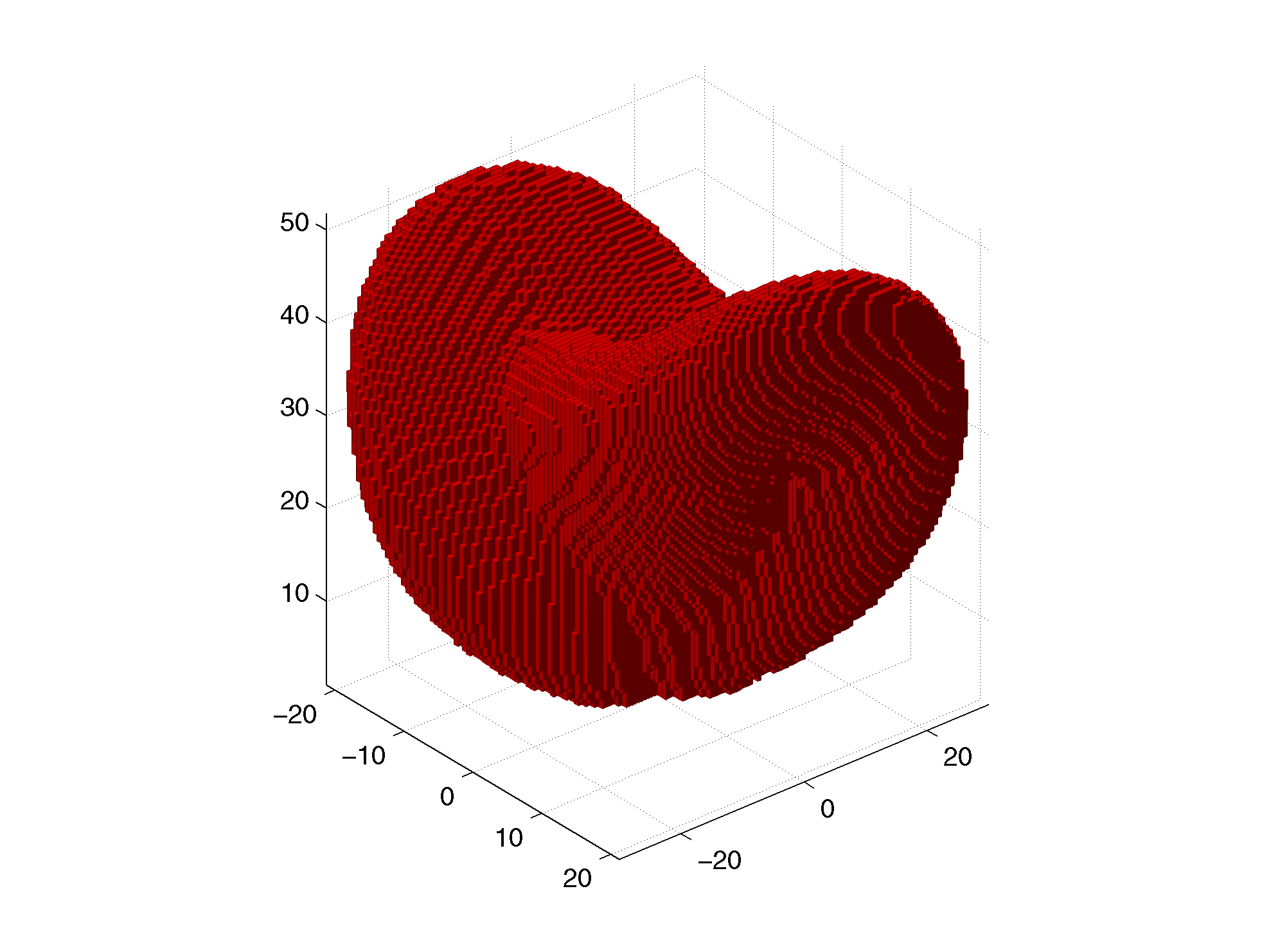}
     \includegraphics[width=0.32\textwidth]{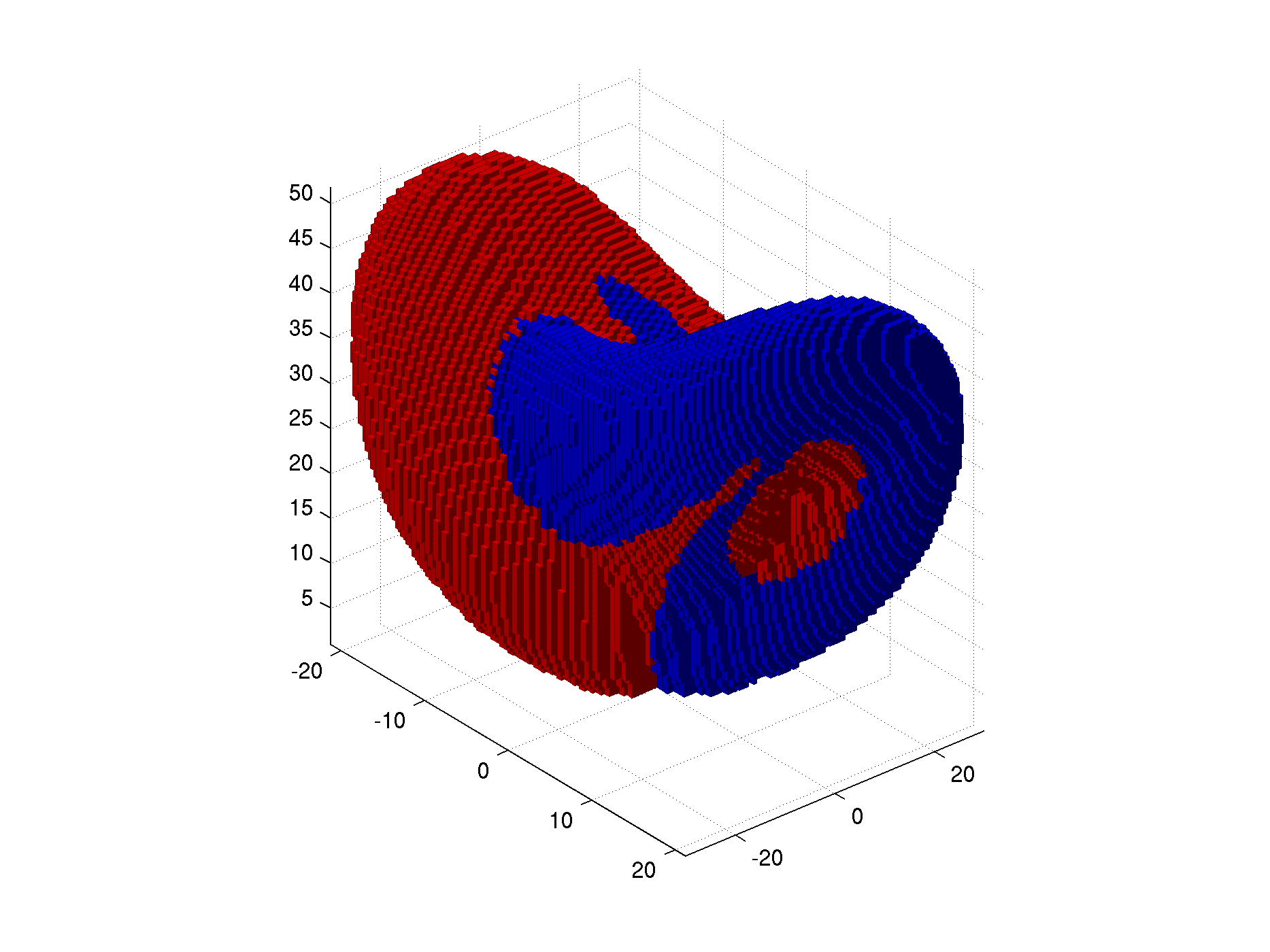}
     \includegraphics[width=0.32\textwidth]{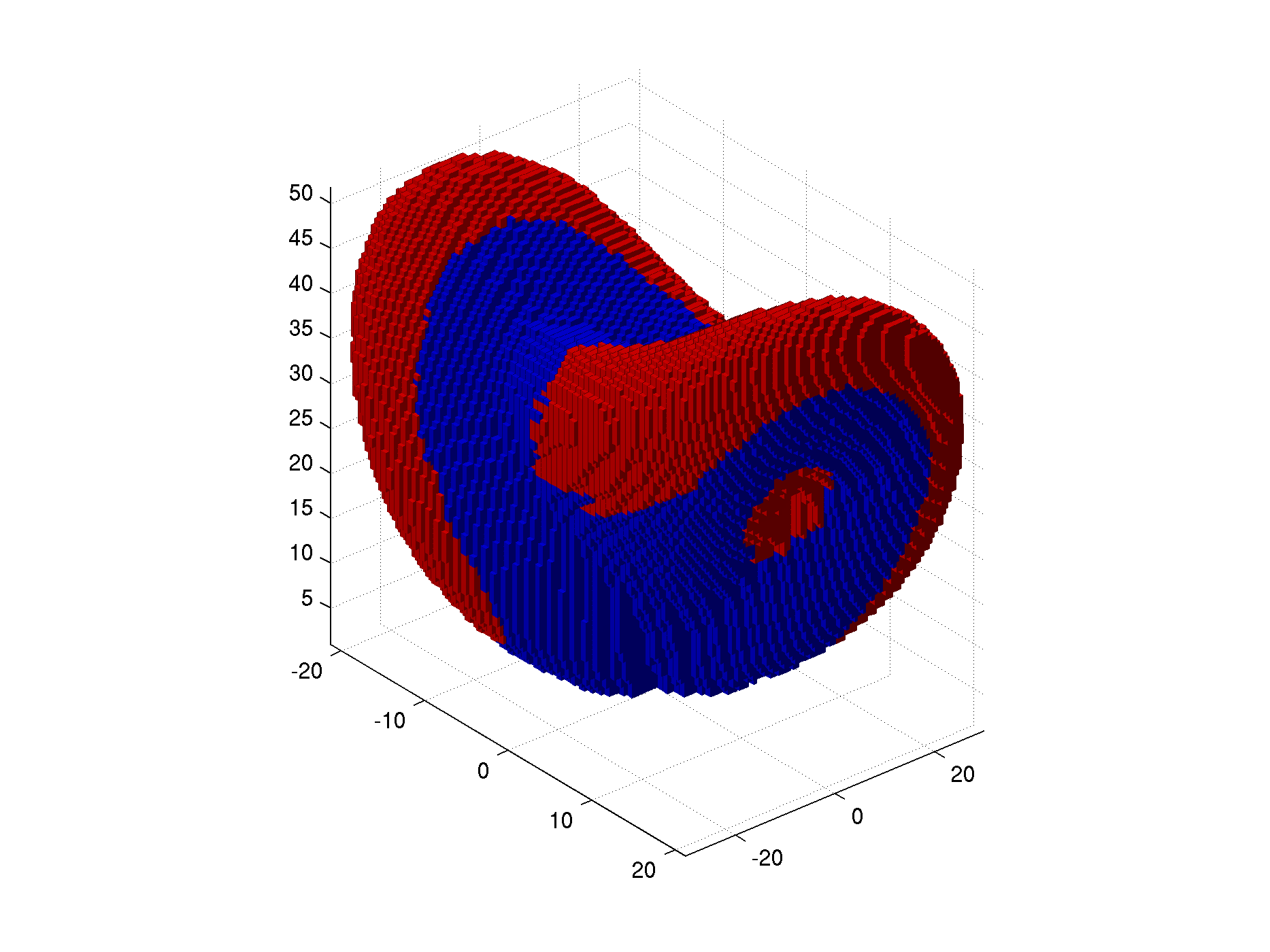}
     \caption{Ulam's method for the generator in an example with a low-dimensional attractor: The Lorenz attractor (left) and almost invariant sets: sign structure of the second eigenvector (middle) and third  eigenvector (right).}
 \label{fig:LorAttr128}
\end{figure}

\section{Conclusion}

We introduced the infinitesimal generator as a tool for studying long term behavior of flows without requiring any trajectory integration.
The (typically unit dimensional) null space of the generator is spanned by the invariant density of the flow;  the invariant density describes the asymptotic distribution of trajectories.
We showed that eigenfunctions of the generator corresponding to eigenvalues close to 0 had strong connections with almost-invariant or metastable behavior, and could be used to identify regions in the flow from which the escape rate is very low.

We proposed two new numerical approaches for the estimation of the generator; one based on Ulam's method and the other based on spectral collocation.
We tested our numerical approaches on ``full phase space'' flows in one, two, and three dimensions and found that both approaches significantly outperformed the standard Ulam approach based on the Perron-Frobenius operator.
Moreover, all operator/generator based methods were superior to standard trajectory integration for estimating the invariant density;  the identification of almost-invariant sets is essentially impossible without an operator/generator approach.
The spectral collocation approach worked particularly well in periodic domains when the vector field is infinitely smooth.

\section*{Acknowledgements}

We thank Yuri Latushkin for pointing us to useful books, and Nick Trefethen for careful proofreading and helpful questions and suggestions.

\bibliographystyle{siam}

\end{document}